\documentclass[12pt]{article}

\usepackage[left=2.5cm, right=2.5cm, top=2.5cm, bottom=2.5cm]{geometry}
\usepackage[utf8]{inputenc}
\usepackage[english]{babel}
\usepackage{amsthm}
\usepackage{amsmath}
\usepackage{amsfonts}
\usepackage{appendix}
\usepackage{enumerate}
\usepackage[normalem]{ulem}
\usepackage{mathtools}
\usepackage{stmaryrd}
\usepackage{cases}
\usepackage{xcolor}
\usepackage[noadjust]{cite}
\usepackage[shortlabels]{enumitem}
\usepackage[hidelinks]{hyperref}

\DeclareMathOperator{\rank}{rank}
\DeclareMathOperator{\diag}{diag}
\DeclareMathOperator{\Ran}{Ran}

\DeclarePairedDelimiter{\abs}{\lvert}{\rvert}
\DeclarePairedDelimiter{\norm}{\lVert}{\rVert}
\DeclarePairedDelimiter{\scalprod}{\langle}{\rangle}

\newcommand{\suchthat}{\ifnum\currentgrouptype=16 \mathrel{}\middle|\mathrel{}\else\mid\fi}

\newtheorem{theorem}{Theorem}[section]
\newtheorem{lemma}[theorem]{Lemma}
\newtheorem{proposition}[theorem]{Proposition}
\theoremstyle{definition}
\newtheorem{definition}[theorem]{Definition}
\newtheorem{remark}[theorem]{Remark}
\newtheorem*{hypothesis}{Hypothesis}

\begin{document}

\title{Exact controllability and rapid stabilization of linear difference delay equations and one-dimensional hyperbolic systems}

\makeatletter
{\def\@thefnmark{}\def\hyper@@anchor{}\@footnotetext{The first and third authors were funded by the French Agence Nationale de la Recherche (Grant ANR-26-CE40-0732-01). The third author was funded by the French Agence Nationale de la Recherche (Grant ANR-22-CPJ2-0138-01). The fourth author was partially supported by the French government under the France 2030 program, reference ANR-11-IDEX-0003 within the OI H-Code.}}
\makeatother

\author{
Yacine Chitour\thanks{Universit\'e Paris-Saclay, CNRS, CentraleSup\'elec, Laboratoire des signaux et syst\`emes, 91190 Gif-sur-Yvette, France (\url{yacine.chitour@centralesupelec.fr}).}
\and
S\'ebastien Fueyo\thanks{IMS Lab, Univ. Bordeaux, Bordeaux INP, CNRS (UMR 5218), 33405 Talence, France (\url{sebastien.fueyo@u-bordeaux.fr}).}
\and
Pierre Lissy\thanks{CERMICS, CNRS, ENPC, Institut Polytechnique de Paris, Marne-la-Vall\'ee, France (\url{pierre.lissy@enpc.fr}).}
\and
Guilherme Mazanti\thanks{Universit\'e Paris-Saclay, CNRS, CentraleSup\'elec, Inria, Laboratoire des signaux et syst\`emes, 91190 Gif-sur-Yvette, France (\url{guilherme.mazanti@inria.fr}).}~\thanks{Fédération de Mathématiques de CentraleSupélec, 91190, Gif-sur-Yvette, France.}
}

\date{}

\maketitle

\begin{abstract}
This paper addresses exact controllability and rapid stabilization issues for linear difference delay systems with finitely many pointwise delays. We first establish a criterion for exact controllability in $L^r$, $r \in [1, +\infty)$, characterized by a necessary and sufficient Hautus-type frequency-domain condition. The sufficiency of this condition in \(L^r\) was already known \cite{fueyo:hal-04830757}, whereas its necessity had previously been established only in \(L^1\) \cite{chitour:hal-03827918}. We answer \cite[Conjecture~3.8]{chitour:hal-03827918} by proving that this condition is also necessary for $r \in (1, +\infty)$. The analysis is based on rewriting the system as an evolution equation in a suitable Banach space, which allows one to apply duality arguments in the spirit of \cite{Russell_Weiss94}. Building on this controllability analysis and the abstract stabilization framework developed in \cite{Komornik, Vest2013Rapid}, we prove that our exact controllability criterion in \(L^r\) is also a sufficient condition for rapid stabilization in $L^2$, \textit{i.e.}, that the system of interest can be stabilized by a family of state feedback control laws ensuring that the closed-loop system is exponentially stable in \(L^2\) with an arbitrarily large decay rate. As an application, we extend our results to linear one-dimensional hyperbolic partial differential equations, which can be reduced to linear difference delay systems via the method of characteristics, obtaining also a necessary and sufficient frequency-domain criterion for exact controllability in $L^r$, $r\in [1, +\infty)$, which, as before, is a sufficient condition for rapid stabilization in $L^2$.
\end{abstract}

\bigskip

\noindent\textbf{Keywords:} difference delay systems, exact controllability, exponential stabilization, rapid stabilization, spectral criteria, infinite-dimensional systems, hyperbolic partial differential equations.

\medskip

\noindent\textbf{2020 Mathematics Subject Classification:} 39A06, 93B05, 39A30, 93C43, 93D15, 35L40.

\tableofcontents

\section{Introduction}

The present paper deals with the exact controllability and stabilization of linear difference delay equations of the form
\begin{equation}
\label{main_system}
\left\{
\begin{aligned}
x(t) & = \sum_{j=1}^N A_j x(t-\Lambda_j) + Bu(t), & \quad & t \ge 0, \\
x(\theta) & = x_0(\theta), & & \theta \in [-\Lambda_N,0),
\end{aligned}
\right.
\end{equation}
where, given positive integers $d$, $m$, and $N$, one has that $A_1,\dotsc,A_N$ are fixed $d\times d$ matrices with complex entries\footnote{Even though we work in this paper with complex-valued matrices and vectors, the main results of the paper remain true in the real-valued setting (see Remark~\ref{remk:Cd-Rd}).}, $B$ is a $d\times m$ matrix with complex entries, the state $x(t)$ and the control $u(t)$ at a given time \(t\) belong to $\mathbb{C}^d$ and $\mathbb{C}^m$, respectively, the delays $\Lambda_1, \dots, \Lambda_N$ are positive real numbers satisfying $\Lambda_1< \dots<\Lambda_N$, and the initial condition \(x_0\) is a given function defined on \([-\Lambda_N, 0)\) and taking values in \(\mathbb C^d\).

System~\eqref{main_system} can be seen as an infinite-dimensional control system on the state space $\mathsf X^r=L^r((-\Lambda_N, 0),\allowbreak \mathbb{C}^d)$ with control functions in $\mathsf U^r=L^r_{\rm loc}(\mathbb{R}_+, \mathbb{C}^m)$, where $r\in [1, +\infty)$. Indeed, given \(x_0 \in \mathsf X^r\) and \(u \in \mathsf U^r\), a solution \(x\) of \eqref{main_system} can be seen as a curve $t \mapsto x_t \in \mathsf X^r$, where \(x_t\) is the \emph{history function} associated with \(x\), defined for \(t \geq 0\) by $x_t(\theta) = x(t + \theta)$ for $\theta\in (-\Lambda_N, 0)$.

One of the aims of this paper is to study the \emph{$L^r$-exact controllability} problem for \eqref{main_system}, which consists in determining whether, for every $x_{\text{ini}}, x_{\text{fin}} \in \mathsf X^r$, there exists \(T > 0\) and a control function $u \in L^r((0, T), \mathbb{C}^m)$ such that the trajectory of System~\eqref{main_system} with initial condition $x_{\text{ini}}$ at $t=0$ and control \(u\) reaches $x_{\text{fin}}$ at time $T$. When the time $T$ does not depend on $x_{\text{ini}}, x_{\text{fin}}$, one speaks about $L^r$-exact controllability \emph{in time $T$} (see Definition~\ref{def:controllability} below).

\subsection{Hautus-type condition for \texorpdfstring{$L^r$}{Lr}-exact controllability}

Establishing Hautus-type frequency-domain criteria for controllability has played a central role in control theory since the seminal work \cite{hautus1969controllability} for ordinary differential equations. For delay systems such as System~\eqref{main_system}, an appropriate framework for studying controllability is provided in  \cite{yamamoto1989reachability}, where controllability criteria are established  for a broad class of systems, called \textit{pseudo-rational systems}, that naturally includes delay systems.

On the one hand, regarding approximate controllability, that is, the property that every target can be approached arbitrarily closely by suitable controls, the Hautus-type conditions derived in \cite{yamamoto1989reachability} apply to pseudo-rational systems and different classes of delay systems, and explicit \(L^r\)-approximate controllability criteria for \eqref{main_system} using these conditions were derived in \cite{chitour:hal-03827918}.
These results were later extended to exact controllability with targets in distributional spaces for certain classes of delay systems in \cite{Yamamoto_Willems}, but not for System~\eqref{main_system}. On the other hand, the recent works \cite{chitour:hal-03827918,Corona_fueyo,fueyo:hal-04830757} not only provided upper bounds on the minimal controllability time but also established necessary and sufficient Hautus-type criteria for the exact controllability of System~\eqref{main_system} in the $L^1$ setting, while only sufficient conditions were obtained in the $L^r$ framework, $r \in (1, +\infty)$, leaving open whether these conditions are also necessary. 

In the present paper,  we first prove that the Hautus-type conditions are also necessary in the $L^r$ setting (see Theorem~\ref{th:iff_controllability}). The proof relies on recasting System~\eqref{main_system} into an abstract control framework
in the spirit of~\cite{Tucsnak_Weiss}, and then applying the necessary
Hautus-type criterion for exact observability established in~\cite{Russell_Weiss94}, together with the standard
controllability--observability duality. Moreover, the rank conditions appearing in Theorem~\ref{th:iff_controllability} are independent of the exponent $r\in [1, +\infty)$. As a consequence, controllability in one space $L^r$ implies controllability in every other space $L^{\widetilde r}$, with $\widetilde r\in [1, +\infty)$, showing that controllability is in fact equivalent throughout the whole scale of $L^r$ spaces with finite \(r\).

\subsection{Rapid stabilization}

The stabilization problem of \eqref{main_system} (with a linear feedback) consists in designing a control $u$, depending linearly on the state $x$, such that the origin of the corresponding closed-loop system obtained from System~\eqref{main_system} is exponentially stable. One refers to rapid stabilization when one requires an arbitrarily large rate of exponential convergence in the stabilization problem.

Over the years, the stabilization of \eqref{main_system} has attracted considerable attention, although it has often been studied as an intermediate step toward the analysis of neutral delay differential systems. One may notably quote \cite{salamon1984control}, which established a necessary Hautus-type condition for stabilization in the general case, and additionally proved a sufficient condition when the delays are commensurate. In the latter case, the stabilization problem reduces to that of a single-delay difference equation and can be addressed through algebraic techniques. Another important contribution can be found in \cite{hale2002stabilization}, which derived a stabilization criterion for \eqref{main_system} valid for arbitrary sets of delays. However, their approach is non-constructive and therefore cannot be used to obtain a stabilization result for a prescribed set of delays.

The present paper establishes a sufficient spectral criterion for the rapid stabilization of \eqref{main_system}. Recall that, for linear time-invariant finite-dimensional control systems, controllability implies rapid stabilization by linear feedback laws. We  extend this result to \eqref{main_system}  by proving that our \(L^r\) exact controllability criterion also implies \(L^2\) rapid stabilization. The argument combines the abstract differential framework given in Section~\ref{sec:representation} and the rapid stabilization result in Hilbert spaces proved in \cite{Komornik, Vest2013Rapid}.

\subsection{One-dimensional hyperbolic systems}

Difference delay systems naturally arise in the modeling of physical phenomena involving transport effects, such as high-frequency electronic amplifiers \cite{Suarez}. In many cases, these phenomena are described by one-dimensional hyperbolic partial differential equations (PDEs). In this setting, the method of characteristics allows one to reduce the problem to the study of a subclass of difference delay systems of the form~\eqref{main_system}, a classical technique widely used in the literature to study well-posedness, qualitative behavior, stability, controllability, and stabilizability properties of hyperbolic PDEs \cite{auriol2019explicit, baratchart, bastin2016stability, Brayton1966Bifurcation, Chitour2020Approximate, Chitour2016Stability, Cooke1968Differential, CoNg, Greenberg1984Effect, Slemrod1971Nonexistence, coron}. Results obtained for delay systems are therefore widely used in the analysis of one-dimensional hyperbolic PDEs.

The analysis of controllability notions for one-dimensional first-order hyperbolic systems dates back at least to the seminal paper~\cite{Russell1978Controllability} and has been extensively studied since then, with many works, such as \cite{CoronOptimal, Coron2019Optimal, Coron2021Null, Gantouh2020ApproximateCO, Engel2008Vertex, CHITOUR2024_contr_hyp, Hu2024Minimal, Hu2025Method, Weck1982Remark}, dealing with characterizations of several forms of controllability (exact, approximate, or null controllability) and the search for a minimal time for attaining such control objectives. We refer the interested reader to the recent survey~\cite{HuOliveSurvey}.

Stabilization issues for linear one-dimensional hyperbolic systems have also been the subject of several works with typically two complementary approaches. The first one, based on the backstepping method, consists in designing a feedback law after transforming the system into a simpler and more tractable target system \cite{vazquez2011backstepping, krstic2008backstepping, coron2020finite, coron2017finite}. The second approach relies on the equivalence between delay systems and hyperbolic PDEs, and uses frequency-domain stability results for System~\eqref{main_system} to derive stabilization properties for certain PDEs \cite{bastin2016stability, auriol2019explicit}, sometimes in combination with backstepping techniques. 

From this perspective, we aim to exploit the $L^r$ exact controllability result and the $L^2$ rapid stabilization criterion established for System~\eqref{main_system} for these hyperbolic systems, which are modeled as a network of pure transport equations on segments of length one
where the coupling and the actuation occur linearly and at the nodes. Relying on the method of characteristics, it is classical to show that such a system of hyperbolic equations can be cast as a particular instance of System~\eqref{main_system}. However, one cannot expect controllability in $\mathsf X^r$ spaces, but instead in a subspace $\mathsf S^r$, $r\in [1, +\infty]$, presented in Section~\ref{sec:PDE}, as remarked in \cite{CHITOUR2024_contr_hyp}. 
Controllability in \(\mathsf X^r\) is a stronger requirement and leads to more restrictive rank conditions. We must therefore adapt our framework to \(\mathsf S^r\), as detailed in Section~\ref{sec:PDE}. Since the strategy of proofs is identical to the one developed for the $\mathsf X^r$ spaces, we only provide the final computations with no further details. That yields a necessary and sufficient frequency-domain controllability condition for $L^r$ exact controllability in finite time, together with an upper bound on the minimal controllability time, as well as a sufficient spectral criterion for \(L^2\) rapid stabilization in this framework.

\subsection{Plan of the paper}
In Section~\ref{sec:representation}, we recall some previous results on System~\eqref{main_system} before reformulating it as an abstract evolution equation in a Banach space, using a framework in the spirit of \cite{Tucsnak_Weiss}. Section~\ref{sec:controllability} is devoted to exact controllability and presents our main result on System~\eqref{main_system}, Theorem~\ref{th:iff_controllability}, with a necessary and sufficient frequency-domain criterion for exact controllability. In Section~\ref{sec:stabilization}, a sufficient spectral criterion for rapid stabilization of the delay system is proved, using the abstract formulation and following the strategy devised in \cite{Komornik, Vest2013Rapid}. Finally, Section~\ref{sec:PDE} is devoted to adapting the previous results for one-dimensional hyperbolic systems. 

\subsection{Notation}

In this paper, we consider that the vector space \(\mathbb C^d\) for some positive integer \(d\) is endowed with its standard Hermitian inner product $\langle \cdot,\cdot \rangle_{\mathbb{C}^d}$ and we denote by $\|\cdot\|_{\mathbb{C}^d}$ the associated Hermitian norm. Given two positive integers \(d, m\), the set of \(d \times m\) matrices with complex coefficients is denoted by \(\mathbb C^{d \times m}\), and this space is endowed with the operator norm \(\norm{\cdot}_{\mathbb C^{d \times m}}\) induced by the Hermitian norms \(\norm{\cdot}_{\mathbb C^m}\) and \(\norm{\cdot}_{\mathbb C^d}\). For a given matrix \(M \in \mathbb C^{d \times m}\), we denote by \(M^T\) its transpose matrix and by \(M^\ast\) its conjugate transpose matrix. Whenever needed, vectors in $\mathbb{C}^d$ are regarded as column vectors, \emph{i.e.}, we identify \(\mathbb C^d\) with \(\mathbb C^{d \times 1}\). Given positive integers \(d, m_1, m_2\) and two matrices \(M \in \mathbb C^{d \times m_1}\) and \(B \in \mathbb C^{d \times m_2}\), we denote by \([M, B] \in \mathbb C^{d \times (m_1 + m_2)}\) the matrix obtained by placing side by side the columns of \(M\) and \(B\).

The norm in a Banach space \(\mathsf X\) is denoted by \(\norm{\cdot}_{\mathsf X}\) and, when \(\mathsf X\) is a space of the form \(L^r((a, b), \mathbb C^d)\) for some \(r \in [1, +\infty]\), \(a, b \in \mathbb R\), and positive integer \(d\) and the values of \(a, b, d\) are clear from the context, we denote the norm simply by \(\norm{\cdot}_{L^r}\). The identity operator in a Banach space \(\mathsf X\) is denoted by \(I_{\mathsf X}\), or simply by \(I\) when \(\mathsf X\) is clear from the context. When \(\mathsf X = \mathbb C^d\), we denote \(I_{\mathbb C^d}\) simply by \(I_d\), and we identify it with the identity matrix whenever needed. Given two Banach spaces \(\mathsf X\) and \(\mathsf Y\), we denote by \(\mathcal L(\mathsf X, \mathsf Y)\) the Banach space of bounded linear operators from \(\mathsf X\) to \(\mathsf Y\), endowed with its standard operator norm. For simplicity, we set \(\mathcal L(\mathsf X) = \mathcal L(\mathsf X, \mathsf X)\).

We shall work in this paper with \(C^0\)-semigroups in Banach spaces, and we refer to the standard references \cite{Engel2001, Pazy1983Semigroups} for the classical theory of \(C^0\)-semigroups. We shall say that a semigroup \((\Upsilon(t))_{t \geq 0}\) in a Banach space \(\mathsf X\) is exponentially stable if there exist constants \(C \geq 1\) and \(\gamma > 0\) such that \(\norm{\Upsilon(t)}_{\mathcal L(\mathsf X)} \leq C e^{-\gamma t}\) for every \(t \geq 0\).

Given a Banach space \(\mathsf X\), its dual space \(\mathcal L(\mathsf X, \mathbb C)\) is denoted by \(\mathsf X^\prime\), and we write \(\scalprod{\cdot, \cdot}_{\mathsf X^\prime \times \mathsf X}\) for the dual pairing in \(\mathsf X^\prime \times \mathsf X\), the index \(\mathsf X^\prime \times \mathsf X\) being omitted when clear from the context. For \(r \in [1, +\infty)\), we always identify the dual of \(L^r((a, b), \mathbb C^d)\) with \(L^{r^\prime}((a, b), \mathbb C^d)\), where \(r^\prime \in (1, +\infty]\) is the conjugate exponent of \(r\), satisfying \(\frac{1}{r} + \frac{1}{r^\prime} = 1\). For simplicity, we denote the duality pairing on these spaces by $\langle \cdot,\cdot \rangle_{L^r}$, namely,
\[
\langle x,y \rangle_{L^r} = \int_{a}^b \langle x(\theta), y(\theta) \rangle_{\mathbb{C}^d} \, d\theta, \quad x \in L^{r}((a, b), \mathbb C^d), \; y \in L^{r^\prime}((a, b), \mathbb C^d).
\]

Given an interval \(I \subset \mathbb R\), we denote by \(C_c^\infty(I, \mathbb C^d)\) the set of functions of class \(C^\infty\) defined over \(I\) and taking values in \(\mathbb C^d\) whose support is a compact subset of \(I\). When \(I\) is open, we assume that elements of \(C_c^\infty(I, \mathbb C^d)\) are extended by \(0\) to elements of \(C_c^\infty(\mathbb R, \mathbb C^d)\) whenever needed.

\section{Abstract representation as a linear control system}
\label{sec:representation}

The main goal of this section is to rewrite \eqref{main_system} in \(\mathsf X^r\) in the form of an abstract linear control system
\begin{equation}
\label{eq:abstract}
\left\{
\begin{aligned}
z^\prime(t) & = F_r z(t) + G_r u(t), \\
z(0) & = z_0,
\end{aligned}
\right.
\end{equation}
where \(F_r\) is the generator of a \(C^0\)-semigroup on a suitable functional space, \(G_r\) is the control operator, and the state \(z\) at a given time \(t\) is in correspondence with the history function \(x_t\) of \eqref{main_system}. In the sequel, we shall consider \emph{mild} solutions for \eqref{eq:abstract} as defined below in Section~\ref{sec:abstract}. 

\subsection{Previous results}

To relate \eqref{main_system} with a system of the form \eqref{eq:abstract}, we first recall some previously obtained results for \eqref{main_system}. The following statement gathers the results presented in \cite[Propositions~2.2 and 2.7 and Remark~2.3]{Mazanti2017Relative} (see also \cite[Propositions~2.2 and 2.4, Remark~2.5, and Equation~(2.5)]{Chitour2020Approximate}). A general realization theory for infinite-dimensional linear input/output systems was developed by Y.~Yamamoto~\cite{yamamoto1981realization}, with a realization framework for pseudo-rational systems presented in \cite{YamamotoRealization}. For the difference delay equations considered here, however, the state-space representation can be constructed directly from the solution operators on the history space, without invoking this general theory.

\begin{proposition}
\label{prop:exist-unique-formula}
Let \(r \in [1, +\infty]\), \(x_0 \in \mathsf X^r\), and \(u \in \mathsf U^r\). Then \eqref{main_system} admits a unique solution with initial condition \(x_0\) and control \(u\), \emph{i.e.}, there exists a unique function \(x\in L^r_{\mathrm{loc}}((-\Lambda_N, +\infty), \mathbb C^d)\) satisfying \eqref{main_system} almost everywhere on \((-\Lambda_N, +\infty)\).

In addition, there exist families of operators \((\Upsilon_r(t))_{t \geq 0}\) and \((E_r(t))_{t \geq 0}\) in the spaces \(\mathcal L(\mathsf X^r)\) and \(\mathcal L(L^r((0, t), \mathbb C^m), \mathsf X^r)\), respectively, such that \((\Upsilon_r(t))_{t \geq 0}\) is a semigroup and, for every \(x_0 \in \mathsf X^r\) and \(u \in \mathsf U^r\), the history function \(t \mapsto x_t\) associated with the unique solution \(x\) of \eqref{main_system} satisfies
\begin{equation}
\label{eq:solution}
x_t = \Upsilon_r(t) x_0 + E_r(t) u_{\rvert [0, t]}, \qquad t \geq 0.
\end{equation}

Moreover, for \(t \in [0, \Lambda_1)\), we have, for almost every \(\theta \in (-\Lambda_N, 0)\),
\begin{subequations}
\label{eq:Upsilon-E}
\begin{align}
\label{eq:Upsilon}
(\Upsilon_r(t) x_0)(\theta) & = \begin{dcases*}
\sum_{j = 1}^N A_j x_0(t + \theta - \Lambda_j) & if \(0 \leq t + \theta < \Lambda_1\), \\
x_0(t + \theta) & if \(t + \theta < 0\),
\end{dcases*} \\
\label{eq:E}
(E_r(t) u)(\theta) & = \begin{dcases*}
B u(t + \theta) & if \(0 \leq t + \theta < \Lambda_1\), \\
0 & if \(t + \theta < 0\),
\end{dcases*}
\end{align}
\end{subequations}
and \((\Upsilon_r(t))_{t \geq 0}\) is a \(C^0\)-semigroup if \(r \in [1, +\infty)\).
\end{proposition}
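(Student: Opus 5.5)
The plan is to solve \eqref{main_system} by the method of steps on time intervals of length \(\Lambda_1\), and then to read off \(\Upsilon_r\), \(E_r\) and their properties from the explicit construction.

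\emph{Existence and uniqueness.} Extend \(x_0\) to \((-\Lambda_N,0)\) and define \(x\) on \([0,\Lambda_1)\) by
\[
x(t)=\sum_{j=1}^N A_j x_0(t-\Lambda_j)+Bu(t).
\]
This is well defined almost everywhere, since \(t-\Lambda_j\in[-\Lambda_N,0)\) for \(t\in[0,\Lambda_1)\). It lies in \(L^r\), because translation preserves \(L^r\) norms, and \(u\in L^r_{\mathrm{loc}}\). We then proceed inductively: once \(x\) is known on \([-\Lambda_N,k\Lambda_1)\), the right-hand side on \([k\Lambda_1,(k+1)\Lambda_1)\) involves only values at times \(t-\Lambda_j<k\Lambda_1\). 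This produces \(x\in L^r_{\mathrm{loc}}((-\Lambda_N,+\infty),\mathbb C^d)\) satisfying the equation almost everywhere. Uniqueness follows from the same induction: the difference of two solutions has zero initial datum and zero control, so it vanishes on each successive interval.

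\emph{Operators and the semigroup property.} By linearity of the construction, \(x_t\) depends linearly on \((x_0,u_{|[0,t]})\). We set \(\Upsilon_r(t)x_0:=x_t\) with \(u=0\), and \(E_r(t)u:=x_t\) with \(x_0=0\); formula \eqref{eq:solution} is then immediate. Boundedness follows from the first-step estimate
\[
\|x_t\|_{L^r}\le C\bigl(\|x_0\|_{L^r}+\|u\|_{L^r(0,t)}\bigr),\qquad t\le\Lambda_1,
\]
with \(C\) depending on \(\sum_j\|A_j\|\), \(\|B\|\) and \(r\), iterated a finite number of times. The semigroup property \(\Upsilon_r(t+s)=\Upsilon_r(t)\Upsilon_r(s)\) comes from time invariance and uniqueness. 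Indeed, \(y(\tau)=x(s+\tau)\) solves \eqref{main_system} with initial history \(x_s\) and zero control, so uniqueness gives \(y_t=\Upsilon_r(t)x_s\). Formulas \eqref{eq:Upsilon}–\eqref{eq:E} are just the first step written out: for \(t<\Lambda_1\) and \(\theta\in(-\Lambda_N,0)\), either \(t+\theta<0\), in which case \(x_t(\theta)=x_0(t+\theta)\), or \(0\le t+\theta<\Lambda_1\), in which case the defining formula applies.

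\emph{Strong continuity for \(r<\infty\).} This is the step needing the most care. By the semigroup property and the uniform bound on \([0,\Lambda_1]\), it suffices to prove \(\Upsilon_r(t)x_0\to x_0\) in \(L^r\) as \(t\to0^+\). From \eqref{eq:Upsilon}, \(\Upsilon_r(t)x_0-x_0\) splits into two pieces.

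The first piece is the translation error \(x_0(\cdot+t)-x_0\) on \((-\Lambda_N,-t)\). It tends to \(0\) by continuity of translations in \(L^r\) for finite \(r\), which one checks via density of \(C_c^\infty\).

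The second piece lives on the interval \((-t,0)\) of length \(t\). There the integrand is bounded by
\[
\Bigl|\sum_j A_j x_0(t+\theta-\Lambda_j)\Bigr|+|x_0(\theta)|.
\]
The \(L^r\) norm of each term over a set of measure \(t\) tends to \(0\), by absolute continuity of the integral of \(|x_0|^r\); the shifted terms are handled the same way, since their arguments range over the interval \((-\Lambda_j,t-\Lambda_j)\), whose length is also \(t\).

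This argument fails for \(r=\infty\), which explains the restriction in the statement.
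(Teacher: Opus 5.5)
Your proposal is correct and takes essentially the same route as the paper. The paper defers the proof to \cite{Mazanti2017Relative, Chitour2020Approximate}, and the remark following the statement sketches exactly your argument: the method of steps on intervals of length $\Lambda_1$, linearity for \eqref{eq:solution}, autonomy plus uniqueness for the semigroup property, and continuity of translations in $L^r$ for strong continuity. The only minor difference is that you get boundedness of $\Upsilon_r(t)$ and $E_r(t)$ by iterating the first-step estimate, using time invariance of the controlled system, whereas the paper takes it from the explicit formulas in the cited works; both are valid.
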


\begin{remark}
Although we refer to \cite{Chitour2020Approximate, Mazanti2017Relative} for the proof of Proposition~\ref{prop:exist-unique-formula}, most of its consequences are actually straightforward to establish. Indeed, the existence and uniqueness of solutions of \eqref{main_system} can be easily established by remarking that \eqref{main_system} already gives an explicit formula for the solution \(x(t)\) whenever \(t \in [0, \Lambda_1)\) (yielding in particular \eqref{eq:Upsilon-E}), and this procedure can be iterated on intervals of length \(\Lambda_1\) to obtain the existence and uniqueness of a solution defined in \((-\Lambda_N, +\infty)\), which can be easily seen to belong to \(L^r_{\mathrm{loc}}((-\Lambda_N, +\infty), \mathbb C^d)\) by the same iterative argument. The identity \eqref{eq:solution} is a simple consequence of the linearity of the equation, and the fact that \((\Upsilon_r(t))_{t \geq 0}\) is a semigroup of linear operators comes from the fact that, in the absence of control, \eqref{main_system} is autonomous. The works \cite{Chitour2020Approximate, Mazanti2017Relative} also provide explicit formulas for \(\Upsilon_r(t)\) and \(E_r(t)\) for any \(t \geq 0\), which allow one to easily obtain that these linear operators are continuous. Finally, the fact that \((\Upsilon_r(t))_{t \geq 0}\) is a \(C^0\)-semigroup if \(r \in [1, +\infty)\) follows easily from \eqref{eq:Upsilon} and the continuity of translations in \(L^r\) for finite \(r\).
\end{remark}

\subsection{The operator \texorpdfstring{\(F_r\)}{Fr} and its adjoint}
\label{sec:Fr}

In the light of Proposition~\ref{prop:exist-unique-formula}, the natural candidate for the operator \(F_r\) appearing in \eqref{eq:abstract} is the generator of the semigroup \((\Upsilon_r(t))_{t \geq 0}\). The next result characterizes this operator. Its proof follows classical arguments in functional analysis (see, e.g., \cite[\S~II.2.10, Proposition~1]{Engel2001} for the case of translation semigroups on the whole real line).

\begin{proposition}
\label{prop:generator-Fr}
Let \(r \in [1, +\infty)\) and \((\Upsilon_r(t))_{t \geq 0}\) be the \(C^0\)-semigroup from Proposition~\ref{prop:exist-unique-formula}. The generator of \((\Upsilon_r(t))_{t \geq 0}\) is the operator \(F_r\colon D(F_r) \subset \mathsf X^r \to \mathsf X^r\) defined by
\begin{equation}
\label{eq:def-Fr}
\begin{aligned}
D(F_r) & = \left\{z \in W^{1, r}((-\Lambda_N, 0), \mathbb C^d) \suchthat z(0) = \sum_{j = 1}^N A_j z(-\Lambda_j)\right\}, \\
F_r z & = z^\prime.
\end{aligned}
\end{equation}
\end{proposition}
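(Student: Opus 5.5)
Denote by \(\mathcal A\) the generator of \((\Upsilon_r(t))_{t \geq 0}\) and by \(F_r\) the operator in \eqref{eq:def-Fr}. The plan is to prove \(\mathcal A \subset F_r\) by a direct difference-quotient computation based on \eqref{eq:Upsilon}, and then obtain equality from a resolvent argument: \(\lambda - \mathcal A\) is a bijection from \(D(\mathcal A)\) onto \(\mathsf X^r\) for large real \(\lambda\), and we will show \(\lambda - F_r\) is injective. Since \(\mathcal A \subset F_r\), this forces \(D(F_r) = D(\mathcal A)\).

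\emph{Step 1: \(\mathcal A \subset F_r\).} Take \(z \in D(\mathcal A)\) and set \(w = \mathcal A z\). Then \(\frac{1}{t}(\Upsilon_r(t) z - z) \to w\) in \(L^r\). For \(0 < t < \Lambda_1\), formula \eqref{eq:Upsilon} shows that on \((-\Lambda_N, -t)\) this quotient equals \(\frac{1}{t}(z(\cdot + t) - z)\). Testing against \(\varphi \in C_c^\infty((-\Lambda_N, 0), \mathbb C^d)\) and moving the translation onto \(\varphi\) gives \(\int \langle w, \varphi\rangle = -\int \langle z, \varphi'\rangle\). Hence \(z \in W^{1,r}\) with \(z' = w\), and in particular \(z\) is continuous on \([-\Lambda_N, 0]\). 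For the boundary condition, look at the interval \(\theta \in (-t, 0)\) of length \(t\). There the quotient is \(\frac{1}{t}\bigl(\sum_j A_j z(t+\theta-\Lambda_j) - z(\theta)\bigr)\). Its \(L^r\)-norm on this short interval must stay bounded, since it converges to \(w\). Rescaling \(\theta = -t s\), \(s\in(0,1)\), this norm is \(t^{-1+1/r}\) times the \(L^r(0,1)\)-norm of \(\sum_j A_j z(t(1-s)-\Lambda_j) - z(-ts)\). By continuity of \(z\), the latter converges uniformly in \(s\) to the constant \(\sum_j A_j z(-\Lambda_j) - z(0)\). Since \(t^{-1+1/r}\to\infty\) for \(r>1\), that constant must vanish. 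This step is the delicate one. For \(r = 1\) the factor \(t^{0}\) does not blow up, so I would instead integrate: \(\int_{-t}^0 \frac{1}{t}(\cdots)\,d\theta \to \sum_j A_j z(-\Lambda_j) - z(0)\), while \(\int_{-t}^0 (\text{quotient})\,d\theta\) is within \(\norm{\text{quotient}-w}_{L^1} + \int_{-t}^0 \abs{w} \to 0\) of zero. This argument works for every \(r\) and is the one I would actually use.

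\emph{Step 2: injectivity of \(\lambda - F_r\).} If \(z \in D(F_r)\) and \(z' = \lambda z\), then \(z(\theta) = e^{\lambda\theta} z(0)\). The boundary condition gives \(\bigl(I_d - \sum_j A_j e^{-\lambda\Lambda_j}\bigr) z(0) = 0\). For real \(\lambda\) large enough, \(\sum_j \norm{A_j} e^{-\lambda\Lambda_j} < 1\), so the matrix is invertible and \(z = 0\).

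\emph{Conclusion.} Choose \(\lambda\) larger than the growth bound of the semigroup and large enough for Step 2. For \(z \in D(F_r)\), let \(y = (\lambda - \mathcal A)^{-1}(\lambda - F_r) z \in D(\mathcal A)\). By Step 1, \((\lambda - F_r) y = (\lambda - \mathcal A) y = (\lambda - F_r) z\), so Step 2 gives \(z = y \in D(\mathcal A)\). Thus \(D(F_r) = D(\mathcal A)\) and \(F_r = \mathcal A\).

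The main obstacle is the boundary-condition part of Step 1, i.e. extracting the coupling \(z(0)=\sum_j A_j z(-\Lambda_j)\) from the short interval \((-t,0)\) where \eqref{eq:Upsilon} mixes values of \(z\) at different delays. The averaging argument handles it once continuity of \(z\) from the interior analysis is in hand.
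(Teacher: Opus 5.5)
Your proof is correct, and it differs from the paper's in two places. The minor one is the boundary condition. The paper tests the difference quotient against a fixed $\varphi \in C^1_c((-\Lambda_N,0],\mathbb C^d)$ with $\varphi(0)\neq 0$. It then compares the limit with the integration-by-parts formula $\langle z',\varphi\rangle_{L^r} = -\langle z,\varphi'\rangle_{L^r} + \langle z(0),\varphi(0)\rangle_{\mathbb C^d}$. You instead integrate the quotient over the shrinking interval $(-t,0)$, which amounts to testing against a normalized indicator of that interval. Your argument needs only the continuity of $z$ and the absolute continuity of $\int \abs{w}$. Both arguments work for every $r\in[1,+\infty)$. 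Your rescaling argument for $r>1$ is also valid, and you are right to discard it since it breaks down at $r=1$.

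The more substantial difference is how you get from $\mathcal A\subset F_r$ to equality. The paper shows $\lambda\in\rho(F_r)$ by writing down the resolvent $B_\lambda$ explicitly. It checks that $B_\lambda$ maps $\mathsf X^r$ boundedly into $D(F_r)$, and then cites the fact that $\mathcal F_r\subset F_r$ together with a common resolvent point forces $\mathcal F_r = F_r$. You observe that surjectivity of $\lambda-\mathcal A$ comes for free from semigroup theory once $\lambda$ exceeds the growth bound. Given $\mathcal A\subset F_r$, only injectivity of $\lambda-F_r$ is then needed, and that is the one-line kernel computation the paper also performs at the end of its proof. Your route is shorter and removes the need to verify the explicit resolvent formula, including the boundary condition for $B_\lambda w$. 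The paper's route gives, as a by-product, an explicit formula for $(\lambda I-F_r)^{-1}$, which the statement itself does not require.
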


\begin{proof}
Since \((\Upsilon_r(t))_{t \geq 0}\) is a \(C^0\)-semigroup, it admits a generator. Let \(F_r\colon D(F_r) \subset \mathsf X^r \to \mathsf X^r\) be the operator defined in \eqref{eq:def-Fr} and \(\mathcal F_r\colon D(\mathcal F_r) \subset \mathsf X^r \to \mathsf X^r\) be the generator of \((\Upsilon_r(t))_{t \geq 0}\).

We first show that \(\mathcal F_r \subset F_r\), i.e., that \(D(\mathcal F_r) \subset D(F_r)\) and that \(\mathcal F_r z = F_r z\) for every \(z \in D(\mathcal F_r)\). Take \(z \in D(\mathcal F_r)\) and let \(w = \mathcal F_r z \in \mathsf X^r\). Take \(\varphi \in C^\infty_c((-\Lambda_N, 0), \mathbb C^d)\). By definition of the generator and continuity of the duality bracket, we have
\begin{align*}
\scalprod{w, \varphi}_{L^r} & = \lim_{h \to 0^+} \scalprod*{\frac{\Upsilon_r(h) z - z}{h}, \varphi}_{L^r} \\ 
& = \lim_{h \to 0^+} \frac{1}{h} \left[\int_{-\Lambda_N}^0 \scalprod*{(\Upsilon_r(h) z)(\theta), \varphi(\theta)}_{\mathbb C^d} \,d\theta - \int_{-\Lambda_N}^0 \scalprod*{z(\theta), \varphi(\theta)}_{\mathbb C^d} \,d\theta\right]. 
\end{align*}
Since \(\varphi\) is compactly supported in \((-\Lambda_N, 0)\), for \(h > 0\) small enough, we have
\[
\int_{-\Lambda_N}^0 \scalprod*{(\Upsilon_r(h) z)(\theta), \varphi(\theta)}_{\mathbb C^d} \,d\theta
 = \int_{-\Lambda_N}^{0} \scalprod*{z(\theta), \varphi(\theta - h)}_{\mathbb C^d} \,d\theta,
\]
so that, by Lebesgue's dominated convergence theorem,
\[
\scalprod{w, \varphi}_{L^r} = \lim_{h \to 0^+} \int_{-\Lambda_N}^0 \scalprod*{z(\theta), \frac{\varphi(\theta - h) - \varphi(\theta)}{h}}_{\mathbb C^d} \,d\theta = - \scalprod{z, \varphi^\prime}_{L^r}.
\]
Since $z\in \mathsf X^r$ and $w\in \mathsf X ^r$, we deduce that \(z \in W^{1, r}((-\Lambda_N, 0), \mathbb C^d)\) and \(z^\prime = w\).

Now, let \(\varphi \in C^1_c((-\Lambda_N, 0], \mathbb C^d)\). Proceeding as before and using the Sobolev injection of \(W^{1, r}((-\Lambda_N, 0), \mathbb C^d)\) into \(C([-\Lambda_N, 0], \mathbb C^d)\), we get
\begin{align*}
\scalprod{z^\prime, \varphi}_{L^r} & = \lim_{h \to 0^+} \scalprod*{\frac{\Upsilon_r(h) z - z}{h}, \varphi}_{L^r} \\
& = \lim_{h \to 0^+} \Biggl[\int_{-\Lambda_N}^0 \scalprod*{z(\theta), \frac{\varphi(\theta - h) - \varphi(\theta)}{h}}_{\mathbb C^d} \,d\theta \\
& \hphantom{\displaystyle = \lim_{h \to 0^+} \Biggl[} + \frac{1}{h}\int_{-h}^0 \scalprod*{\sum_{j = 1}^N A_j z(h + \theta - \Lambda_j), \varphi(\theta)}_{\mathbb C^d} \,d\theta\Biggr] \\
& = -\scalprod{z, \varphi^\prime}_{L^r} + \scalprod*{\sum_{j = 1}^N A_j z(-\Lambda_j), \varphi(0)}_{\mathbb C^d}.
\end{align*}
On the other hand, integrating by parts, we get that \(\scalprod{z^\prime, \varphi}_{L^r} = -\scalprod{z, \varphi^\prime}_{L^r} + \scalprod{z(0), \varphi(0)}_{\mathbb C^d}\), so that
\[
\scalprod{z(0), \varphi(0)}_{\mathbb C^d} = \scalprod*{\sum_{j = 1}^N A_j z(-\Lambda_j), \varphi(0)}_{\mathbb C^d}.
\]
Since the above identity holds for every \(\varphi \in C^1_c((-\Lambda_N, 0], \mathbb C^d)\), we deduce that \(z(0) = \sum_{j = 1}^N A_j z(-\Lambda_j)\), and thus \(z \in D(F_r)\). So \(\mathcal F_r \subset F_r\).

To prove that \(\mathcal F_r = F_r\), it suffices now to show that their respective resolvent sets \(\rho(\mathcal F_r)\) and \(\rho(F_r)\) have a non-empty intersection (see, e.g., \cite[\S~IV.1.21, Exercise (5)]{Engel2001}). Since \(\mathcal F_r\) is the generator of \((\Upsilon_r(t))_{t \geq 0}\), there exists \(\omega \in \mathbb R\) such that \((\omega, +\infty) \subset \rho(\mathcal F_r)\). It thus suffices to prove that, for a real number \(\lambda\) large enough, we have \(\lambda \in \rho(F_r)\).
Let \(\lambda \in \mathbb R\) and consider the operator
\(\lambda I - F_r\). Assume that \(\lambda\) is large enough
so that the matrix
\[
A_\lambda = I_d - \sum_{j=1}^N e^{-\lambda \Lambda_j} A_j
\]
is invertible, which is possible since \(A_\lambda \to I_d\)
as \(\lambda \to +\infty\).
Define \(B_\lambda \colon \mathsf X^r \to \mathsf X^r\)
by setting, for \(w \in \mathsf X^r\) and a.e.\ \(\theta \in (-\Lambda_N, 0)\),
\[
(B_\lambda w)(\theta)
=
e^{\lambda \theta} A_\lambda^{-1}
\sum_{j=1}^N \int_{-\Lambda_j}^0
e^{-\lambda(s+\Lambda_j)} A_j w(s)\,ds
+
\int_\theta^0 e^{-\lambda(s-\theta)} w(s)\,ds.
\]
For \(w \in \mathsf X^r\), set \(z = B_\lambda w\).
The above formula gives \(z \in W^{1,r}((-\Lambda_N,0),\mathbb C^d\)),
\(z' = \lambda z-w\), and
\(\|z\|_{\mathsf X^r} \le C_\lambda \|w\|_{\mathsf X^r}\) for some constant \(C_\lambda > 0\) independent of \(w\).
Moreover,
\[
z(0)-\sum_{j=1}^N A_j z(-\Lambda_j)
=
A_\lambda z(0)
-\sum_{j=1}^N \int_{-\Lambda_j}^0
e^{-\lambda(s+\Lambda_j)} A_j w(s)\,ds
=0.
\]
It follows that \(B_\lambda w \in D(F_r)\) for every
\(w \in \mathsf X^r\), \(B_\lambda \in \mathcal L(\mathsf X^r)\),
and \((\lambda I-F_r)B_\lambda w=w\) for every
\(w \in \mathsf X^r\).
Finally, if \(z \in D(F_r)\) satisfies \((\lambda I-F_r)z=0\),
then \(z(\theta)=e^{\lambda\theta}z(0)\), and the boundary
condition yields \(A_\lambda z(0)=0\). Since \(A_\lambda\)
is invertible, \(z=0\), so \(\lambda I-F_r\) is injective.
Hence \(\lambda \in \rho(F_r)\), yielding the conclusion.
\end{proof}

For later use, it is crucial to characterize the adjoint of \(F_r\), which we now compute. For that purpose, set $\Lambda_0 = 0$ and $I_j = (-\Lambda_{j+1}, -\Lambda_j)$ for $j \in \{0, \dots, N-1\}$. As we shall need only \(r \in (1, +\infty)\) in the sequel, we also make this assumption from now on.

\begin{proposition}
\label{prop:adjoint_F}
Let \(r \in (1, +\infty)\) and \(F_r\colon D(F_r) \subset \mathsf X^r \to \mathsf X^r\) be the operator defined in \eqref{eq:def-Fr}. Then the adjoint of \(F_r\) is the operator \(F_r^\ast \colon D(F_r^\ast) \subset \mathsf X^{r'} \to \mathsf X^{r'}\) defined by
\begin{align}
D(F^*_r) = {} & \Bigl\{y \in \mathsf X^{r'} \mathop{}\Big\vert\mathop{} y_{|I_j} \in W^{1,r'}(I_j, \mathbb C^d)\ \text{for all }j \in \{0,\dots,N-1\}, \label{DFe} \\
& y(-\Lambda_j^+) - y(-\Lambda_j^-) = A_j^*\,y(0^-) \text{ for } j\in\{1,\dots,N-1\}, \notag \\
& y(-\Lambda_N^+)=A_N^*\,y(0^-) \Bigr\}, \notag \\
(F^*_r y)(\theta) = {} & -y_{| I_j}'(\theta) \text{ for every } j \in \{0, \dotsc, N-1\} \text{ and a.e.\ } \theta \in I_j. \label{eq:F-r-ast}
\end{align}
\end{proposition}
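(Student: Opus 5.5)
We prove the statement by showing two inclusions: first that the operator \(\widetilde F\) defined by \eqref{DFe}--\eqref{eq:F-r-ast} satisfies \(\widetilde F \subset F_r^\ast\), and then that \(D(F_r^\ast) \subset D(\widetilde F)\) with \(F_r^\ast y = -y'\) piecewise. Recall that \(y \in D(F_r^\ast)\) if and only if there is \(w \in \mathsf X^{r'}\) such that \(\scalprod{F_r z, y} = \scalprod{z, w}\) for every \(z \in D(F_r)\), and then \(F_r^\ast y = w\). The duality \((\mathsf X^r)' = \mathsf X^{r'}\) is legitimate since \(r \in (1,+\infty)\).

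\textbf{Integration by parts.} For \(z \in D(F_r)\) and \(y\) piecewise \(W^{1,r'}\) on the intervals \(I_j\), we integrate by parts on each \(I_j\):
\[
\int_{-\Lambda_N}^0 \scalprod{z', y} \,d\theta = -\sum_{j=0}^{N-1}\int_{I_j}\scalprod{z, y'}\,d\theta + \scalprod{z(0), y(0^-)} - \scalprod{z(-\Lambda_N), y(-\Lambda_N^+)} - \sum_{j=1}^{N-1}\scalprod*{z(-\Lambda_j), y(-\Lambda_j^+) - y(-\Lambda_j^-)}.
\]
Here we use that \(z\) is continuous, by the Sobolev embedding. Next we substitute the boundary condition \(z(0) = \sum_j A_j z(-\Lambda_j)\), which gives
\[
\scalprod{z(0), y(0^-)} = \sum_j \scalprod{z(-\Lambda_j), A_j^\ast y(0^-)}.
\]
The boundary terms therefore reduce to \(\sum_{j=1}^{N} \scalprod{z(-\Lambda_j), A_j^\ast y(0^-) - [\text{jump at } -\Lambda_j]}\), with the convention \(y(-\Lambda_N^-) = 0\). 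Under the jump conditions in \eqref{DFe} all these terms vanish. This yields \(\scalprod{F_r z, y} = \scalprod{z, -y'}\), hence \(\widetilde F \subset F_r^\ast\).

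\textbf{Converse inclusion.} Let \(y \in D(F_r^\ast)\) and \(w = F_r^\ast y\).

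First, we test with \(z \in C_c^\infty(I_j, \mathbb C^d)\). Such \(z\) lie in \(D(F_r)\), since \(z\) vanishes at \(0\) and at every \(-\Lambda_k\). Testing gives \(-y' = w\) in the distributional sense on \(I_j\), so \(y_{|I_j} \in W^{1,r'}(I_j, \mathbb C^d)\) and the one-sided traces exist.

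Second, we apply the integration-by-parts identity above to an arbitrary \(z \in D(F_r)\). Comparing it with \(\scalprod{F_r z, y} = \scalprod{z, w}\), the interior terms cancel, and we are left with
\[
\sum_{j=1}^N \scalprod{z(-\Lambda_j), c_j} = 0 \quad \text{for all } z \in D(F_r),
\]
where \(c_j = A_j^\ast y(0^-) - (y(-\Lambda_j^+) - y(-\Lambda_j^-))\).

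\textbf{Main obstacle.} The delicate step is to show that the values \((z(-\Lambda_1), \dots, z(-\Lambda_N))\) can be prescribed arbitrarily, independently of one another, within \(D(F_r)\). We construct such \(z\) explicitly. Given arbitrary vectors \(v_1, \dots, v_N \in \mathbb C^d\), take a smooth \(z\) with \(z(-\Lambda_j) = v_j\) for each \(j\) and \(z(0) = \sum_j A_j v_j\). This is possible because the points \(0, -\Lambda_1, \dots, -\Lambda_N\) are pairwise distinct, so a piecewise-linear interpolant, or smooth bumps localized near each point, will do. Such a \(z\) belongs to \(D(F_r)\). Since the \(v_j\) are arbitrary, we conclude that \(c_j = 0\) for every \(j\), which is exactly the set of jump conditions in \eqref{DFe}, including the endpoint condition \(y(-\Lambda_N^+) = A_N^\ast y(0^-)\).

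Hence \(y \in D(\widetilde F)\) and \(F_r^\ast y = -y'\) piecewise, which proves \(F_r^\ast = \widetilde F\).
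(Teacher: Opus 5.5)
Your proof is correct and takes essentially the same route as the paper. You test with \(C_c^\infty(I_j)\) functions to get piecewise \(W^{1,r'}\) regularity, then integrate by parts on each \(I_j\) and substitute the boundary condition of \(D(F_r)\) to read off the jump conditions, and you obtain the reverse inclusion from the same identity. The only differences are that you take the two inclusions in the opposite order and that you explicitly justify that \(z(-\Lambda_1),\dots,z(-\Lambda_N)\) can be prescribed independently within \(D(F_r)\), a step the paper leaves implicit.
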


\begin{proof}
Let \(\mathcal D\) be the set defined in the right-hand side of \eqref{DFe}.

We first show that \(D(F_r^\ast) \subset \mathcal D\) and that \(F_r^\ast\) is given by \eqref{eq:F-r-ast} on \(D(F_r^\ast)\). Let $y\in D(F^*_r)$ and set $g = F^*_ry\in \mathsf X^{r'}$. By definition of the adjoint,
\begin{equation}
\label{eq:adjoint_identity}
\langle F_rx,y\rangle_{L^r}
=
\langle x,g\rangle_{L^r} \qquad \text{ for all } x\in D(F_r).
\end{equation}
Let us start by showing that $y$ is $W^{1,r'}$ on each interval $I_j$.
Fix $j\in\{0,\dots,N-1\}$ and let $\varphi\in C_c^\infty(I_j,\mathbb C^d) \subset C_c^\infty((-\Lambda_N,0),\mathbb C^d)\subset W^{1,r}((-\Lambda_N,0),\mathbb C^d)$. Clearly, $\varphi \in D(F_r)$. Applying \eqref{eq:adjoint_identity} with $x = \varphi$, we obtain
$\int_{I_j}\langle \varphi'(\theta),y(\theta)\rangle_{\mathbb C^d}\,d\theta
=
\int_{I_j}\langle \varphi(\theta),g(\theta)\rangle_{\mathbb C^d}\,d\theta,$
\textit{i.e.}, $y'=-g$ in the distributional sense on $I_j$. Since $g\in L^{r'}((-\Lambda_N, 0), \allowbreak\mathbb C^d)$, we deduce that
$y_{|I_j}\in W^{1,r'}(I_j,\mathbb C^d)$ and $y_{|I_j}'=-g\in L^{r'}(I_j,\mathbb C^d)$, so that \eqref{eq:F-r-ast} is verified.
In addition, $y$ is piecewise $W^{1,r'}$, so that the one-sided traces $y(0^-)$, $y(-\Lambda_N^+)$ and $y(-\Lambda_j^\pm)$, $j \in \{1,\dots,N-1\}$, are well defined.
Using~\eqref{eq:adjoint_identity} with any $x\in D(F_r)$, and recalling that
$F_rx=x'$ and $g=-y'$ on each $I_j$, we obtain 
\[
0=
\langle F_rx,y\rangle_{L^{r}}-\langle x,g\rangle_{L^{r}}=
\sum_{j=0}^{N-1}\int_{I_j}
\Bigl(
\langle x'(\theta),y(\theta)\rangle_{\mathbb C^d}
+
\langle x(\theta),y'(\theta)\rangle_{\mathbb C^d}
\Bigr)\,d\theta.
\]
Integrating by parts on each interval $I_j$ and summing up, we get
\begin{multline*}
0 = \langle x(0),y(0^-)\rangle_{\mathbb C^d}
-\langle x(-\Lambda_N),y(-\Lambda_N^+)\rangle_{\mathbb C^d} \\ +\sum_{j=1}^{N-1}
\langle x(-\Lambda_j),\,y(-\Lambda_j^-)-y(-\Lambda_j^+)\rangle_{\mathbb C^d}.
\end{multline*}
Since $x \in D(F_r)$, we have $x(0)=\sum_{j=1}^N A_jx(-\Lambda_j)$,
and then
\begin{align*}
0 = {} &
\sum_{j=1}^N \langle A_jx(-\Lambda_j),y(0^-)\rangle_{\mathbb C^d}
-\langle x(-\Lambda_N),y(-\Lambda_N^+)\rangle_{\mathbb C^d} \\
& + \sum_{j=1}^{N-1}
\langle x(-\Lambda_j),\,y(-\Lambda_j^-)-y(-\Lambda_j^+)\rangle_{\mathbb C^d} \\
{} = {} & \sum_{j=1}^{N-1}
\left\langle
x(-\Lambda_j),
A_j^*y(0^-)+y(-\Lambda_j^-)-y(-\Lambda_j^+)
\right\rangle_{\mathbb C^d} \\
& +
\left\langle
x(-\Lambda_N),
A_N^*y(0^-)-y(-\Lambda_N^+)
\right\rangle_{\mathbb C^d}.
\end{align*}
Since this identity holds for every \(x \in D(F_r)\), we deduce that
$A_j^*y(0^-)+y(-\Lambda_j^-)-y(-\Lambda_j^+)=0$ for $j \in \{1,\dots,N-1\}$ and
$A_N^*y(0^-)-y(-\Lambda_N^+)=0$. Hence \(D(F_r^\ast) \subset \mathcal D\).

To prove that \(\mathcal D \subset D(F_r^\ast)\), take \(y \in \mathcal D\) and define $g\in L^{r'}((-\Lambda_N,0),\mathbb C^d)$ by
$g(\theta) = -y_{| I_j}'(\theta)$ for every \(j \in \{0, \dotsc, N-1\}\) and a.e.\ $\theta\in I_j$. Let \(x \in D(F_r)\). Integrating by parts on each interval $I_j$ yields
\begin{align*}
\langle F_rx,y\rangle_{L^{r}} = {}
& \sum_{j=0}^{N-1}\int_{I_j}\langle x'(\theta),y(\theta)\rangle_{\mathbb C^d}\,d\theta \\
{} = {} & -\sum_{j=0}^{N-1}\int_{I_j}\langle x(\theta),y'(\theta)\rangle_{\mathbb C^d}\,d\theta \\
&
+\langle x(0),y(0^-)\rangle_{\mathbb C^d}
-\langle x(-\Lambda_N),y(-\Lambda_N^+)\rangle_{\mathbb C^d} \\
&
+\sum_{j=1}^{N-1}
\langle x(-\Lambda_j),\,y(-\Lambda_j^-)-y(-\Lambda_j^+)\rangle_{\mathbb C^d}.
\end{align*}
Using the fact that $x(0)=\sum_{j=1}^N A_jx(-\Lambda_j)$ and the jump conditions on \(y \in \mathcal D\), all boundary terms in the above expression cancel out, and we obtain
\begin{align*}
\langle F_rx,y\rangle_{L^{r}}
&=
-\sum_{j=0}^{N-1}\int_{I_j}\langle x(\theta),y'(\theta)\rangle_{\mathbb C^d}\,d\theta=
\int_{-\Lambda_N}^0 \langle x(\theta),g(\theta)\rangle_{\mathbb C^d}\,d\theta =
\langle x,g\rangle_{L^{r}}.
\end{align*}
Hence, by definition of the adjoint, we have $y\in D(F^*_r)$ and $F^*_ry=g$, as required.
\end{proof}

\begin{remark}
\label{remk:adjoint-semigroup}
Let \(r \in (1, +\infty)\). Since \(\mathsf X^r\) is a reflexive space, the adjoint semigroup \((\Upsilon_r(t)^\ast)_{t \geq 0}\) is a \(C^0\)-semigroup in \(\mathsf X^{r'}\) and its generator is \(F_r^\ast\) (see, e.g., \cite[Chapter~I, \S~5.14]{Engel2001}).
\end{remark}

In the sequel, we always consider that $D(F^*_r)$ is endowed with its graph norm, given by $\|x\|_{D(F^*_r)}= \|x\|_{\mathsf X^{r'}} + \|F_r^*x\|_{\mathsf X^{r'}}$ for \(x \in D(F_r^\ast)\), which makes it a Banach space since $F^*_r$ is closed. We denote by $D(F^*_r)'$ the dual space of $D(F^*_r)$. We shall also need to work with the extrapolation space \(\mathsf X_{-1}^{r}\) of order \(-1\) defined as the completion of \(\mathsf X^r\) with respect to the norm \(x \mapsto \norm{(\lambda I - F_r)^{-1} x}_{\mathsf X^r}\) (for a fixed $\lambda \in \rho(F_r)$), and we refer to \cite[Section~II.5.a]{Engel2001} for properties of extrapolation spaces used in the sequel. Since $\mathsf X^{r}$ is reflexive, the space \(\mathsf X^{r}_{-1}\) can be canonically identified with $D(F_r^*)'$. Moreover, the operator \(F_r\) extends to an unbounded operator in \(\mathsf X_{-1}^r\) with domain \(\mathsf X^r\), which we still denote by \(F_r\), and the semigroup $(\Upsilon_r(t))_{t\geq 0}$ extends to a $C_0$-semigroup on \(\mathsf X_{-1}^r\), which we still denote by $(\Upsilon_r(t))_{t\geq 0}$, and whose generator is the extended operator \(F_r\). In addition, this extension satisfies \(F_r \in \mathcal L(\mathsf X^r, \mathsf X_{-1}^{r})\).

With these definitions, one can already identify \eqref{main_system} and \eqref{eq:abstract} in the absence of control. Indeed, the Cauchy problem \(z^\prime(t) = F_r z(t)\) in \(\mathsf X_{-1}^r\) with initial condition \(z(0) = z_0 \in \mathsf X^r\) admits a unique solution \(z \in C^0(\mathbb R_+, \mathsf X^r) \cap C^1(\mathbb R_+, \mathsf X_{-1}^r)\), given by \(z(t) = \Upsilon_r(t) z_0\) for \(t \in \mathbb R_+\), and, by Proposition~\ref{prop:exist-unique-formula}, this is precisely the history function associated with the unique solution of \eqref{main_system} with initial condition \(x_0 = z_0\) and null control.

\subsection{The operator \texorpdfstring{$G_r$}{Gr} and its adjoint}

We next provide an operator \(G_r \in \mathcal L(\mathbb C^m, D(F_r^\ast)')\) that will enable us to precisely define mild solutions of \eqref{eq:abstract} and relate them to solutions of \eqref{main_system}. We define \(G_r\) by setting, for \(u \in \mathbb C^m\),
\begin{equation}
\label{defG}
G_r u = \scalprod{B u, \delta_{0} (\cdot)}_{\mathbb C^d} \text{ in } D(F^*_r)',
\end{equation}
where \(\delta_0\colon D(F^*_r) \to \mathbb C^d\) is the linear map given by \(\delta_0 y = y(0^-)\) for \(y \in D(F_r^\ast)\). Using standard trace estimates in \(W^{1, r'}\), \(\delta_0\) is continuous, and hence \(G_r \in \mathcal L(\mathbb C^m, D(F_r^\ast)')\). The adjoint operator \(G_r^\ast\) is the linear operator defined, for \(u\in\mathbb C^m\) and \(y \in D(F^*_r)\), by the duality formula
\begin{equation}
\label{defGadj}
\scalprod{G_r u, y}_{D(F^*_r)' \times D(F^*_r)} = 
\scalprod{u,G_r^\ast y}_{\mathbb C^m}=
\scalprod{Bu, y(0^-)}_{\mathbb C^d},
\end{equation}
and is simply given by
\begin{equation}
\label{eq:def-Gr}
G_r^\ast y = B^\ast y(0^-),\qquad
y \in D(F_r^\ast).
\end{equation}

\subsection{Equivalence between Systems~(\ref{main_system}) and (\ref{eq:abstract})}\label{sec:abstract}

Using the previous definition of \(G_r\), we are in a position
to relate Systems \eqref{main_system} and \eqref{eq:abstract}.
Given \(z_0 \in \mathsf X^r\) and \(u \in \mathsf U^r\),
a \emph{mild solution} of \eqref{eq:abstract} with initial condition
\(z_0\) and control \(u\) is a function \(z \in C^0(\mathbb R_+,\mathsf X^r)\)
satisfying
\begin{equation}\label{eq:mild_solution_abstract}
z(t)
=
\Upsilon_r(t)z_0
+
\int_0^t \Upsilon_r(t-s)G_r u(s)\,ds,
\qquad t \geq 0,
\end{equation}
where the integral is taken in \(\mathsf X^r_{-1}\). Note also that, for \(t > 0\), the integral term in \eqref{eq:mild_solution_abstract} defines an operator from \(L^r((0, t), \mathbb C^m)\) to \(\mathsf X_{-1}^r\). We recall that a control operator \(G_r\) is said to be \emph{admissible} for \(F_r\) (or, equivalently, for \((\Upsilon_r(t))_{t \geq 0}\)) if the operator defined by the integral term of \eqref{eq:mild_solution_abstract} actually belongs to \(\mathcal L(L^r((0, t), \mathbb C^m), \mathsf X^r)\). The theory of admissible control and observation operators in \(L^r\)-spaces goes back in particular to Weiss \cite{Weiss1989Control, Weiss1989Observation}; see also the monograph by
Staffans \cite[Chapter~10]{Staffans2005} for the \(L^r\)-well-posed
systems framework, and the survey \cite{JacobPartington2004} for an
overview of admissibility questions.

Our main result on the equivalence between Systems~\eqref{main_system} and \eqref{eq:abstract} is the following.

\begin{theorem}
\label{th_system_abstract}
Let \(r \in (1, +\infty)\), \(F_r\) be defined by \eqref{eq:def-Fr}, \((\Upsilon_r(t))_{t \geq 0}\) be the semigroup generated by \(F_r\) defined in Proposition~\ref{prop:exist-unique-formula}, and \(G_r\) be defined by \eqref{defG}. Take \(x_0 \in \mathsf X^r\), \(u \in \mathsf U^r\). Then \(z\) is a mild solution of \eqref{eq:abstract} starting at $x_0$ if and only if \(z(t) = x_t\), where \(t \mapsto x_t\) is the history function associated with the unique solution \(x\) of \eqref{main_system} starting at $x_0$. As a consequence, \(G_r\) is admissible for \(F_r\).
\end{theorem}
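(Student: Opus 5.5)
Because the semigroup part of \eqref{eq:mild_solution_abstract} coincides with the free part of \eqref{eq:solution}, the statement reduces to showing that the input maps agree. We must prove that, for every \(t \geq 0\) and \(u \in L^r((0,t),\mathbb C^m)\),
\[
\Phi_t u := \int_0^t \Upsilon_r(t-s) G_r u(s)\,ds = E_r(t) u \quad \text{in } \mathsf X^r_{-1}.
\]
Once this holds, both directions of the equivalence follow. Existence and uniqueness of mild solutions come from formula \eqref{eq:mild_solution_abstract} itself. The history function \(x_t\) is continuous in \(t\) with values in \(\mathsf X^r\), since \(x\in L^r_{\mathrm{loc}}\) and translations are continuous in \(L^r\). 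Admissibility follows because \(E_r(t) \in \mathcal L(L^r((0,t),\mathbb C^m),\mathsf X^r)\) by Proposition~\ref{prop:exist-unique-formula}.

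To prove the identity, I will test both sides against \(y \in D(F_r^\ast)\), identifying \(\mathsf X^r_{-1}\) with \(D(F_r^\ast)'\). By Remark~\ref{remk:adjoint-semigroup}, \(\Upsilon_r(t-s)^\ast y \in D(F_r^\ast)\). Using \eqref{defGadj}, this gives
\[
\scalprod{\Phi_t u, y} = \int_0^t \scalprod{u(s), B^\ast (\Upsilon_r(t-s)^\ast y)(0^-)}_{\mathbb C^m}\,ds .
\]
Both sides are linear in \(u\), and by density it suffices to take \(u\) continuous (or even \(C_c^\infty\)). It is also enough to treat \(t\) small, say \(t\in(0,\Lambda_1)\), and then propagate to all \(t\) via the cocycle identities \(E_r(t+\tau)u = \Upsilon_r(\tau)E_r(t)u|_{[0,t]} + E_r(\tau)u(t+\cdot)\). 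The analogous identity for \(\Phi\) follows directly from the semigroup property of \(\Upsilon_r\) on \(\mathsf X^r_{-1}\).

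For \(t<\Lambda_1\), I want to avoid computing \(\Upsilon_r^\ast\) explicitly. Instead, I will use the classical characterization: \(\Phi_t u\) is the unique function \(w\in C([0,t],\mathsf X^r_{-1})\) satisfying \(w(\tau)=\int_0^\tau \Upsilon_r(\tau-s)G_ru(s)\,ds\). Equivalently, for each \(y\in D(F_r^\ast)\), \(\frac{d}{d\tau}\scalprod{w(\tau),y} = \scalprod{w(\tau),F_r^\ast y} + \scalprod{u(\tau), B^\ast y(0^-)}\) with \(w(0)=0\). So I will check that \(w(\tau)=E_r(\tau)u\) satisfies this weak equation. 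By \eqref{eq:E}, \(w(\tau)(\theta) = Bu(\tau+\theta)\) on \((-\tau,0)\) and \(0\) elsewhere. Since \(\tau<\Lambda_1\), only the interval \(I_0\) matters, and there \(y\) is \(W^{1,r'}\). Then
\[
\scalprod{w(\tau),y} = \int_{-\tau}^0 \scalprod{Bu(\tau+\theta), y(\theta)}\,d\theta = \int_0^\tau \scalprod{Bu(\sigma), y(\sigma-\tau)}\,d\sigma .
\]
Differentiating in \(\tau\) gives \(\scalprod{Bu(\tau), y(0^-)} - \int_0^\tau \scalprod{Bu(\sigma), y'(\sigma-\tau)}\,d\sigma\), which is exactly \(\scalprod{u(\tau),B^\ast y(0^-)} + \scalprod{w(\tau), F_r^\ast y}\) by \eqref{eq:F-r-ast}. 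Uniqueness of weak solutions, which holds since \(D(F_r^\ast)\) is a core for the dual generator (a Ball-type argument), then gives \(\Phi_\tau u = E_r(\tau)u\).

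The step I expect to be the main obstacle is the justification of this weak-solution characterization in the Banach (reflexive) setting. This involves the equivalence between the \(\mathsf X_{-1}^r\)-valued integral and the weak formulation tested on \(D(F_r^\ast)\), together with the uniqueness. If that route is delicate, my fallback is to verify the identity via resolvents. I would compute \((\lambda - F_r)^{-1}G_r v\) for \(v\in\mathbb C^m\) by solving \(\lambda z - z' = 0\) with jump condition \(z(0)-\sum A_j z(-\Lambda_j) = Bv\), which gives \(z(\theta)=e^{\lambda\theta}A_\lambda^{-1}Bv\). I would then compare Laplace transforms of \(\Phi_t u\) and \(E_r(t)u\) for \(\lambda\) large, and conclude by injectivity of the Laplace transform.
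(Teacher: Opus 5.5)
Your argument is correct, but it closes the proof by a different mechanism than the paper. The paper never writes a weak formulation. It observes that \((\Upsilon_r(t))_{t\geq0}\) and \((E_r(t))_{t\geq0}\) satisfy Weiss's composition property. It then invokes \cite[Theorem~3.9]{Weiss1989Control} to obtain a unique admissible \(\widetilde G_r\in\mathcal L(\mathbb C^m,\mathsf X^r_{-1})\) representing \(E_r(t)\). Finally it identifies \(\widetilde G_r=G_r\) through \(\widetilde G_r v=\lim_{h\downarrow0}\frac1h E_r(h)\mathbf v\) for the constant control \(\mathbf v\); tested against \(y\in D(F_r^\ast)\), this is just \(\frac1h\int_{-h}^0\langle Bv,y(\theta)\rangle_{\mathbb C^d}\,d\theta\to\langle Bv,y(0^-)\rangle_{\mathbb C^d}\).

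Your derivative of \(\langle E_r(\tau)u,y\rangle\) for \(\tau<\Lambda_1\) produces the same boundary term, so the computational core is shared. What differs is the uniqueness principle. The paper uses uniqueness of the control operator representing a given abstract control system. You use the cocycle identity together with uniqueness of weak solutions, in effect reproving the relevant part of Weiss's theorem in this concrete case. This makes your route more self-contained, at the price of two technical points you should make explicit.

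First, \(\langle\Phi_\tau u,F_r^\ast y\rangle\) with \(F_r^\ast y\in\mathsf X^{r'}\) is only meaningful if \(\Phi_\tau u\in\mathsf X^r\). This holds for \(u\in C_c^\infty\): write \(G_r=(\lambda-F_r)(\lambda-F_r)^{-1}G_r\) and integrate by parts in \(s\). Alternatively, stay in \(\mathsf X^r_{-1}\) and test with \(y\in D((F_r^\ast)^2)\), the domain of the adjoint of the extended generator.

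Second, uniqueness does not come from \(D(F_r^\ast)\) being a core, which is tautological. It follows from Ball's theorem, or directly by differentiating \(s\mapsto\langle d(s),\Upsilon_r(\tau-s)^\ast y\rangle\) for the difference \(d\) of two weak solutions.

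Your Laplace-transform fallback is also sound and is the most explicit of the three routes. Indeed, \((\lambda-F_r)^{-1}G_rv=e^{\lambda\cdot}H(\lambda)^{-1}Bv\) matches the transform \(e^{\lambda\theta}H(\lambda)^{-1}B\hat u(\lambda)\) of the history of \eqref{main_system} with zero initial data. This links \(G_r\) directly to the characteristic matrix.
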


\begin{proof}
By uniqueness of solutions and the fact that System~\eqref{main_system} is autonomous, the families
\((\Upsilon_r(t))_{t\geq0}\) and \((E_r(t))_{t\geq0}\)
satisfy the composition property of
\cite[Definition~2.1]{Weiss1989Control}.
Hence, by \cite[Theorem~3.9]{Weiss1989Control}, there exists
a unique admissible control operator
\(\widetilde G_r \in
\mathcal L(\mathbb C^m,\mathsf X^r_{-1})\)
representing \(E_r(t)\).

For \(v \in \mathbb C^m\), let \(\mathbf v\) denote
the constant control equal to \(v\).
Equation~(3.6) in \cite{Weiss1989Control} gives
\[
\widetilde G_r v
=
\lim_{h\downarrow0}
\frac{1}{h}E_r(h)(\mathbf v|_{[0,h]})
\quad\text{in }\mathsf X^r_{-1}.
\]
Using \eqref{eq:E}, we obtain, for every \(y \in D(F_r^*)\),
\[
\begin{aligned}
\langle \widetilde G_r v,y\rangle
&=
\lim_{h\downarrow0}
\frac{1}{h}\int_{-h}^0
\langle Bv,y(\theta)\rangle_{\mathbb C^d}\,d\theta\\
&=
\langle Bv,y(0^-)\rangle_{\mathbb C^d}
=
\langle G_r v,y\rangle.
\end{aligned}
\]
Thus \(\widetilde G_r=G_r\), and therefore
\begin{equation}\label{eq:equality-Et-convo}
E_r(t)u|_{[0,t]}
=
\int_0^t \Upsilon_r(t-s)G_r u(s)\,ds,
\qquad u \in \mathsf U^r,\quad t\geq0.
\end{equation}
The equivalence follows from \eqref{eq:solution}
and \eqref{eq:mild_solution_abstract}, while the admissibility
of \(G_r\) follows from that of \(\widetilde G_r\).
\end{proof}

\section{Exact controllability}
\label{sec:controllability}
Having established the abstract formulation \eqref{eq:abstract} of System~\eqref{main_system}, we are now in a position to investigate its control-theoretic properties, and more specifically, in this section, the notion of exact controllability, whose definition we provide next.

\begin{definition}
\label{def:controllability}
Let \(r \in [1, +\infty]\) and \(T > 0\). System~\eqref{main_system} is said to be \emph{exactly controllable in $\mathsf X^r$ in time $T$} if, for every initial condition $x_0 \in \mathsf{X}^r$ and every target $\psi \in \mathsf{X}^r$, there exists a control $u \in \mathsf{U}^r$ such that the history function \(t \mapsto x_t\) associated with the unique solution \(x\) of \eqref{main_system} with initial condition \(x_0\) and control \(u\) satisfies \(x_T = \psi\).
\end{definition}

\begin{remark}
\label{remk:Cd-Rd}
The choice of considering complex-valued matrices, states and control in this paper is motivated by the fact that the controllability criteria we shall establish are more naturally stated in such a framework, which also has the advantage of simplifying some proofs (see, e.g., the proof of Proposition~\ref{prop:Lr_Hautus} below, which makes use of the fact that one works with complex Banach spaces). This choice is not restrictive since, when the matrices \(A_1, \dotsc, A_N\) and \(B\) have real coefficients, the dynamics of the real and imaginary parts of the state \(x\) and the control \(u\) of \eqref{main_system} are independent. Hence, if \(x_{\rm ini}\) and \(x_{\rm fin}\) take values in \(\mathbb R^d\) and \(u\in L^r((0, T),\mathbb{C}^m)\) is a control steering \(x_{\rm ini}\) to \(x_{\rm fin}\) in time \(T\), then \(\Re(u) \in L^r((0, T),\mathbb{R}^m)\) (where the real part is taken componentwise) also steers \(x_{\rm ini}\) to \(x_{\rm fin}\) in time \(T\). Conversely, if \(x_{\rm ini}\) and \(x_{\rm fin}\) take values in \(\mathbb C^d\) and \(u_1, u_2\in L^r((0, T),\mathbb{R}^m)\) are controls steering \(\Re(x_{\rm ini})\) to \(\Re(x_{\rm fin})\) and \(\Im(x_{\rm ini})\) to \(\Im(x_{\rm fin})\), respectively, in time \(T\), then \(u = u_1 + i u_2\) steers \(x_{\rm ini}\) to \(x_{\rm fin}\) in time \(T\). In particular, the main results of this paper remain true when we replace \(\mathbb C^d\) and \(\mathbb C^m\) by \(\mathbb R^d\) and \(\mathbb R^m\), respectively.
\end{remark}

While the notion of exact controllability depends on \(T\), we have the following result relating exact controllability for different times, which is an immediate consequence of \cite[Theorem~4.7]{chitour:hal-03827918}\footnote{\cite[Theorem~4.7]{chitour:hal-03827918} is stated only for finite \(r\), but the argument of its proof also covers the case \(r = +\infty\).}.

\begin{proposition}
\label{prop:control-equivalence-time}
Let \(r \in [1, +\infty]\). Then the following assertions are equivalent.
\begin{enumerate}
\item There exists \(T > 0\) such that System~\eqref{main_system} is exactly controllable in \(\mathsf X^r\) in time \(T\).
\item\label{item:contr-dLambdaN} System~\eqref{main_system} is exactly controllable in \(\mathsf X^r\) in time \(d \Lambda_N\).
\item\label{item:contr-geq-dLambdaN} System~\eqref{main_system} is exactly controllable in \(\mathsf X^r\) in time \(T\) for every \(T \in [d \Lambda_N, +\infty)\).
\end{enumerate}
\end{proposition}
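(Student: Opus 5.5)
The chain of implications 3 $\Rightarrow$ 2 $\Rightarrow$ 1 is immediate: item 3 with $T = d\Lambda_N$ gives item 2, and item 2 gives item 1 with $T = d\Lambda_N$. The real content is 1 $\Rightarrow$ 3. The paper presents the proposition as a direct consequence of \cite[Theorem~4.7]{chitour:hal-03827918}, so the plan is to reduce to that result and then check that the argument also covers $r = +\infty$, which is the case not explicitly stated there.

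For 1 $\Rightarrow$ 3, the first step is monotonicity in time. Suppose the system is exactly controllable in time $T$, and let $T' \geq T$. To steer $x_0$ to $\psi$ in time $T'$, apply the zero control on $[0, T'-T]$, which takes the state to $\Upsilon_r(T'-T)x_0 \in \mathsf X^r$. Then use controllability in time $T$ from that state to $\psi$, and concatenate the two controls. The concatenated control lies in $\mathsf U^r$ and is correct because of \eqref{eq:solution} and the semigroup property, since the system is autonomous. This gives controllability for every $T' \geq T$.

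The remaining step, which is the main one, is to show that controllability in some time $T$ already holds in time $d\Lambda_N$. By \eqref{eq:solution}, exact controllability in time $T$ is equivalent to $\Ran E_r(T) = \mathsf X^r$, because the target $\psi - \Upsilon_r(T)x_0$ is an arbitrary element of $\mathsf X^r$. The explicit formula for $E_r(t)$ from \cite{Chitour2020Approximate, Mazanti2017Relative} writes $x(t)$ as a finite sum of terms $\Xi_{\mathbf n} B\, u(t - \mathbf n\cdot\Lambda)$ over multi-indices $\mathbf n \in \mathbb N^N$. Here the coefficients $\Xi_{\mathbf n}$ satisfy the recursion $\Xi_{\mathbf n} = \sum_j A_j \Xi_{\mathbf n - e_j}$. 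The matrices needed to reach the state are therefore $\Xi_{\mathbf n}B$.

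A Cayley--Hamilton style argument, which is the argument of \cite[Theorem~4.7]{chitour:hal-03827918}, should show the key reduction. Each chain of reachable subspaces indexed by $|\mathbf n|$ stabilizes after at most $d$ steps. Consequently, the span of the relevant $\Xi_{\mathbf n}B$ with $\mathbf n\cdot\Lambda$ in a window of length $\Lambda_N$ is already generated using delays up to $d\Lambda_N$. In other words, the reachable set in time $T$ equals the reachable set in time $d\Lambda_N$.

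I expect the main obstacle to be checking that this combinatorial argument never uses finiteness of $r$. The argument only manipulates pointwise-in-time algebraic relations and finite sums of translates of $u$. Translates and finite sums preserve $L^\infty$, and the construction of the control in time $d\Lambda_N$ from the one in time $T$ is built from finitely many translations and matrix multiplications. Such an operation is bounded in every $L^r$, including $r = \infty$. So the footnote's claim follows once each step is verified to produce controls in the same $L^r$ space, and this verification is routine.
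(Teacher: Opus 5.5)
\paragraph{Overall.} The outer structure is sound and agrees with the paper, which simply invokes \cite[Theorem~4.7]{chitour:hal-03827918}. Specifically, $3\Rightarrow2\Rightarrow1$ is trivial, exact controllability in time $T$ is equivalent to $\Ran E_r(T)=\mathsf X^r$, and your zero-control-then-concatenate argument correctly gives $\Ran E_r(T)\subset\Ran E_r(T')$ for $T\le T'$.

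\paragraph{The gap.} The problem is the mechanism you propose for the key reduction. The chain $V_k=\operatorname{span}\{\Xi_{\mathbf n}B : |\mathbf n|\le k\}\subset\mathbb C^d$ stabilizes after at most $d$ steps, but that is the kind of finite-dimensional fact relevant to steering the point value $x(T)$. Your ``in other words'' passage from it to equality of reachable sets in $\mathsf X^r$ is a non sequitur. In $E_r(T)$, a single value $u(s)$ enters $x_T(\theta)$ for every $\theta$ with $T+\theta-s\in\{\mathbf n\cdot\Lambda\}$, and for non-commensurate delays these points do not lie on a lattice. So if you rewrite each $\Xi_{\mathbf n}B$ with $|\mathbf n|\ge d$ using $\mathbf n$-dependent coefficients and relocate the corresponding control values, you alter $x_T$ at other values of $\theta$. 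Span information alone does not determine $\Ran E_r(T)$. You need a relation whose coefficients are independent of $\mathbf n$, so that it can be implemented by one translation-invariant operator acting on $u$. For the same reason, the structural claim on which you base the case $r=+\infty$ is not supported by the argument you sketch.

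\paragraph{The missing ingredient: a scalar Cayley--Hamilton identity.}
\begin{enumerate}
\item Let $(\tau_s f)(t)=f(t-s)$ act on functions vanishing on $(-\infty,0)$, and set $M=I_d-\sum_{j=1}^N A_j\tau_{\Lambda_j}$. This is a matrix over the commutative algebra generated by $\tau_{\Lambda_1},\dots,\tau_{\Lambda_N}$.
\item Extend $u$, and the solution $x$ with zero initial condition, by $0$ on $(-\infty,0)$. Then $Mx=Bu$, and multiplying by $\operatorname{adj}M$ gives $q(\tau)x=P(\tau)Bu$. Here $q=\det M$ is a scalar polynomial of total degree at most $d$ with constant term $1$, and $P=\operatorname{adj}M$ has degree at most $d-1$.
\item Every nonconstant monomial of $q$ is a shift by at least $\Lambda_1$, so $q(\tau)$ is invertible by a Neumann series that is locally finite. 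Hence $x=P(\tau)B\tilde u$ with $\tilde u=q(\tau)^{-1}u$.
\item Since $P$ involves only shifts by at most $(d-1)\Lambda_N$, the final state $x_T$ depends only on $\tilde u$ on $(T-d\Lambda_N,T)$.
\item Let $T\ge d\Lambda_N$ and let $v$ be $q(\tau)(\mathbf 1_{(T-d\Lambda_N,T)}\tilde u)$ restricted to $(T-d\Lambda_N,T)$. By causality, $E_r(T)v=E_r(T)u$.
\item By time invariance, $\Ran E_r(T)\subset\Ran E_r(d\Lambda_N)$. Your monotonicity step gives the reverse inclusion, so the two ranges are equal for every $T\ge d\Lambda_N$.
\end{enumerate}

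\paragraph{Why $r=+\infty$ then follows.} The map $u\mapsto v$ is a finite combination of translations with scalar coefficients and restrictions. It is therefore bounded on $L^r$ for every $r\in[1,+\infty]$. This structure, not the span argument, is what makes the case $r=+\infty$ routine. A reduction through duality and the closed range theorem would not extend so easily, since the dual of $L^\infty$ is not a function space.
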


In the sequel, we say that System~\eqref{main_system} is \emph{exactly controllable in \(\mathsf X^r\)} if any of the equivalent assertions from Proposition~\ref{prop:control-equivalence-time} is satisfied.

Note that, by Theorem~\ref{th_system_abstract}, for \(r \in (1, +\infty)\), exact controllability of \eqref{main_system} in \(\mathsf X^r\) in time \(T\) is equivalent to the corresponding property for \eqref{eq:abstract}, namely, that, for every \(z_0, \psi \in \mathsf X^r\), there exists a control \(u \in \mathsf U^r\) such that the unique mild solution \(z\) of \eqref{eq:abstract} with initial condition \(z_0\) and control \(u\) satisfies \(z(T) = \psi\).

In order to study the controllability properties of \eqref{main_system}, we introduce its \emph{characteristic matrix}, defined as the holomorphic function \(H \colon \mathbb C \to \mathbb C^{d \times d}\) given by
\begin{equation}\label{hqp}
H(p) = I_d - \sum_{j=1}^{N} e^{-p\Lambda_j} A_j, \qquad p \in \mathbb{C}.
\end{equation}
We shall denote by \(H(\mathbb C) \subset \mathbb C^{d \times d}\) the range of \(H\), and by \(\overline{H(\mathbb C)}\) its closure. The characteristic matrix has long been used in the stability analysis of \eqref{main_system} (see, e.g., \cite[Chapter~9]{Hale1993Introduction}) and it can be obtained by taking the Laplace transform of \eqref{main_system}. Its properties have also been related to controllability properties of \eqref{main_system} in \cite{chitour:hal-03827918, fueyo:hal-04830757, Corona_fueyo} and, in particular, we have the following result.

\begin{proposition}[{\cite[Theorems~3.3 and 3.5]{fueyo:hal-04830757}}]
\label{prop_con}
Let $r \in [1, +\infty)$ and assume that the following conditions hold:
\begin{enumerate}[1)]
\item \label{assumption1_prev}
$\rank \left[M, B\right]=d$ for every $M \in \overline{H(\mathbb{C})}$,
\item \label{assumption2_prev}
$\rank \left[A_N, B\right]=d$.
\end{enumerate}
Then System~\eqref{main_system} is exactly controllable in $\mathsf{X}^r$. Moreover, when $r=1$, these conditions are also necessary for exact controllability.
\end{proposition}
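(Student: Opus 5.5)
The plan is to work with the explicit input-to-state map: controllability in time $T$ means that $E_r(T)$ from Proposition~\ref{prop:exist-unique-formula} satisfies $\Ran E_r(T) \supset \Ran \Upsilon_r(T) + \mathsf X^r$, that is, $E_r(T)$ is onto $\mathsf X^r$. For sufficiency I would pass to the frequency domain. Taking Laplace transforms in \eqref{main_system} gives
\[
H(p)\hat x(p) = B\hat u(p) + \Phi_{x_0}(p),
\]
where $\Phi_{x_0}$ is an entire function built from the initial condition. Prescribing $x_T = \psi$ then becomes the following problem: find $\hat u$ such that $H^{-1}(B\hat u + \Phi_{x_0})$, after subtraction of the contribution of $\psi$, is the Laplace transform of a function supported in a bounded interval.

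The key algebraic step is a \emph{Bézout identity} in the convolution algebra $\mathcal A$ of matrix-valued measures with compact support in $[0,+\infty)$ that are finite sums of Dirac masses plus $L^1$ densities. Concretely, I would seek $P \in \mathcal A^{d\times d}$ and $Q \in \mathcal A^{m\times d}$ with
\[
H(p)\hat P(p) + B\hat Q(p) = I_d .
\]
Condition~\ref{assumption1_prev} is exactly the corona-type hypothesis for this: the tuple $(H,B)$ is uniformly left-coprime on every right half-plane, because any accumulation value of $H$ lies in $\overline{H(\mathbb C)}$, where $[M,B]$ has full rank. Condition~\ref{assumption2_prev} controls the behaviour as $\Re p\to-\infty$, where $H(p) \approx -e^{-p\Lambda_N}A_N$. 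After factoring out $e^{-p\Lambda_N}$, this guarantees coprimeness on left half-planes too, and hence that $P$ and $Q$ can be chosen of finite exponential type with compact support, of length at most $d\Lambda_N$.

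Given $P$ and $Q$, the control is obtained by convolving $Q$ with data built from $x_0$ and $\psi$, with a suitable truncation. Since measures in $\mathcal A$ act boundedly on $L^r$ for every $r\in[1,+\infty)$ by Young's inequality, this yields controllability in $\mathsf X^r$ for every finite $r$ at once. Proposition~\ref{prop:control-equivalence-time} then fixes the time.

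The main obstacle is the corona step. One has to produce $P$ and $Q$ within the algebra of compactly supported measures, and not merely bounded analytic solutions on a half-plane. I would first solve the problem for almost periodic (Dirac-sum) parts via a Wiener-type argument on the Bohr compactification, where $\overline{H(\mathbb C)}$ appears naturally. I would then correct the remaining $L^1$ part using a finite-dimensional reduction based on the structure of \eqref{main_system} on intervals of length $\Lambda_1$.

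For necessity when $r=1$, I would argue by duality. Exact controllability in $L^1$ at time $T$ is equivalent to an observability inequality
\[
\|\varphi\|_{L^\infty} \le C\,\|B^* (\text{adjoint trajectory})\|_{L^\infty(0,T)}
\]
for the adjoint difference system. If condition~\ref{assumption1_prev} fails, pick $M=\lim H(p_n)$ and a unit vector $\eta$ with $\eta^*M=0$ and $\eta^*B=0$. Exponential-type test functions $\varphi_n(\theta)=e^{\bar p_n \theta}\eta$, normalised in $L^\infty$, are then approximate solutions of the adjoint equation whose observation is $o(1)$, which contradicts the inequality. Here $L^\infty$ is what makes the normalisation possible: the point evaluations stay bounded, and $L^\infty$ norms of $e^{p\theta}$ do not degenerate as $\Im p_n\to\infty$. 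If condition~\ref{assumption2_prev} fails, take $\eta$ with $\eta^*A_N=0$ and $\eta^*B=0$. The adjoint datum concentrated near $-\Lambda_N$ in direction $\eta$ then produces a nonzero functional that annihilates the range of $E_1(T)$, and this contradicts surjectivity.
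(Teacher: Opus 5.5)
The paper does not prove this proposition; it imports it from \cite{fueyo:hal-04830757}, with $L^1$ necessity going back to \cite{chitour:hal-03827918}. Your plan therefore has to stand on its own, and its sufficiency half has a genuine gap: condition~2) is used in the wrong place. You spend it on the corona step, to get compact support of $P,Q$. You then claim the controls come from convolving $Q$ with data built from $x_0$ and $\psi$. That passage uses nothing but the B\'ezout identity and compact support, and it cannot work. Take $B=0$ and $A_1=\dots=A_N=0$. Then $H\equiv I_d$, condition~1) holds, and $P=\delta_0 I_d$, $Q=0$ solve the identity with compact support. Yet every solution vanishes on $[0,+\infty)$, so no nonzero target is reachable.

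What a compactly supported identity actually gives is null controllability. Set $\mu=\delta_0 I_d-\sum_j A_j\delta_{\Lambda_j}$, let $h(t)=\sum_j A_jx_0(t-\Lambda_j)$ for $t\in[0,\Lambda_N)$ and $h=0$ elsewhere, and assume $\mathrm{supp}\,P,\mathrm{supp}\,Q\subset[0,S]$. Then $y=P*h$ and $u=-Q*h$ satisfy $\mu*y=h+Bu$. Hence $x=x_0\mathbf{1}_{[-\Lambda_N,0)}+y$ is the trajectory driven by $u$, $x_T=0$ for $T\ge S+2\Lambda_N$, and $\|u\|_{L^r}\le\|Q\|\,\|h\|_{L^r}$ for every $r$.

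Condition~2) is what upgrades this to exact controllability. As in the proof of Theorem~\ref{main_result1}, choose $K$ with $A_N+BK$ invertible. The feedback $u=v+Kx(\cdot-\Lambda_N)$ preserves the identity (replace $Q$ by $Q+K\delta_{\Lambda_N}*P$) and turns the free dynamics into a group. Then $\Upsilon_r(T)$ is onto, and null controllability forces $\Ran E_r(T)=\mathsf X^r$.

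Separately, the corona step, which is the real content of the cited result, is not carried out. A Wiener/Bohr-compactification argument produces B\'ezout factors in an algebra of absolutely summable Dirac combinations, in general with unbounded support. Compact support is indispensable above, since otherwise $P*h$ never vanishes after a finite time. Your ``finite-dimensional correction of the $L^1$ part'' is unspecified, so the existence of compactly supported $P,Q$ under condition~1) remains unproved.

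Your $r=1$ necessity argument, by contrast, is sound if the adjoint is read as the transposed system $\xi(t)=\sum_jA_j^*\xi(t-\Lambda_j)$ observed through $B^*\xi$. The exponential $e^{\bar p_n t}\eta$ leaves the residual $e^{\bar p_n t}H(p_n)^*\eta\to0$, $\Re p_n$ is automatically bounded, and errors propagate boundedly over $[0,T]$. If instead you pair the datum with $x_T$ as in the paper, it must carry jumps $A_j^*\eta$ at $-\Lambda_j$, as in Lemma~\ref{lem:key_approximate_eigenvector}. In that pairing the annihilator for condition~2) sits near $0$ and near the $-\Lambda_j$, $j<N$, not near $-\Lambda_N$.

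Note also that nothing in this argument is specific to $L^\infty$. The modulus $|e^{\bar p\theta}|$ does not depend on $\Im p$, whatever the exponent $r'$. The same finite-horizon argument yields necessity of condition~1) for every finite $r$: it is a direct version of the mechanism in the paper's proof of Theorem~\ref{th:iff_controllability}, without Proposition~\ref{prop:Lr_Hautus}.
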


Our aim in the remainder of this section is to prove that conditions \ref{assumption1_prev} and \ref{assumption2_prev} from Proposition~\ref{prop_con} are necessary conditions for exact controllability also in the case \(r \in (1, +\infty)\), providing thus a positive answer to \cite[Conjecture~3.8]{chitour:hal-03827918}. More precisely, our main result in this section is the following.

\begin{theorem}[Characterization of exact controllability]
\label{th:iff_controllability}
Let $r \in [1, +\infty)$. Then System~\eqref{main_system} is exactly controllable in $\mathsf{X}^r$ if and only if the following conditions hold:
\begin{enumerate}[1)]
\item\label{item:cond-rank-H} $\rank \left[M, B\right] = d$ for every $M \in \overline{H(\mathbb{C})}$,
\item\label{item:cond-rank-AN} $\rank \left[A_N, B\right] = d$.
\end{enumerate}
\end{theorem}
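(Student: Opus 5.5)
Sufficiency for every $r \in [1,+\infty)$, and necessity for $r=1$, are already contained in Proposition~\ref{prop_con}. It therefore remains to prove necessity for $r \in (1,+\infty)$. We assume that System~\eqref{main_system} is exactly controllable in $\mathsf X^r$ in some time $T$; by Proposition~\ref{prop:control-equivalence-time} we may take $T = d\Lambda_N$.

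\textbf{Dual observability.} By Theorem~\ref{th_system_abstract}, exact controllability is exact controllability of the abstract system \eqref{eq:abstract}, and $G_r$ is admissible. The input map $u\mapsto \int_0^T \Upsilon_r(T-s)G_r u(s)\,ds$ is then onto $\mathsf X^r$. Since $\mathsf X^r$ is reflexive, the closed range theorem gives an observability inequality for the dual system on $\mathsf X^{r'}$ generated by $F_r^\ast$ (Remark~\ref{remk:adjoint-semigroup}) with output $G_r^\ast$:
\[
\|\Upsilon_r(T)^\ast y\|_{\mathsf X^{r'}} \le C \|G_r^\ast \Upsilon_r(\cdot)^\ast y\|_{L^{r'}((0,T),\mathbb C^m)}, \qquad y\in D(F_r^\ast).
\]
Controllability from arbitrary initial data even yields the stronger inequality $\|y\|\le C\|G_r^\ast\Upsilon_r(\cdot)^\ast y\|_{L^{r'}}$. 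We are then in the setting of \cite{Russell_Weiss94}: exact observability implies the Hautus-type estimate
\[
\|y\|_{\mathsf X^{r'}}^{r'} \le K\Bigl(\frac{1}{|\Re s|^{r'}}\|(sI-F_r^\ast)y\|^{r'} + \frac{1}{|\Re s|}\|G_r^\ast y\|^{r'}\Bigr)
\]
for all $\Re s<0$ and all $y\in D(F_r^\ast)$. The original argument is Hilbertian with $L^2$ norms, but it only uses the variation-of-constants formula and Hölder's inequality, so it carries over to $L^{r'}$. I would redo it briefly in this setting; this is where complex scalars are used.

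\textbf{Testing with quasi-eigenfunctions.} Next we construct test functions. Fix $p\in\mathbb C$ and a unit vector $v$. Take $y$ of the form $y(\theta)=e^{-\bar p\,\theta}w_j$ on $I_j$ (a solution of $-y'=\bar p y$), where the constants $w_j$ are determined by the jump conditions of Proposition~\ref{prop:adjoint_F}. Computing these, $y$ lies in $D(F_r^\ast)$ with $F_r^\ast y = \bar p y$ exactly when $H(p)^\ast v = 0$ for $v = y(0^-)$. Here $G_r^\ast y = B^\ast v$.

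To handle $M\in\overline{H(\mathbb C)}$ that is not attained, or $H(p)^\ast v$ merely small, I would allow a defect. I would use $y$ built as above but with a boundary mismatch, or correct $y$ by a small bump near $0$ so that it lies in $D(F_r^\ast)$. Then $\|(\bar pI-F_r^\ast)y\|$ is controlled by $\|H(p)^\ast v\|$ times a factor depending on $\Re p$.

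Suppose condition~\ref{item:cond-rank-H} fails. Then there are $M=\lim H(p_n)$ and a unit $v$ with $M^\ast v=0$ and $B^\ast v=0$. Choosing $s=\bar p_n$ (shifting if necessary so that $\Re s<0$ and the real part stays bounded; the real part of $p_n$ can be normalized, since $e^{-p\Lambda_j}$ is bounded when $\Re p$ is bounded below), the right-hand side tends to $0$ while $\|y\|$ stays bounded below, which is a contradiction.

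Condition~\ref{item:cond-rank-AN} should correspond to the regime $\Re p\to -\infty$ after rescaling by $e^{p\Lambda_N}$: $e^{p\Lambda_N}H(p)\to -A_N$ along suitable sequences. Alternatively, one can use a direct argument: $y$ supported near $-\Lambda_N$ with $y(-\Lambda_N^+)=A_N^\ast v$ concentrated. I expect the main obstacle to be this step, together with the uniformity of the Russell--Weiss estimate in the non-Hilbert setting and the careful normalization of the test functions (their norms grow exponentially in $\Re p$). I would first try to obtain \ref{item:cond-rank-AN} directly from the observability inequality, using $y$ with $y(0^-)=v$ only very briefly active, rather than through frequency asymptotics.
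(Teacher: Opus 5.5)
Your overall architecture (duality, the $L^{r'}$ Russell--Weiss estimate, quasi-eigenfunctions of $F_r^\ast$ whose defect is controlled by $\|H(p)^\ast v\|$) matches the paper's. A sequence-dependent shift is a legitimate substitute for the paper's treatment of $\mathcal S_2$ and $\mathcal S_3$ via the resolvent bound of Proposition~\ref{prop:spectral-bound}, provided it means the rescaling of Remark~\ref{rem:translation}: that rescaling preserves exact controllability and $H(\mathbb C)$, and makes the semigroup exponentially stable as Proposition~\ref{prop:Lr_Hautus} requires.

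The genuine gap is condition~\ref{item:cond-rank-AN}, together with the part of your argument for condition~\ref{item:cond-rank-H} that depends on it. You never prove $\rank[A_N,B]=d$; you only list candidate strategies. In the contradiction argument for condition~\ref{item:cond-rank-H} you also tacitly assume $\Re p_n$ is bounded. Excluding $\Re p_n\to+\infty$ is trivial, since $H(p_n)\to I_d$. Nothing you wrote excludes $\Re p_n\to-\infty$, and your remark that $e^{-p\Lambda_j}$ is bounded when $\Re p$ is bounded below presupposes exactly the missing lower bound. In that regime your argument, with the bounds \eqref{lepointcle}--\eqref{eq:lower-bound-z}, gives nothing: the defect carries $e^{\Lambda_N|\Re p_n|}$, $J(\Re p_n)\to0$, and $\|H(p_n)^\ast v\|\to0$ at an unknown rate. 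The paper handles this region ($\mathcal S_1$) \emph{using} condition~\ref{item:cond-rank-AN}. If $v^\ast H(p_n)\to0$ with $\Re p_n\to-\infty$, then $v^\ast e^{p_n\Lambda_N}H(p_n)\to-v^\ast A_N$ forces $v^\ast A_N=0$, and together with $B^\ast v=0$ this contradicts $\rank[A_N,B]=d$.

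So you must first establish condition~\ref{item:cond-rank-AN}. The paper cites \cite[Theorem~3.6]{chitour:hal-03827918} (exact, hence approximate, controllability forces it). A direct time-domain argument also works. Suppose $v\neq0$, $v^\ast A_N=0$ and $v^\ast B=0$. Then every trajectory satisfies $v^\ast x(t)=\sum_{j<N}v^\ast A_jx(t-\Lambda_j)$ whatever $u$ is. Hence, for $T\ge\Lambda_N$, every reachable $x_T$ lies in the closed subspace defined by $v^\ast\psi(\theta)=\sum_{j<N}v^\ast A_j\psi(\theta-\Lambda_j)$ for a.e.\ $\theta\in(\Lambda_{N-1}-\Lambda_N,0)$. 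This subspace does not contain $v\mathbf{1}_{(-\eta,0)}$ for small $\eta>0$.

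Your frequency-domain intuition can also be made rigorous, though not via $e^{p\Lambda_N}H(p)\to-A_N$. The point is that \eqref{lepointcle} is very lossy for $\Re p<0$: on $\operatorname{supp}\chi'\subset[-\Lambda_N,-\Lambda_{N-1}-\varepsilon]$ one has $|e^{-\bar p t}|\le e^{-(\Lambda_{N-1}+\varepsilon)|\Re p|}$. Take $p=-\xi$ with $\xi\to+\infty$ and $A_N^\ast v=0$, so that $\|H(p)^\ast v\|\le Ce^{\xi\Lambda_{N-1}}$. Then the defect is $O(e^{-\varepsilon\xi})$, while $\|z\|^{r'}\ge J(-\xi)\sim 1/(r'\xi)$ and $B^\ast z(0^-)=0$, which contradicts the Hautus estimate. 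The cruder bound $|e^{-\bar p t}|\le1$ for $\Re p<0$ likewise disposes of $\Re p_n\to-\infty$ in your argument for condition~\ref{item:cond-rank-H}. With $B^\ast v=0$, the Hautus estimate then forces $\|H(p_n)^\ast v\|^{r'}\ge c\,|\Re p_n|^{r'-1}$ for large $n$, which is incompatible with $\|H(p_n)^\ast v\|\to0$.
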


\begin{remark}
From Theorem~\ref{th:iff_controllability}, the rank conditions characterizing controllability are independent of the exponent $r\in [1,+\infty)$. Consequently, if the system is controllable in $L^{r_1}$ for some $r_1\in [1,+\infty)$, then the same rank conditions are satisfied for any other exponent $r_2\in [1,+\infty)$, which implies controllability in $L^{r_2}$. Therefore, controllability in the spaces $L^r$, $r\in [1,+\infty)$, is equivalent.
\end{remark}

\begin{remark}
\label{rem:translation}
By Proposition~\ref{prop_con}, it suffices to show that conditions \ref{item:cond-rank-H} and \ref{item:cond-rank-AN} from Theorem~\ref{th:iff_controllability} are necessary conditions for exact controllability in \(\mathsf X^r\) for \(r \in (1, +\infty)\). In addition, it suffices to prove this result under the additional assumption that System~\eqref{main_system} is exponentially stable in the absence of control.

Indeed, take \(r \in (1, +\infty)\) and consider System~\eqref{main_system}. Since \((\Upsilon_r(t))_{t \geq 0}\) is a \(C^0\)-semigroup on \(\mathsf X^r\), there exist constants \(C \geq 1\) and \(\omega \in \mathbb R\) such that \(\norm{\Upsilon_r(t)}_{\mathcal L(\mathsf X^r)} \leq C e^{\omega t}\) for every \(t \geq 0\). If \(\omega < 0\), then \eqref{main_system} is already exponentially stable, so we only consider in what follows the case \(\omega \geq 0\). By \eqref{eq:solution}, any solution \(x\) of \eqref{main_system} with some initial condition \(x_0 \in \mathsf X^r\) and with null control satisfies \(\norm{x_t}_{\mathsf X^r} \leq C e^{\omega t} \norm{x_0}_{\mathsf X^r}\) for every \(t \geq 0\).

Let \(\lambda = \omega + 1\) and consider the system
\begin{equation}
\label{eq:system_translation}
y(t) = \sum_{j=1}^N e^{-\lambda \Lambda_j} A_j y(t - \Lambda_j) + B v(t),
\end{equation}
which corresponds to \eqref{main_system} with \(A_1, \dotsc, A_N\) replaced by \(e^{-\lambda \Lambda_1} A_1, \dotsc, e^{-\lambda \Lambda_N} A_N\). Clearly, \(x\) is the solution of \eqref{main_system} with some initial condition \(x_0\) and some control \(u\) if and only if the function \(y\colon t \mapsto e^{-\lambda t} x(t)\) is the solution of \eqref{eq:system_translation} with initial condition \(y_0\colon \theta \mapsto e^{-\lambda \theta} x_0(\theta)\) and control \(v\colon t \mapsto e^{-\lambda t} u(t)\). This one-to-one correspondence of trajectories allows one to establish that the exact controllability of \eqref{eq:system_translation} in \(\mathsf X^r\) is equivalent to the exact controllability of \eqref{main_system} in \(\mathsf X^r\). Moreover, for solutions with null control, we have
\begin{align*}
\norm{y_t}_{\mathsf X^r}^r & = \int_{-\Lambda_N}^0 e^{-r\lambda (t + \theta)} \abs*{x(t + \theta)}^r \, d\theta \leq e^{r \lambda \Lambda_N} e^{-r\lambda t} \norm{x_t}_{\mathsf X^r}^r \\
& \leq C^r e^{r \lambda \Lambda_N} e^{-r t} \norm{x_0}_{\mathsf X^r}^r = C^r e^{r \lambda \Lambda_N} e^{-r t} \int_{-\Lambda_N}^0 e^{r \lambda \theta} \abs{y_0(\theta)}^r \, d\theta \\
& \leq C^r e^{r \lambda \Lambda_N} e^{-r t} \norm{y_0}_{\mathsf X^r}^r,
\end{align*}
so that the semigroup \((\Upsilon^y_r(t))_{t \geq 0}\) corresponding to \eqref{eq:system_translation} in the sense of Proposition~\ref{prop:exist-unique-formula} satisfies \(\norm{\Upsilon^y_r(t)}_{\mathcal L(\mathsf X^r)} \leq C e^{\lambda \Lambda_N} e^{-t}\) for every \(t \geq 0\), and thus \eqref{eq:system_translation} is exponentially stable.

Finally, the characteristic matrix \(H_\lambda\) of \eqref{eq:system_translation} is related to the characteristic matrix \(H\) of \eqref{main_system} by \(H_\lambda(p) = H(p + \lambda)\) for all \(p \in \mathbb C\), and in particular \(H_\lambda(\mathbb C) = H(\mathbb C)\). Since in addition \(\rank \left[e^{-\lambda \Lambda_N}A_N, B\right] = \rank \left[A_N, B\right]\), we deduce that conditions \ref{item:cond-rank-H} and \ref{item:cond-rank-AN} from Theorem~\ref{th:iff_controllability} are satisfied for \eqref{main_system} if and only if the corresponding conditions are satisfied for \eqref{eq:system_translation}.
\end{remark}

\subsection{The Hautus observability test}

In order to prove Theorem~\ref{th:iff_controllability} in the case \(r \in (1, +\infty)\), we will exploit duality and a Hautus test for observability. For that purpose, let us first detail the controllability--observability duality in the context of System~\eqref{main_system}. It follows from Proposition~\ref{prop:exist-unique-formula} (see also \cite[Proposition~4.5]{chitour:hal-03827918} or \cite[Proposition~2.7]{Chitour2020Approximate}) that exact controllability in \(\mathsf{X}^r\) in time \(T\) is equivalent to \(\Ran E_r(T) = \mathsf{X}^r\). The closed range theorem yields that exact controllability of \eqref{main_system} in \(\mathsf{X}^r\) in time \(T\) is equivalent to the existence of a constant \(c_T > 0\) such that
\[
\norm{E_r(T)^\ast \varphi}_{L^{r'}((0, T), \mathbb C^m)} \geq c_T \norm{\varphi}_{\mathsf X^{r'}}, \qquad \text{ for all } \varphi \in \mathsf X^{r'},
\]
and, by density, it suffices to verify the above inequality for \(\varphi \in D(F_r^\ast)\). Using \eqref{eq:equality-Et-convo}, one computes that, for \(\varphi \in D(F_r^\ast)\), we have
\[
(E_r(T)^\ast \varphi)(t) = G_r^\ast \Upsilon_r(T - t)^\ast \varphi, \qquad \text{ for a.e.\ } t \in (0, T),
\]
so that exact controllability of \eqref{main_system} in \(\mathsf{X}^r\) in time \(T\) is equivalent to the existence of a constant \(c_T > 0\) such that
\begin{equation}
\label{eq:observability-inequality}
\norm{G_r^\ast \Upsilon_r(\cdot)^\ast \varphi}_{L^{r'}((0, T), \mathbb C^m)} \geq c_T \norm{\varphi}_{\mathsf X^{r'}}, \qquad \text{ for all } \varphi \in D(F_r^\ast).
\end{equation}

The inequality in \eqref{eq:observability-inequality} is known as an \emph{observability inequality}, since it is related to the exact observability property for the abstract system
\[
\left\{
\begin{aligned}
z^\prime(t) & = F_r^\ast z(t), \\
y(t) & = G_r^\ast z(t),
\end{aligned} \qquad t \geq 0,
\right.
\]
where \(z\) takes values in \(\mathsf X^{r'}\) and \(y\) is the observation, taking values in \(\mathbb C^m\). We refer to \cite[Chapter~6]{Tucsnak_Weiss} for more precise definitions of observability in an abstract framework for Hilbert spaces and their relations to observability inequalities.

In \cite{Russell_Weiss94} (see also \cite[Section~6.5]{Tucsnak_Weiss}), an infinite-dimensional Hautus-type condition is provided as a necessary condition for infinite-time exact observability of systems of the form \(z^\prime(t) = A z(t)\), \(y(t) = C z(t)\), for suitable operators \(A\) and \(C\) in Hilbert spaces and with observations in \(L^2\). It turns out that the arguments provided for the proofs in \cite{Russell_Weiss94} and \cite[Section~6.5]{Tucsnak_Weiss} can be easily extended to a Banach setting with observations in \(L^r\). More precisely, we have the following result.

\begin{proposition}[\(L^q\)-Hautus estimate]
\label{prop:Lr_Hautus}
Let \(q \in [1, +\infty)\), \(\mathsf X\) and \(\mathsf Y\) be Banach spaces, \(A\colon D(A) \subset \mathsf X \to \mathsf X\) be the generator of an exponentially
stable \(C_0\)-semigroup \((T(t))_{t \geq 0}\) on \(\mathsf X\), \(D(A)\) be endowed with its graph norm, and $C \in \mathcal L(D(A), \mathsf Y)$. Assume that there exist positive constants \(k\) and \(K\) such that
\begin{equation}
\label{eq:admissible-observable}
k \norm{x}_{\mathsf X}^q \leq \int_0^{+\infty} \norm{C T(t) x}_{\mathsf Y}^q \, dt \leq K \norm{x}_{\mathsf X}^q, \qquad \text{ for all } x \in D(A).
\end{equation}
Then there exists \(m_q>0\) such that, for every \(p\in\mathbb C\) with \(\Re p < 0\), we have 
\begin{equation}
\label{eq:Lr_Hautus}
\frac{1}{\abs{\Re p}^q} \norm{(p I - A) x}_{\mathsf X}^q + \frac{1}{\abs{\Re p}} \norm{C x}_{\mathsf Y}^q \geq m_q \norm{x}_{\mathsf X}^q, \qquad \text{ for every } x\in D(A).
\end{equation}
\end{proposition}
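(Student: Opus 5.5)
The argument follows the Russell--Weiss proof: we write $x$ as an integral of the semigroup along the orbit, split that integral into two pieces, and bound each using \eqref{eq:admissible-observable}. Fix $p\in\mathbb C$ with $\Re p<0$ and $x\in D(A)$, and set $f=(pI-A)x\in\mathsf X$.

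\emph{Step 1: a Duhamel identity.} For $t\ge 0$ we have $\frac{d}{dt}\bigl(e^{-pt}T(t)x\bigr)=-e^{-pt}T(t)f$. Integrating from $0$ to $t$ gives
\begin{equation*}
T(t)x = e^{pt}x - \int_0^t e^{p(t-s)}T(s)f\,ds .
\end{equation*}
Since $\abs{e^{pt}}=e^{\Re p\,t}$ and $\Re p<0$, this can be rearranged as
\begin{equation*}
e^{pt}x = T(t)x + \int_0^t e^{p(t-s)}T(s)f\,ds .
\end{equation*}
The integrand $T(s)f$ lies only in $\mathsf X$, not in $D(A)$, so $C$ cannot be applied to it directly. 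We therefore first treat $f\in D(A)$, i.e.\ $x\in D(A^2)$, and pass to the limit at the end. Applying $C$, which commutes with the integral for regular $f$, we get
\begin{equation*}
e^{pt}Cx = CT(t)x + \int_0^t e^{p(t-s)}CT(s)f\,ds .
\end{equation*}

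\emph{Step 2: the key estimate.} We use the first identity of Step 1 in the form $e^{pt}x=T(t)x+\int_0^t e^{p(t-s)}T(s)f\,ds$. Multiply by a weight and take $L^q(0,\infty)$-norms in $t$. On the left, $\int_0^\infty e^{q\Re p\,t}\,dt=\frac{1}{q\abs{\Re p}}$, so the left side has norm $(q\abs{\Re p})^{-1/q}\norm{x}$.

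Applying the lower bound in \eqref{eq:admissible-observable} directly to $x$ gives
\begin{equation*}
k\norm{x}^q\le \int_0^\infty\norm{CT(t)x}^q\,dt .
\end{equation*}
The second Step 1 identity shows that $CT(t)x = e^{pt}Cx-\int_0^t e^{p(t-s)}CT(s)f\,ds$. The first term has $L^q$-norm $(q\abs{\Re p})^{-1/q}\norm{Cx}$. The second term is a convolution of $e^{p\cdot}\mathbf 1_{\mathbb R_+}$, whose $L^1$-norm is $1/\abs{\Re p}$, with $CT(\cdot)f$, whose $L^q$-norm is at most $K^{1/q}\norm f$ by the upper bound in \eqref{eq:admissible-observable}. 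By Young's inequality its $L^q$-norm is therefore at most $\abs{\Re p}^{-1}K^{1/q}\norm f$. Combining these,
\begin{equation*}
k^{1/q}\norm x\le (q\abs{\Re p})^{-1/q}\norm{Cx}+\abs{\Re p}^{-1}K^{1/q}\norm{(pI-A)x}.
\end{equation*}
Raising to the power $q$ and using $(a+b)^q\le 2^{q-1}(a^q+b^q)$ yields \eqref{eq:Lr_Hautus} with $m_q = k/\bigl(2^{q-1}\max(q^{-1},K)\bigr)$.

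\emph{Step 3: density.} The estimate has been proved for $x\in D(A^2)$, where all manipulations are justified. To extend it to $x\in D(A)$, note that both sides of \eqref{eq:Lr_Hautus} are continuous in the graph norm of $D(A)$, because $C\in\mathcal L(D(A),\mathsf Y)$. Since $D(A^2)$ is dense in $D(A)$ for the graph norm, the inequality extends.

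I expect the main care to lie in Step 3 and in the interchange of $C$ with the Bochner integral in Step 1, since $C$ is unbounded on $\mathsf X$; the density argument resolves both. Exponential stability of the semigroup is used only to guarantee that the integrals over $(0,\infty)$ in \eqref{eq:admissible-observable} make sense. Notably, the complex setting and the sign $\Re p<0$ enter only through $\abs{e^{pt}}=e^{\Re p\,t}$.
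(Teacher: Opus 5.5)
Your proof is correct and follows essentially the same route as the paper. It uses the Duhamel identity for $x\in D(A^2)$, Young's inequality with $\|e^{p\cdot}\|_{L^q(0,\infty)}^q = 1/(q|\Re p|)$ and $\|e^{p\cdot}\|_{L^1(0,\infty)} = 1/|\Re p|$, the bound $(a+b)^q\le 2^{q-1}(a^q+b^q)$, and density of $D(A^2)$ in $D(A)$ for the graph norm. The differences are cosmetic: you give the explicit constant $m_q = k/(2^{q-1}\max(q^{-1},K))$ and skip extending $C T(\cdot)$ to all of $\mathsf X$, which is unnecessary since $f\in D(A)$. The opening sentences of your Step~2 about the $L^q$-norm of $e^{pt}x$ are never used and can be deleted.
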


\begin{remark}
\label{remk:infinite-finite}
When the second inequality in \eqref{eq:admissible-observable} is satisfied, the operator \(C\) is said to be an \emph{infinite-time \(L^q\)-admissible observation operator} for \(A\). For exponentially stable semigroups, infinite-time admissibility is equivalent to finite-time admissibility, defined as the same property with the integral over \([0, +\infty)\) being replaced by an integral over \([0, \tau]\) for some/any \(\tau > 0\) (see, e.g., \cite[Propositions~4.3.2 and 4.3.3 and Remark~4.3.5]{Tucsnak_Weiss} for a proof of this fact in the Hilbert setting and with \(q = 2\), which readily extends to the more general setting considered here). The first inequality in \eqref{eq:admissible-observable} is an infinite-time observability inequality and, when it is satisfied, the pair of operators \((A, C)\) is said to be \emph{exactly observable in infinite time}. Once again, for exponentially stable semigroups, infinite-time exact observability is equivalent to exact observability in some finite time \(\tau > 0\) (see, e.g., \cite[Proposition~6.5.2]{Tucsnak_Weiss} for a proof of this fact in the Hilbert setting and with \(q = 2\)).
\end{remark}

For the sake of completeness, and in order to highlight the adaptations required to go from the Hilbert to the Banach setting and from \(q = 2\) to any \(q \in [1, +\infty)\), we provide below a detailed proof of Proposition~\ref{prop:Lr_Hautus}.

\begin{proof}[Proof of Proposition~\ref{prop:Lr_Hautus}]
Let \(\Psi \in \mathcal L(D(A), L^q((0, +\infty), \mathsf Y))\) be the operator defined by \(\Psi x = C T(\cdot ) x\). Thanks to the second inequality in \eqref{eq:admissible-observable}, \(\Psi\) is well-defined and it can be extended to an operator (still denoted by \(\Psi\)) in \(\mathcal L(\mathsf X, L^q((0, +\infty), \mathsf Y))\).

Let \(x \in D(A^2)\) and \(p\in\mathbb C\) with \(\Re p<0\). Setting $f = (A - p I) x$, the variation-of-constants formula gives
\[
T(t)x
=
e^{pt}x
+
\int_0^t e^{p(t-\sigma)}T(\sigma)f\,d\sigma, \qquad \text{ for all } t \geq 0.
\]
Applying \(C\) to the above identity, we obtain that $(\Psi x)(t) = e^{pt} C x + (e_p * \Psi f)(t)$ for every \(t \geq 0\), where \(*\) denotes the convolution product, \(\Psi f\) is extended by \(0\) for negative times to an element of \(L^q(\mathbb R, \mathsf Y)\), and \(e_p\) is the truncated exponential defined by \(e_p(t) = e^{p t}\) for \(t \geq 0\) and \(e_p(t) = 0\) for \(t < 0\). Young's inequality for the convolution product gives
\[
\|\Psi x\|_{L^q((0,+\infty), \mathsf Y)}
\le
\|e_p\|_{L^q((0,+\infty),\mathbb C)}\|Cx\|_{\mathsf Y}
+
\|e_p\|_{L^1((0,+\infty),\mathbb{C})}\|\Psi f\|_{L^q((0,+\infty), \mathsf Y)}.
\]
Since $\|e_p\|_{L^q((0,+\infty),\mathbb{C})}^q=\frac{1}{q|\Re p|}$ and $
\|e_p\|_{L^1((0,+\infty),\mathbb{C})}=\frac{1}{|\Re p|}$, we get
\[
\|\Psi x\|_{L^q((0,+\infty),\mathsf Y)}
\le
\frac{1}{(q|\Re p|)^{1/q}}\|Cx\|_{\mathsf Y}
+
\frac{\|\Psi\|_{\mathcal L(\mathsf X, L^q((0, +\infty), \mathsf Y))}}{|\Re p|}\|(A-pI)x\|_{\mathsf X}.
\]
Using the inequality \((a+b)^q\le 2^{q-1}(a^q+b^q)\) with $a,b>0$, we deduce that there exists a constant \(C_q > 0\) depending only on \(A\), \(C\), and \(q\) such that
\[
\|\Psi x\|_{L^q((0,+\infty), \mathsf Y)}^q
\le
C_q
\left(
\frac{1}{|\Re p|}\|Cx\|_{\mathsf Y}^q
+
\frac{1}{|\Re p|^q}\|(pI-A)x\|_{\mathsf X}^q
\right).
\]
By density of \(D(A^2)\) in \(D(A)\) for the graph norm, the above estimate extends to every \(x \in D(A)\). Combining with the first inequality of \eqref{eq:admissible-observable}, we obtain \eqref{eq:Lr_Hautus} with \(m_q = \frac{k}{C_q}\).
\end{proof}

Note that \eqref{eq:Lr_Hautus} was proved only in the case \(\Re p < 0\). When \(\Re p > 0\), classical resolvent estimates yield a stronger inequality. More precisely, we have the following result, which is an immediate corollary of \cite[Chapter~II, Theorem~1.10]{Engel2001}.

\begin{proposition}
\label{prop:spectral-bound}
Let \(\mathsf X\) be a Banach space, \(A\colon D(A) \subset \mathsf X \to \mathsf X\) be the generator of a \(C_0\)-semigroup \((T(t))_{t \geq 0}\) on \(\mathsf X\), and \(M \geq 1\) and \(\omega \in \mathbb R\) be such that \(\norm{T(t)}_{\mathcal L(\mathsf X)} \leq M e^{\omega t}\) for every \(t \geq 0\). Then, for every \(x \in D(A)\) and \(p \in \mathbb C\) with \(\Re p > \omega\), we have
\begin{equation}
\label{cplus}
\frac{1}{\Re p - \omega} \norm{(p I - A) x}_{\mathsf X} \geq \frac{1}{M} \norm{x}_{\mathsf X}.
\end{equation}
\end{proposition}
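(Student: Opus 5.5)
The estimate will follow from the resolvent identity for generators combined with the Hille--Yosida type bound on the Laplace transform of the semigroup. Fix \(p \in \mathbb C\) with \(\Re p > \omega\). The first step is to recall that such \(p\) belongs to \(\rho(A)\), and that the resolvent is given by the Laplace transform
\[
R(p, A) y = (pI - A)^{-1} y = \int_0^{+\infty} e^{-p t} T(t) y \, dt, \qquad y \in \mathsf X,
\]
where the integral converges absolutely as a Bochner integral since \(\norm{e^{-pt}T(t)y}_{\mathsf X} \leq M e^{-(\Re p - \omega)t}\norm{y}_{\mathsf X}\). This is exactly the content of \cite[Chapter~II, Theorem~1.10]{Engel2001}, which I will invoke directly rather than reprove.

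The second step is to estimate the resolvent norm. Integrating the pointwise bound above gives
\[
\norm{R(p, A)}_{\mathcal L(\mathsf X)} \leq \int_0^{+\infty} M e^{-(\Re p - \omega) t} \, dt = \frac{M}{\Re p - \omega}.
\]

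The last step is to apply this to \(y = (pI - A)x\) for a given \(x \in D(A)\). Since \(R(p,A)(pI-A)x = x\) on \(D(A)\), we obtain
\[
\norm{x}_{\mathsf X} \leq \frac{M}{\Re p - \omega}\norm{(pI - A)x}_{\mathsf X}.
\]
Dividing by \(M\) gives \eqref{cplus}.

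No real obstacle arises. The only point needing care is that the resolvent is a left inverse of \(pI - A\) on \(D(A)\), and not merely a right inverse on \(\mathsf X\); this holds because \(p \in \rho(A)\), which is part of the cited theorem.
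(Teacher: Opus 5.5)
Your proposal is correct and is essentially the paper's argument: the paper states this as an immediate corollary of \cite[Chapter~II, Theorem~1.10]{Engel2001}, which gives \(\Re p > \omega \Rightarrow p \in \rho(A)\) together with the bound \(\norm{(pI-A)^{-1}}_{\mathcal L(\mathsf X)} \leq M/(\Re p - \omega)\). Applying that bound to \(y = (pI - A)x\) with \(x \in D(A)\), as you do, gives \eqref{cplus}.
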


\subsection{Characterization of exact controllability via a rank condition}

To prove Theorem~\ref{th:iff_controllability}, we shall apply Propositions~\ref{prop:Lr_Hautus} and \ref{prop:spectral-bound} with \(A = F_r^\ast\) and \(p\) replaced by its complex conjugate \(\overline p\). To complete the proof of the theorem, we will thus need to relate the term \(\norm{(\overline p I - F_r^\ast)y}_{\mathsf X^{r'}}^{r'}\) from \eqref{eq:Lr_Hautus} and \eqref{cplus} with the characteristic matrix \(H(p)\) of System~\eqref{main_system}. Establishing such a relation is the goal of the next result.

\begin{lemma}
\label{lem:key_approximate_eigenvector}
Let \(r \in (1, +\infty)\). There exists a constant $C > 0$ such that, for every $p\in\mathbb C$ and $v\in\mathbb C^d$, there exists $z=z_{p,v}\in D(F_r^*)$
satisfying
\(
z(0^-)=v
\)
and
\begin{equation}
\label{lepointcle}
\|(\overline pI-F_r^*)z\|_{\mathsf{X}^{r'}}
\leq
C e^{\Lambda_N|\Re p|}
\|H(p)^*v\|_{\mathbb C^d}.
\end{equation}
In addition, we have
\begin{equation}
\label{eq:lower-bound-z}
\norm{z}_{\mathsf X^{r'}}^{r'} \geq J(\Re p) \norm{v}_{\mathbb C^d}^{r'},
\end{equation}
where \(J \colon \mathbb R \to \mathbb R_+^\ast\) is an increasing function only depending on \(r\), \(N\), and \(\Lambda_1\).
\end{lemma}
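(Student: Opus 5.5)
The plan is to construct $z$ explicitly as a piecewise exponential adapted to the jump structure of $D(F_r^\ast)$ from Proposition~\ref{prop:adjoint_F}. An exact eigenvector of $F_r^\ast$ for $\overline p$ would satisfy $-y' = \overline p y$ on each $I_j$. Hence on $I_j$ we would have $y(\theta) = e^{-\overline p \theta} c_j$, with jumps $c_j e^{\overline p\Lambda_j} - c_{j-1}e^{\overline p \Lambda_j} = A_j^\ast v$ and the end condition $c_{N-1} e^{\overline p \Lambda_N} = A_N^\ast v$, where $c_0 = v$. Solving the jumps recursively gives
\[
c_j = v + \sum_{k=1}^j e^{-\overline p\Lambda_k} A_k^\ast v .
\]
The end condition then reads
\[
\Bigl(v + \sum_{k=1}^{N} e^{-\overline p \Lambda_k}A_k^\ast v\Bigr)e^{\overline p\Lambda_N}=\dots
\]
With the correct signs from the jump convention, this is exactly $H(p)^\ast v = 0$ up to the factor $e^{\overline p\Lambda_N}$. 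So I would define $y_0(\theta) = e^{-\overline p\theta} c_j$ on $I_j$, with the $c_j$ given by the recursion that enforces $z(0^-)=v$ and all interior jump conditions. The last condition fails by the defect $e^{\overline p \Lambda_N} H(p)^\ast v$ (up to sign).

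To repair the defect, I would add a correction $w$ supported in $I_{N-1}$, or better in a fixed-length interval $(-\Lambda_N, -\Lambda_N + \ell)$ with $\ell = \min(\Lambda_1, \Lambda_N-\Lambda_{N-1})$. Take $w(\theta) = \chi(\theta) e^{-\overline p(\theta+\Lambda_N)} \delta$, where $\chi$ is a fixed smooth cutoff with $\chi(-\Lambda_N)=1$ vanishing near the right end of the interval. Here $\delta = A_N^\ast v - y_0(-\Lambda_N^+)$, which equals $-e^{\overline p\Lambda_N}H(p)^\ast v$. Then $z = y_0 + w \in D(F_r^\ast)$ and $z(0^-)=v$, since $w$ vanishes near $0$ and near every $-\Lambda_j$ with $j<N$. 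Moreover $(\overline p I - F_r^\ast)y_0 = 0$ and $(\overline p - F_r^\ast) w = \chi' e^{-\overline p(\theta+\Lambda_N)}\delta$. Its $L^{r'}$ norm is bounded by $C\|\delta\| \sup_{\theta}|e^{-\overline p(\theta+\Lambda_N)}|$ over an interval of length $\ell$, which is at most $C e^{\ell|\Re p|}\|\delta\|$. Since $\|\delta\| \le e^{\Lambda_N \Re p}\|H(p)^\ast v\|$, this gives a bound of the type $Ce^{c|\Re p|}\|H(p)^\ast v\|$.

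Getting exactly the exponent $\Lambda_N$ is the delicate bookkeeping. A cleaner choice is to normalize the correction so that the exponential factors cancel. I would choose $w(\theta) = \chi(\theta) e^{-\overline p\theta}\tilde\delta$ with $\tilde\delta = e^{-\overline p\Lambda_N}\delta$, so that $\tilde\delta$ is exactly $\pm H(p)^\ast v$. Then $\|\chi' e^{-\overline p\theta}\tilde\delta\|_{L^{r'}} \le C \sup_{\theta\in[-\Lambda_N,0]} e^{-\Re p\,\theta}\|H(p)^\ast v\| \le C e^{\Lambda_N|\Re p|}\|H(p)^\ast v\|$, which is \eqref{lepointcle}. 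This also avoids needing a small support.

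For the lower bound \eqref{eq:lower-bound-z}, I would restrict to $I_0=(-\Lambda_1,0)$, where $c_0=v$. Choosing the cutoff support disjoint from $I_0$, which is possible inside $I_{N-1}$, gives $z = e^{-\overline p\theta}v$ there. Hence
\[
\|z\|^{r'}_{\mathsf X^{r'}} \ge \|v\|^{r'}\int_{-\Lambda_1}^0 e^{-r'\Re p\,\theta}d\theta = J(\Re p)\|v\|^{r'},
\]
with $J(s)=\int_0^{\Lambda_1} e^{r' s\tau}d\tau$. This $J$ is positive, increasing in $s$, and depends only on $r$ and $\Lambda_1$. The main obstacle is the careful sign and exponential bookkeeping of the jump recursion, verifying that the final defect is exactly a multiple of $H(p)^\ast v$ (using $H(p)^\ast = I_d - \sum_j e^{-\overline p\Lambda_j}A_j^\ast$), and checking that the cutoff support avoids $I_0$ and the interior jump points when $N=1$. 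In that case I would place the cutoff in the left half of $(-\Lambda_1,0)$ and use the right half for $J$.
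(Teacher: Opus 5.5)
Your proposal is correct and is essentially the paper's proof. Both use the piecewise exponential $e^{-\overline p\theta}C_jv$ on $I_j$ with $C_j = I_d - \sum_{\ell\le j} e^{-\overline p\Lambda_\ell}A_\ell^\ast$, correct it near $-\Lambda_N$ by $-\chi(\theta)e^{-\overline p\theta}H(p)^\ast v$, and read off the lower bound near $0$, where $z=e^{-\overline p\theta}v$. One sign needs fixing, as you anticipated: the jump condition is $y(-\Lambda_j^+)-y(-\Lambda_j^-)=A_j^\ast y(0^-)$, so the recursion is $c_j = v - \sum_{k\le j}e^{-\overline p\Lambda_k}A_k^\ast v$, not $v + \sum_{k\le j}e^{-\overline p\Lambda_k}A_k^\ast v$.
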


\begin{proof}
Consider the intervals \(I_j\), \(j \in \{0, \dotsc, N-1\}\) defined in Section~\ref{sec:Fr}. Take \(p \in \mathbb C\) and \(v \in \mathbb C^d\) and, for \(j \in \{0, \dotsc, N\}\), define \(C_j = I_d - \sum_{\ell = 1}^j e^{-\overline p \Lambda_\ell} A_\ell^\ast\), with the convention that the empty sum is the zero matrix. Note that, by construction, \(C_0 = I_d\) and \(C_N = H(p)^\ast\).

We define \(y \in \mathsf X^{r'}\) by setting \(y(t) = e^{-\overline p t} C_j v\) for every \(t \in I_j\) and \(j \in \{0, \dotsc, N-1\}\). Clearly, \(y_{| I_j} \in W^{1,r'}(I_j, \mathbb C^d)\) for every \(j \in \{0, \dotsc, N-1\}\). In addition, \(y(0^-) = v\) and, for \(j \in \{1, \dotsc, N-1\}\), we have
\[
y(-\Lambda_j^+) - y(-\Lambda_j^-) = e^{\overline p \Lambda_j} (C_{j-1} - C_j) v = A_j^\ast v = A_j^\ast y(0^-).
\]
By \eqref{DFe}, the only property missing in order to ensure that $y\in D(F^*_r)$ is that $y(-\Lambda_N^+) = A_N^* v$. Nothing in our construction ensures that this property is true, since we have  
$y(-\Lambda_N^+)= e^{\overline p\Lambda_N}C_{N-1}v.$

However, we can modify $y$ slightly on $I_{N-1}$, so that it verifies the above-mentioned condition. Take $\chi \in C^1([-\Lambda_N,0], [0, 1])$ such that $\chi(-\Lambda_N)=1$ and $\chi=0$ on $[-\Lambda_{N-1}-\varepsilon,0]$, where $\varepsilon>0$ is chosen small enough such that $-\Lambda_{N-1}-\varepsilon>-\Lambda_N$. Now, consider \(z \in \mathsf X^{r^\prime}\) defined by $z(t)=y(t)-\chi(t)e^{-\overline p\,t}C_Nv$ for \(t \in (-\Lambda_N, 0)\). Clearly, \(z_{|I_j} \in W^{1,r'}(I_j,\mathbb C^d)\) for every $j \in \{0, \dotsc, N-1\}$. Moreover, since $\chi=0$ on $[-\Lambda_{N-1}-\varepsilon,0]$, we have $z(0^-)=v$ and $z(-\Lambda_j^+)-z(-\Lambda_j^-) = A_j^*z(0^-)$ for $j \in \{1, \dotsc,N-1\}$.
Since $\chi(-\Lambda_N)=1$, we have
\[
z(-\Lambda_N^+) = y(-\Lambda_N^+) - e^{\overline p\Lambda_N} C_N v  = e^{\overline p\Lambda_N} (C_{N-1} - C_N) v = A_N^*v.
\]
We deduce that $z\in D(F^*_r)$.

On the intervals $I_j$ for $j \in \{0, \dotsc, N-2\}$, we have $\overline p z(t) + z'(t) = 0$, whereas, on the last interval $I_{N-1}$, an explicit computation shows that 
\[
\overline pz(t)+z'(t) = -\chi'(t) e^{-\overline {p} t} C_N v = -\chi'(t) e^{-\overline {p} t} H(p)^* v.
\]
Hence
\[\|(\overline pI-F^*_r)z\|_{\mathsf{X}^{r'}}=\|e^{-\overline {p} \cdot}\chi'(\cdot) H(p)^* v\|_{L^{r'}((-\Lambda_N,-\Lambda_{N-1}), \mathbb C^d)},\]
yielding \eqref{lepointcle} with a constant \(C\) only depending on \(r\), \(\Lambda_N - \Lambda_{N - 1}\), and on \(\norm{\chi'}_{L^{r'}((-\Lambda_N,-\Lambda_{N-1}), \mathbb R)}\). Since \(\chi\) can be constructed in such a way that the norm \(\norm{\chi'}_{L^{r'}((-\Lambda_N,-\Lambda_{N-1}), \mathbb R)}\) only depends on \(r\) and \(\Lambda_N - \Lambda_{N - 1}\), we deduce that \(C\) can be chosen to only depend on \(r\) and \(\Lambda_N - \Lambda_{N - 1}\).

To obtain the last part of the statement, let \(\delta \in (0, \Lambda_1]\) be such that \(\chi\) vanishes on \([-\delta, 0]\). Note that one can take \(\delta = \Lambda_1\) if \(N \geq 2\) and \(\delta = \varepsilon\) if \(N = 1\). We then have
\[
\norm{z}_{\mathsf X^{r'}}^{r'} \geq \int_{-\delta}^0 \norm{y(t)}_{\mathbb C^d}^{r'} \, dt = \int_{-\delta}^0 e^{-r' t \Re p} \, dt \norm{v}_{\mathbb C^d}^{r'}, 
\]
yielding the conclusion with \(J(\xi) = \int_{-\delta}^0 e^{-r' t \xi} \, dt\).
\end{proof}

We are now in a position to prove Theorem~\ref{th:iff_controllability}.

\begin{proof}[Proof of Theorem~\ref{th:iff_controllability}]
Thanks to Remark~\ref{rem:translation}, it suffices to show that conditions \ref{item:cond-rank-H} and \ref{item:cond-rank-AN} from the statement of the theorem are necessary for exact controllability in \(\mathsf X^r\) for \(r \in (1, +\infty)\), and under the additional assumption that System~\eqref{main_system} is exponentially stable in the absence of control.

Take \(r \in (1, +\infty)\) and assume thus that System~\eqref{main_system} is exponentially stable in the absence of control and that it is exactly controllable in \(\mathsf X^r\). Let \(M \geq 1\) and \(\omega > 0\) be such that \(\norm{\Upsilon_r(t)}_{\mathcal L(\mathsf X^r)} \leq M e^{-\omega t}\) for every \(t \geq 0\). Note that, since \eqref{main_system} is exactly controllable in \(\mathsf X^r\), it is also approximately controllable in this space (we refer to \cite{chitour:hal-03827918} for the definition of approximate controllability), and thus, by \cite[Theorem~3.6]{chitour:hal-03827918}, condition \ref{item:cond-rank-AN} from the statement is satisfied. In addition, by \cite[Proposition~3.12]{chitour:hal-03827918}, condition \ref{item:cond-rank-H} is equivalent to the existence of $\alpha>0$ such that
\begin{equation}
\label{coer}
\inf \left\{\norm{g^T H(p)}_{\mathbb{C}^d} + \norm{g^T B}_{\mathbb{C}^m} \suchthat g \in \mathbb{C}^d,\, \norm{g}_{\mathbb{C}^d} = 1 \right\} \geq \alpha, \qquad \text{ for all } p \in \mathbb C.
\end{equation}

We split the complex plane \(\mathbb C\) into four regions \(\mathbb C = \mathcal S_1 \cup \mathcal S_2 \cup \mathcal S_3 \cup \mathcal S_4\), where
\begin{align*}
\mathcal S_1 & = \left\{p \in \mathbb C \suchthat \Re p < \gamma_-\right\}, &
\mathcal S_2 & = \left\{p \in \mathbb C \suchthat \gamma_- \leq \Re p < -\frac{\omega}{2}\right\}, \\
\mathcal S_3 & = \left\{p \in \mathbb C \suchthat -\frac{\omega}{2} \leq \Re p \leq \gamma_+ \right\}, &
\mathcal S_4 & = \left\{p \in \mathbb C \suchthat \Re p > \gamma_+ \right\},
\end{align*}
with \(\gamma_-\) and \(\gamma_+\) to be fixed later. We prove \eqref{coer} separately in each of these regions.

Let us first prove \eqref{coer} in \(\mathcal S_4\) for a suitable choice of \(\gamma_+\). Since \(\Lambda_j > 0\) for every \(j \in \{1, \dotsc, N\}\), there exists \(\gamma_+ > 0\) such that \(\sum_{j=1}^N e^{-\Re p \Lambda_j} \norm{A_j}_{\mathbb C^{d \times d}} < \frac{1}{2}\) for every \(p \in \mathbb C\) with \(\Re p > \gamma_+\). Hence, for every \(p \in \mathcal S_4\) and $g\in\mathbb C^d$ with $\|g\|_{\mathbb{C}^d}=1$, we have
\[
\|g^T H(p)\|_{\mathbb{C}^d} = \norm*{g^T-\sum_{j=1}^N e^{-p\Lambda_j} g^T A_j}_{\mathbb{C}^d} 
\ge
1 - \sum_{j=1}^N e^{-\Re p\Lambda_j} \norm{A_j}_{\mathbb C^{d \times d}} \ge \frac{1}{2},
\]
yielding \eqref{coer} for \(p \in \mathcal S_4\).

We now establish \eqref{coer} in \(\mathcal S_1\) for a suitable choice of \(\gamma_-\). We first remark that, since $\rank [A_N, B] = d$, the map $g \mapsto \big(g^TA_N,\ g^TB\big)$ is injective, and thus the compactness of the unit sphere of $\mathbb C^d$ yields the existence of $\kappa>0$ such that
\begin{equation}\label{eq:ANB_coercive}
\|g^TA_N\|_{\mathbb{C}^d}+\|g^TB\|_{\mathbb{C}^m}\ge \kappa,
\quad \text{ for all } g \in \mathbb C^d \text{ with } \|g\|_{\mathbb{C}^d}=1.
\end{equation}
We write
\[
\begin{aligned}
g^TH(p)
=
g^T-\sum_{j=1}^N e^{-p\Lambda_j}g^TA_j 
=
-e^{-p\Lambda_N}g^T\left(A_N+R(p)\right),
\end{aligned}
\]
with $R(p)\coloneqq 
\sum_{j=1}^{N-1}e^{p(\Lambda_N-\Lambda_j)}A_j
-
e^{p\Lambda_N}I_d$. Since $\Lambda_N-\Lambda_j>0$ for every $j \in \{1, \dotsc, N-1\}$, there exists $\gamma_- < -\frac{\omega}{2}$ such that, for every $p \in \mathbb C$ with $\Re p < \gamma_-$, we have $\norm{R(p)}_{\mathbb C^{d \times d}} \le \frac{\kappa}{2}$. In addition, since \(\gamma_- < 0\), we have \(e^{-\Re p \Lambda_N} \geq 1\) whenever \(\Re p < \gamma_-\). Thus, for every \(p \in \mathcal S_1\) and \(g \in \mathbb C^d\) with $\|g\|_{\mathbb{C}^d}=1$, we have
\[
\begin{aligned}
\|g^TH(p)\|_{\mathbb{C}^d}+\|g^TB\|_{\mathbb{C}^m}
&=
e^{-\Re p\Lambda_N}\|g^T(A_N+R(p))\|_{\mathbb{C}^d}+\|g^TB\|_{\mathbb{C}^m} \\
&\ge \|g^TA_N\|_{\mathbb{C}^d}+\|g^TB\|_{\mathbb{C}^m}-\norm{R(p)}_{\mathbb C^{d \times d}} \geq
\frac{\kappa}{2},
\end{aligned}
\]
yielding \eqref{coer} for \(p \in \mathcal S_1\).

To prove \eqref{coer} in \(\mathcal S_2\), recall that, by Remark~\ref{remk:adjoint-semigroup}, \(F_r^\ast\) is the generator of the adjoint semigroup \((\Upsilon_r(t)^\ast)_{t \geq 0}\) in \(\mathsf X^{r'}\) and, since \(\mathsf X^r\) is reflexive, we have that \(\norm{\Upsilon_r(t)}_{\mathcal L(\mathsf X^r)} = \norm{\Upsilon_r(t)^\ast}_{\mathcal L(\mathsf X^{r^\prime})}\) for every \(t \geq 0\), so that \((\Upsilon_r(t)^\ast)_{t \geq 0}\) is also exponentially stable. By Theorem~\ref{th_system_abstract}, \(G_r\) is an admissible control operator for \(F_r\). Using duality and Remark~\ref{remk:infinite-finite}, we deduce that \(G_r^\ast\) is an infinite-time \(L^{r'}\)-admissible observation operator for \(F_r^\ast\). The exact controllability assumption on System~\eqref{main_system} yields \eqref{eq:observability-inequality}, which implies that \((F_r^\ast, G_r^\ast)\) is exactly observable in infinite time. Hence, all assumptions of Proposition~\ref{prop:Lr_Hautus} are satisfied in the case \(q=r'\), \(A=F_r^*\), and \(C=G_r^*\), and applying \eqref{eq:Lr_Hautus} with \(p\) replaced by \(\overline p\) and using the explicit expression of \(G_r^\ast\) from \eqref{eq:def-Gr}, we deduce that there exists $\mu > 0$ such that, for every $y \in D(F_r^*)$ and $p \in \mathbb C$ with $\Re p < 0$, we have
\begin{equation}
\label{eq:Hautus_Yamamoto_explicit}
\frac{1}{\abs{\Re p}^{r'}} \norm{(\overline p I - F_r^\ast) y}_{\mathsf X^{r'}}^{r'} + \frac{1}{\abs{\Re p}} \norm*{B^* y(0^-)}_{\mathbb C^m}^{r'} \geq \mu \norm*{y}_{\mathsf X^{r'}}^{r'}.
\end{equation}
Take \(p \in \mathcal S_2\) and \(g \in \mathbb C^d\) with \(\norm{g}_{\mathbb C^d} = 1\). Let \(y \in D(F_r^\ast)\) be the element obtained by applying Lemma~\ref{lem:key_approximate_eigenvector} with \(v = g\). Then, combining \eqref{lepointcle}, \eqref{eq:lower-bound-z}, and \eqref{eq:Hautus_Yamamoto_explicit}, we obtain
\[
\frac{C^{r'} e^{r' \Lambda_N \abs{\Re p}}}{\abs{\Re p}^{r'}} \norm{H(p)^\ast g}_{\mathbb C^d}^{r'} + \frac{1}{\abs{\Re p}} \norm*{B^* g}_{\mathbb C^m}^{r'}
 \geq \mu J(\gamma_-),
\]
yielding \eqref{coer} in \(\mathcal S_2\) thanks to the upper and lower bounds of \(\Re p\) in this set.

Finally, to prove \eqref{coer} in \(\mathcal S_3\), we apply Proposition~\ref{prop:spectral-bound} with \(A = F_r^\ast\) and \(p\) replaced by \(\overline p\) to obtain that, for every \(y \in D(F_r^\ast)\) and \(p \in \mathbb C\) with \(\Re p > -\omega\), we have
\begin{equation}
\label{eq:spectral-lower-bound}
\frac{1}{\Re p + \omega} \norm{(\overline p I - F_r^\ast) y}_{\mathsf X^{r'}} \geq \frac{1}{M} \norm{y}_{\mathsf X^{r'}}.
\end{equation}
Take \(p \in \mathcal S_3\) and \(g \in \mathbb C^d\) with \(\norm{g}_{\mathbb C^d} = 1\) and, as before, let \(y \in D(F_r^\ast)\) be the element obtained by applying Lemma~\ref{lem:key_approximate_eigenvector} with \(v = g\). Combining now \eqref{lepointcle}, \eqref{eq:lower-bound-z}, and \eqref{eq:spectral-lower-bound}, we obtain
\[
\frac{C e^{\Lambda_N \abs{\Re p}}}{\Re p + \omega} \norm{H(p)^\ast g}_{\mathbb C^d} \geq \frac{J(-\frac{\omega}{2})^{1/r'}}{M},
\]
which yields \eqref{coer} in \(\mathcal S_3\) using the upper and lower bounds of \(\Re p\) in this set.
\end{proof}

\begin{remark}
It is worth emphasizing that the necessity argument in the proof of Theorem~\ref{th:iff_controllability} for $r\in(1,+\infty)$ does not extend to the endpoint case $r=1$, since one cannot apply Proposition~\ref{prop:Lr_Hautus} with $q = r' = +\infty$. As a result, the necessity of the two rank conditions appearing in Theorem~\ref{th:iff_controllability} for $L^1$-exact controllability currently relies solely on the proof provided in \cite{chitour:hal-03827918}.
\end{remark}

\section{Rapid stabilization}
\label{sec:stabilization}

Following the framework introduced in \cite{Komornik} and further developed in \cite{Urquiza2005Rapid, Vest2013Rapid}, we now investigate the rapid stabilization of System~\eqref{main_system} in the $L^2$ setting through its abstract formulation \eqref{eq:abstract}. More precisely, we wish to understand whether, for every
\(\omega>0\), one can find
a linear state feedback law
\(u(t)=K_\omega z(t)\) such that
the corresponding closed-loop system is well posed
in \(\mathsf X^2\) and
exponentially stable with an exponential rate of \(-\omega\).
Here, the feedback law is understood in the sense of
\cite{Vest2013Rapid}, as recalled below.

Let us start by recalling the framework of \cite{Komornik, Urquiza2005Rapid, Vest2013Rapid}. Let \(\mathsf H\) and \(\mathsf U\) be Hilbert spaces\footnote{Throughout this section, we identify \(\mathsf H\) and \(\mathsf U\) with their duals.}.
We consider the abstract control system
\begin{equation}
\label{eq:abstract_system}
\left\{
\begin{aligned}
x'(t) & = \mathcal{A}x(t) + \mathcal{B}u(t), \\
x(0) & = x_0,
\end{aligned}
\right.
\end{equation}
where \(\mathcal{A}\colon D(\mathcal{A})\subset \mathsf H\to \mathsf H\) is the generator of a semigroup $(e^{\mathcal A t})_{t \ge 0}$, \(\mathcal{B}\) is a control operator, and \(u\) denotes the control input, which takes values in \(\mathsf U\).

The rapid stabilization result from \cite{Vest2013Rapid} relies on the following assumptions.

\begin{hypothesis}
\label{hyp_Komornik}
\begin{enumerate}[label=(H\arabic*)]
\item
\label{hyp_h1}
The operator $\mathcal{A}$ generates a $C_0$-group on $\mathsf H$;

\item
\label{hyp_h2}
$\mathcal{B} \in \mathcal{L}(\mathsf U, D(\mathcal{A}^*)')$;

\item
\label{hyp_h3}
\(\mathcal B^\ast\) is an admissible observation operator for \(\mathcal A^\ast\), that is, for every $T'>0$, there exists a constant $c' > 0$ such that
\begin{equation}
\label{eq_label_h3}
\|\mathcal{B}^* e^{\mathcal A^\ast \cdot} y \|_{L^2((0,T'), \mathsf U)}
\le c' \|y\|_{\mathsf H},
\quad \forall y \in D(\mathcal{A}^*);
\end{equation}

\item
\label{hyp_h4}
There exists $T>0$ such that System~\eqref{eq:abstract_system} is exactly controllable in time $T$.
\end{enumerate}
\end{hypothesis}

For every $\omega>0$, set $T_\omega \coloneqq T + \frac{1}{2\omega}$
and consider the weight function $e_\omega \colon [0,T_\omega] \to \mathbb{R}_+$ defined by
\[
e_\omega(s) \coloneqq
\begin{cases}
e^{-2\omega s}, & \text{if } 0 \leq s \leq T, \\
2\omega e^{-2\omega T}(T_\omega - s), & \text{if } T \leq s \leq T_\omega.
\end{cases}
\]
Based on this weight, we define the operator $\Lambda_\omega \colon \mathsf H \to \mathsf H$ by setting, for \(x\) and \(y\) in \(\mathsf H\),
\[
\langle \Lambda_\omega x, y \rangle_{\mathsf H}
=
\int_0^{T_\omega} e_\omega(s)\,
\langle \mathcal{B}^* e^{-s\mathcal{A}^*} x,\; \mathcal{B}^* e^{-s\mathcal{A}^*} y \rangle_{\mathsf U}\,ds.
\]
The observation terms in this formula are understood through their admissible extensions, which are available also for negative times since $\mathcal A$ generates a group.
The operator $\Lambda_\omega$ is self-adjoint, positive definite, and boundedly invertible, with inverse denoted by $\Lambda_\omega^{-1}$.
The corresponding feedback operator is then defined by
\begin{equation*}
K_\omega \coloneqq -\mathcal B^*\Lambda_\omega^{-1}
\colon D_\omega\to\mathsf U,
\qquad \text{ where } D_\omega\coloneqq\Lambda_\omega D(\mathcal A^*).
\end{equation*}
In order to provide a precise sense to the feedback law \(u(t) = K_\omega x(t)\) applied to \eqref{eq:abstract_system}, we need to consider the canonical extension  $\widetilde{\mathcal A}\in\mathcal L(\mathsf H,D(\mathcal A^*)')$ of $\mathcal A$, which is an operator defined on the whole space \(\mathsf H\). This allows one to consider the closed-loop operator defined by
\[
\mathcal A_\omega x
\coloneqq\widetilde{\mathcal A}x
-\mathcal B\mathcal B^*\Lambda_\omega^{-1}x,
\qquad D(\mathcal A_\omega)=D_\omega,
\]
as well as the associated closed-loop system
\begin{equation}
\label{closed_loop_system}
x'(t)=\mathcal A_\omega x(t).
\end{equation}
The rapid stabilization result we will rely on is the following.

\begin{theorem}[{\cite[Theorems~3.2 and~3.4, Section~4]{Vest2013Rapid}}]
\label{lemma_H}
If System~\eqref{eq:abstract_system} verifies Hypotheses~\ref{hyp_h1}--\ref{hyp_h4}, then it is rapidly stabilizable in $\mathsf H$. More precisely, for every $\omega>0$, the operator $\mathcal A_\omega$, with domain $D_\omega$, generates a $C_0$-group $(S_\omega(t))_{t\in\mathbb R}$ on $\mathsf H$, and the solutions of the closed-loop system~\eqref{closed_loop_system} satisfy the following: there exists a constant $c > 0$ such that, for every initial datum $x_0 \in \mathsf H$, the corresponding mild solution \(x\) of \eqref{closed_loop_system} is given by $x(t)=S_\omega(t)x_0$ and satisfies
\begin{equation}
\|x(t)\|_{\mathsf H} \le c\, e^{-\omega t} \|x_0\|_{\mathsf H}, \quad \forall t \ge 0.
\end{equation}
\end{theorem}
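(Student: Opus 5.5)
This is a cited result (Theorems 3.2 and 3.4 and Section 4 of Vest's paper), so the plan is to reconstruct the argument of \cite{Komornik, Urquiza2005Rapid, Vest2013Rapid}, the weighted-Gramian method.

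\textbf{Step 1: the weighted Gramian is an isomorphism.} I would first show that $\Lambda_\omega$ is bounded, self-adjoint and coercive. Boundedness follows from \ref{hyp_h3}, extended to negative times by the group property \ref{hyp_h1}. Coercivity uses \ref{hyp_h4}. Exact controllability in time $T$ is equivalent, by the duality recalled before \eqref{eq:observability-inequality} (here in the Hilbert setting), to the observability inequality
\[
\int_0^T \|\mathcal B^\ast e^{-s\mathcal A^\ast}y\|_{\mathsf U}^2\,ds \geq c\|y\|_{\mathsf H}^2 .
\]
This is obtained for the backward group after composing with $e^{T\mathcal A^\ast}$, which is boundedly invertible. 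Since $e_\omega \geq e^{-2\omega T}$ on $[0,T]$, we get $\langle\Lambda_\omega y,y\rangle \geq c\,e^{-2\omega T}\|y\|^2$. Lax--Milgram then gives $\Lambda_\omega^{-1}\in\mathcal L(\mathsf H)$.

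\textbf{Step 2: a Lyapunov-type identity for $\Lambda_\omega$.} For $y\in D(\mathcal A^\ast)$, I would differentiate $s\mapsto e_\omega(s)\|\mathcal B^\ast e^{-s\mathcal A^\ast}y\|^2$ and integrate by parts over $[0,T_\omega]$. The boundary terms are $e_\omega(T_\omega)=0$ and $e_\omega(0)=1$. This gives, in the sense of $D(\mathcal A^\ast)'\times D(\mathcal A^\ast)$,
\[
\mathcal A\Lambda_\omega+\Lambda_\omega\mathcal A^\ast=-\mathcal B\mathcal B^\ast+\int_0^{T_\omega} e_\omega'(s)\,e^{-s\mathcal A}\mathcal B\mathcal B^\ast e^{-s\mathcal A^\ast}\,ds .
\]
Because $e_\omega'\leq -2\omega e_\omega$ on $[0,T)$ and $e_\omega'<0$ on $(T,T_\omega)$, the last integral is bounded above by $-2\omega\Lambda_\omega$. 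So $\mathcal A\Lambda_\omega+\Lambda_\omega\mathcal A^\ast+\mathcal B\mathcal B^\ast\leq-2\omega\Lambda_\omega$.

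\textbf{Step 3: generation of a group and the decay estimate.} I would conjugate by $\Lambda_\omega$. Formally, $\Lambda_\omega^{-1}\mathcal A_\omega\Lambda_\omega=\Lambda_\omega^{-1}(\widetilde{\mathcal A}\Lambda_\omega-\mathcal B\mathcal B^\ast)$ on $D(\mathcal A^\ast)$. By Step 2 this equals $-\mathcal A^\ast+\Lambda_\omega^{-1}P$, where $P:=\int e_\omega'\,(\dots)$ is a \emph{bounded} self-adjoint operator; boundedness follows again from admissibility. Hence $\mathcal A_\omega$ is similar, via $\Lambda_\omega$, to $-\mathcal A^\ast$ plus a bounded perturbation. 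By \ref{hyp_h1} and bounded perturbation theory, $-\mathcal A^\ast+\Lambda_\omega^{-1}P$ generates a $C_0$-group, and therefore $\mathcal A_\omega$ with domain $D_\omega=\Lambda_\omega D(\mathcal A^\ast)$ does too.

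For the decay, take the Lyapunov functional $V(x)=\langle\Lambda_\omega^{-1}x,x\rangle$, which is equivalent to $\|x\|^2$ by Step 1. For $x_0$ in $D_\omega$, set $y=\Lambda_\omega^{-1}x$. Step 2 then gives
\[
\tfrac{d}{dt}V=\langle(\mathcal A\Lambda_\omega+\Lambda_\omega\mathcal A^\ast-2\mathcal B\mathcal B^\ast)y,y\rangle\leq -2\omega V-\|\mathcal B^\ast y\|^2 .
\]
Hence $V(x(t))\leq e^{-2\omega t}V(x_0)$, and density gives the estimate with $c=(\|\Lambda_\omega\|\,\|\Lambda_\omega^{-1}\|)^{1/2}$.

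\textbf{Main obstacle.} The hard part is making Steps 2--3 rigorous with the unbounded $\mathcal B$. One must show that $\mathcal B^\ast\Lambda_\omega^{-1}$ is well defined on $D_\omega$, and that the formal identity holds in the extrapolation space, so that the Lyapunov computation is justified on $D_\omega$. I would try to handle this by working throughout at the level of $y=\Lambda_\omega^{-1}x\in D(\mathcal A^\ast)$, where all terms are defined via admissibility.
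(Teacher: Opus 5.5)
The paper gives no proof of this theorem; it cites Vest. Your route is the Komornik--Vest weighted-Gramian argument: coercivity of $\Lambda_\omega$ from backward observability, a Lyapunov identity, similarity of $\mathcal A_\omega$ to $-\mathcal A^\ast$ plus a bounded operator, and the functional $\langle\Lambda_\omega^{-1}x,x\rangle$. That is the right approach, but Step~2 has a sign error that, as written, makes Step~3 fail. Integrating by parts, the boundary term at $s=0$ enters with a \emph{plus} sign. For $x,y\in D(\mathcal A^\ast)$,
\[
\langle\Lambda_\omega\mathcal A^\ast x,y\rangle+\langle\Lambda_\omega x,\mathcal A^\ast y\rangle=\langle\mathcal B^\ast x,\mathcal B^\ast y\rangle_{\mathsf U}+\langle Px,y\rangle,
\]
that is, $\widetilde{\mathcal A}\Lambda_\omega+\Lambda_\omega\mathcal A^\ast=\mathcal B\mathcal B^\ast+P$ in $D(\mathcal A^\ast)'$. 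In finite dimensions with no weight this is the familiar $A\Lambda+\Lambda A^\ast=BB^\ast-e^{-\tau A}BB^\ast e^{-\tau A^\ast}$.

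This plus sign is exactly what makes the feedback term cancel. For $y\in D(\mathcal A^\ast)$ you get $\mathcal A_\omega\Lambda_\omega y=\widetilde{\mathcal A}\Lambda_\omega y-\mathcal B\mathcal B^\ast y=-\Lambda_\omega\mathcal A^\ast y+Py\in\mathsf H$, hence $\Lambda_\omega^{-1}\mathcal A_\omega\Lambda_\omega=-\mathcal A^\ast+\Lambda_\omega^{-1}P$. With your sign you would get an extra term $-2\Lambda_\omega^{-1}\mathcal B\mathcal B^\ast$, which is not bounded on $\mathsf H$, and the bounded-perturbation argument would collapse. Your Lyapunov bound $-2\omega V-\|\mathcal B^\ast y\|^2$ also matches the correct identity; yours would give $-3\|\mathcal B^\ast y\|^2$. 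So Steps~2 and~3 contradict each other as written. The corrected statement is $\mathcal A\Lambda_\omega+\Lambda_\omega\mathcal A^\ast-\mathcal B\mathcal B^\ast=P\le-2\omega\Lambda_\omega$.

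There is a second, smaller gap. The fact that $e_\omega'<0$ on $(T,T_\omega)$ does not by itself give $P\le-2\omega\Lambda_\omega$; you also need $e_\omega'\le-2\omega e_\omega$ on that interval. It holds because $e_\omega'(s)=-2\omega e^{-2\omega T}$ while $-2\omega e_\omega(s)=-4\omega^2e^{-2\omega T}(T_\omega-s)$, and $2\omega(T_\omega-s)\le2\omega(T_\omega-T)=1$. This is precisely why $T_\omega=T+\frac{1}{2\omega}$. With negativity alone you only bound $P$ by $-2\omega$ times the $[0,T]$ part of $\Lambda_\omega$. That still gives rapid stabilization, but at a possibly smaller rate $c\,\omega$ with $c\in(0,1]$, not the rate $\omega$ in the statement.

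With these two fixes, the argument is a correct proof. To justify differentiating under the integral, first take $x,y\in D((\mathcal A^\ast)^2)$, then extend by density in the graph norm.
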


For $x_0\in D_\omega$, the corresponding control is given pointwise by $u(t)=K_\omega S_\omega(t)x_0$. For general $x_0\in\mathsf H$, this formula is understood by continuous extension from $D_\omega$ to $\mathsf H$ as an $L^2_{\mathrm{loc}}([0,+\infty),\mathsf U)$-valued map. Moreover, $K_\omega\in\mathcal L(D_\omega,\mathsf U)$ when $D_\omega=D(\mathcal A_\omega)$ is endowed with its graph norm.

Our main result in this section is the following, which uses Theorem~\ref{lemma_H} to provide a rapid stabilization result for System~\eqref{main_system}.

\begin{theorem}
\label{main_result1} System~\eqref{main_system} is rapidly stabilizable in $\mathsf{X}^2$  if 
\begin{enumerate}[1)]
\item \label{assumption1_stab}$\rank \left[M,B\right]=d$ for every  $M\in \overline{H(\mathbb{C})}$,
\item \label{assumption2_stab} $\rank [A_N,B]=d$.
\end{enumerate}
\end{theorem}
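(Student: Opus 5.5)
The plan is to apply Theorem~\ref{lemma_H} with $\mathsf H=\mathsf X^2$, $\mathsf U=\mathbb C^m$, $\mathcal A=F_2$ and $\mathcal B=G_2$. The hypotheses \ref{hyp_h2}--\ref{hyp_h4} are essentially already available.
\begin{itemize}
\item \ref{hyp_h2} holds by construction, since $G_2\in\mathcal L(\mathbb C^m,D(F_2^\ast)')$ by \eqref{defG}.
\item \ref{hyp_h3} follows from the admissibility of $G_2$ given by Theorem~\ref{th_system_abstract}. By the standard duality in the Hilbert setting, $G_2^\ast$ is then an admissible observation operator for $F_2^\ast$, which is \eqref{eq_label_h3}.
\item \ref{hyp_h4} is exactly Proposition~\ref{prop_con} with $r=2$, since conditions \ref{assumption1_stab} and \ref{assumption2_stab} are precisely its assumptions.
\end{itemize}
The real issue is \ref{hyp_h1}. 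The semigroup $(\Upsilon_2(t))_{t\ge0}$ extends to a group only when $A_N$ is invertible, and condition \ref{assumption2_stab} does not give this.

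The first step is therefore a preliminary feedback that makes the last matrix invertible. Since $\rank[A_N,B]=d$, there exists $K\in\mathbb C^{m\times d}$ with $A_N+BK$ invertible. To see this, choose columns of $B$ spanning a complement of $\Ran A_N$, and let $K$ map a complement of $\ker A_N$ to zero and $\ker A_N$ bijectively onto the coefficients of those columns.

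Setting $u(t)=Kx(t-\Lambda_N)+v(t)$ turns \eqref{main_system} into a system of the same form, with $A_N$ replaced by $\widetilde A_N=A_N+BK$ and new control $v$. Its trajectories are those of the original system with this $u$. Note that $u\in L^2_{\rm loc}$ whenever $x$ is, because $x(t-\Lambda_N)$ is a shift of the solution itself and not a pointwise trace of the state. The characteristic matrix becomes $\widetilde H(p)=H(p)-e^{-p\Lambda_N}BK$.

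Next I check that the rank conditions survive this change.
\begin{itemize}
\item Condition \ref{assumption2_stab}: $\rank[\widetilde A_N,B]=d$ trivially, since $\widetilde A_N$ is invertible.
\item Condition \ref{assumption1_stab}: take $\widetilde M=\lim \widetilde H(p_n)$. If $\Re p_n\to-\infty$ along a subsequence, then writing $\widetilde H(p)=-e^{-p\Lambda_N}(\widetilde A_N+R(p))$ as in the proof of Theorem~\ref{th:iff_controllability} gives $\norm{\widetilde H(p_n)}\to+\infty$, because $\widetilde A_N$ is invertible and $R(p_n)\to0$. This contradicts convergence. Hence $e^{-p_n\Lambda_N}$ stays bounded, and along a subsequence $e^{-p_n\Lambda_N}\to c$ and $H(p_n)\to M\in\overline{H(\mathbb C)}$. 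Then $\widetilde M=M-cBK$, and $\rank[\widetilde M,B]=\rank[M,B]=d$ because column operations by $B$ do not change the rank.
\end{itemize}
Thus the modified system satisfies both conditions, and by Proposition~\ref{prop_con} it is exactly controllable in $\mathsf X^2$.

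For the modified system, \ref{hyp_h1} holds. Invertibility of $\widetilde A_N$ lets us solve \eqref{main_system} backwards in time via
\[
x(t-\Lambda_N)=\widetilde A_N^{-1}\Bigl(x(t)-\sum_{j<N}A_jx(t-\Lambda_j)\Bigr).
\]
This gives backward well-posedness and hence a $C_0$-group on $\mathsf X^2$. Its generator is $F_2$ with $A_N$ replaced by $\widetilde A_N$, by Proposition~\ref{prop:generator-Fr}. Theorem~\ref{lemma_H} then gives, for each $\omega>0$, a feedback $v=K_\omega z$ under which the modified system decays like $ce^{-\omega t}$ in $\mathsf X^2$. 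The closed loop of the original system uses $u(t)=Kx(t-\Lambda_N)+K_\omega x_t$. Its trajectories coincide with those of the stabilized modified system, so the original system is rapidly stabilizable.

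I expect the main obstacle to be the rigorous treatment of \ref{hyp_h1} and of the composite feedback. One has to check that the backward-solved history is genuinely a $C_0$-group on $L^2$, using the same iteration on intervals of length $\Lambda_N-\Lambda_{N-1}$ as in Proposition~\ref{prop:exist-unique-formula}. One also has to make precise that the combined law $u=Kx(\cdot-\Lambda_N)+K_\omega x_t$ is an admissible feedback for the original system in the sense of \cite{Vest2013Rapid}. I would handle the latter by defining the stabilized system entirely through the modified one and transporting it back via the explicit trajectory correspondence.
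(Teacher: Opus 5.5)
Your proposal is correct and follows the paper's own proof: you use a preliminary feedback $u(t)=Kx(t-\Lambda_N)+v(t)$ to make $A_N+BK$ invertible, get the group property \ref{hyp_h1} by solving backwards in time, obtain \ref{hyp_h2}--\ref{hyp_h4} from the construction of $G_2$, Theorem~\ref{th_system_abstract} and exact controllability, then apply Theorem~\ref{lemma_H} and transport the feedback back to System~\eqref{main_system}. The only differences are minor and both are valid: you build $K$ directly from complements of $\ker A_N$ and $\Ran A_N$ instead of via $A_NC+BD=I_d$, and you re-check the rank conditions for the modified system with a correct limit argument, whereas the paper simply transfers exact controllability through the one-to-one correspondence of controls and trajectories.
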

\begin{proof}
Assume that conditions~\ref{assumption1_stab} and \ref{assumption2_stab} of the statement of the theorem hold. The operator \(F_2\) need not generate a \(C^0\)-group on \(\mathsf X^2\), so we first introduce a preliminary feedback before applying Theorem~\ref{lemma_H}.
Condition~\ref{assumption2_stab} implies that there exists a matrix \(K\) such that \(\widehat A_N\coloneqq A_N+BK\) is invertible. Indeed, this condition yields matrices \(C\in\mathbb C^{d\times d}\) and \(D\in\mathbb C^{m\times d}\) such that $A_NC+BD=I_d$. For every $\varepsilon>0$ except a finite number of values, set 
\(K_\varepsilon=D(C+\varepsilon I_d)^{-1}\). Then, one checks that $A_N+BK_\varepsilon = (I_d+\varepsilon A_N)(C+\varepsilon I_d)^{-1},$ and hence one chooses $K=K_{\bar\varepsilon}$ for some $\bar\varepsilon>0$ such that \(I_d+\varepsilon A_N\) is invertible.

Consider the system
\begin{equation}
\label{system_lin_l2bis}
x(t)=\sum_{j=1}^{N-1}A_jx(t-\Lambda_j)
+\widehat A_Nx(t-\Lambda_N)+Bv(t),\qquad t\ge0,
\end{equation}
with the same initial condition as System~\eqref{main_system}.
It is obtained from \eqref{main_system} by setting
\(u(t)=v(t)+K_0x(t-\Lambda_N)\).
For each fixed initial condition and each \(T>0\), this relation gives a one-to-one correspondence between the controls in \(L^2((0,T),\mathbb C^m)\) of the two systems, with the same state trajectories.
Indeed, both systems have solutions in \(L^2_{\rm loc}((-\Lambda_N,+\infty),\mathbb C^d)\) by Proposition~\ref{prop:exist-unique-formula}, so the delayed term belongs to \(L^2((0,T),\mathbb C^m)\).
Since System~\eqref{main_system} is exactly controllable by Proposition~\ref{prop_con}, so is System~\eqref{system_lin_l2bis}.

Let \(\widehat F_2\) and \(\widehat G_2\) denote the state and control operators associated with \eqref{system_lin_l2bis} through the construction of Section~\ref{sec:representation}.
We now check the four assumptions of Theorem~\ref{lemma_H} with
\(\mathcal A=\widehat F_2\), \(\mathcal B=\widehat G_2\),
\(\mathsf H=\mathsf X^2\), and \(\mathsf U=\mathbb C^m\). We first prove \ref{hyp_h1}. Let \(x\) be a solution of \eqref{system_lin_l2bis} with null control and initial condition \(x_0\in\mathsf X^2\).
Writing \(x_t=\widehat\Upsilon_2(t)x_0\), we have, for a.e.\ \(t\ge0\),
\begin{equation}
\label{eq:forward}
x(t)=\sum_{j=1}^{N-1}A_jx(t-\Lambda_j)
+\widehat A_Nx(t-\Lambda_N).
\end{equation}
Define \(y(t)=x(-t-\Lambda_N)\) for \(t\le0\). 
Since \(\widehat A_N\) is invertible, 
\eqref{eq:forward} yields, for a.e.\ \(t\le0\),
\begin{equation}
\label{eq:backward-change}
y(t)=\widehat A_N^{-1}y(t-\Lambda_N)
-\sum_{j=1}^{N-1}\widehat A_N^{-1}A_j
y(t-\Lambda_N+\Lambda_j).
\end{equation}
After reordering the delays, this is again a difference delay equation of the form \eqref{main_system}.
By Proposition~\ref{prop:exist-unique-formula}, \(y\) therefore extends uniquely to a function in \(L^2_{\rm loc}(\mathbb R,\mathbb C^d)\) satisfying \eqref{eq:backward-change} almost everywhere on \(\mathbb R\).
Reversing time again gives a unique extension of \(x\) satisfying \eqref{eq:forward} a.e.\ on \(\mathbb R\).
More precisely, let \(R\in\mathcal L(\mathsf X^2)\) be the reflection operator
\[
(Rz)(\theta)=z(-\theta-\Lambda_N),
\qquad \theta\in(-\Lambda_N,0),
\]
and let \((\widetilde\Upsilon_2(t))_{t\ge0}\) be the \(C^0\)-semigroup associated with \eqref{eq:backward-change}.
The backward evolution of the histories is given by
$\widehat\Upsilon_2(-t)
=R\widetilde\Upsilon_2(t)R$ for $t\ge0$.
Uniqueness of the forward and backward solutions shows that this defines a \(C^0\)-group extending \((\widehat\Upsilon_2(t))_{t\ge0}\).
Thus \(\widehat F_2\) satisfies \ref{hyp_h1}.
As for \ref{hyp_h2}, it follows directly from the definition of \(\widehat G_2\) while \ref{hyp_h3} follows from Theorem~\ref{th_system_abstract}, applied to System~\eqref{system_lin_l2bis}, and the duality between admissibility for control and observation operators.
Finally, \ref{hyp_h4} follows from the exact controllability of \eqref{system_lin_l2bis} established above.

Theorem~\ref{lemma_H} therefore applies to the transformed system.
For every \(\omega>0\), it provides a feedback law
\(v(t)=\widehat K_\omega x_t\) yielding a well-posed closed-loop system in \(\mathsf X^2\) with exponential decay rate \(-\omega\).
The corresponding feedback law for System~\eqref{main_system} is $u(t)=\widehat K_\omega x_t+K_0x(t-\Lambda_N)$
understood along the closed-loop trajectories.
The two closed-loop systems have the same state trajectories, and hence the same exponential decay estimate.
The conclusion follows.
\end{proof}

\begin{remark}
The rapid stabilization result presented above remains intrinsically Hilbertian, since the stabilization construction in \cite{Komornik} uses
a quadratic Gramian and the Hilbert-space identification of a space with its
dual. It does not seem immediate to achieve rapid stabilization in $\mathsf{X}^r$ for $r\neq 2$. 
\end{remark}

\section{Linear one-dimensional hyperbolic systems}
\label{sec:PDE}

The aim of this section is to show how the results of Sections~\ref{sec:controllability} and \ref{sec:stabilization} can be adapted to cover also linear one-dimensional hyperbolic systems. We start by describing the system of hyperbolic PDEs that we consider. Let $\ell$, $n$, and $m$ be positive integers with \(\ell \leq n\), $T>0$ be a positive time, and $r\in [1,+\infty)$. Fix $B \in \mathbb C^{n \times m}$ and $K = (K_{ij})_{i, j \in \{1, \dotsc, n\}} \in \mathbb C^{n \times n}$ and consider 
the set $\Omega$ defined by
\begin{equation} 
  \label{eq:Omega}
    \Omega = \{(t,x)\in \mathbb{R}^2 \mid 0<x<1\text{ and } t > 0\}.
\end{equation}
For $i \in \{1, \dotsc, n\}$, let $\lambda_i\colon[0,1]\to \mathbb{R}$ be an essentially bounded measurable function such that $ \lambda_i(x) \ge \varepsilon$ for some $\varepsilon>0$ and almost every $x \in [0,1]$. We consider the hyperbolic system
\begin{numcases}{} 
\label{eq_hyp}
\partial_t R(t,x)+\Lambda(x) \partial_x R(t,x)=0, & \((t,x)\in \Omega,\) \\
\label{eq_bord}
\begin{pmatrix}
R^+(t,0) \\
R^{-}(t,1)
\end{pmatrix}=K \begin{pmatrix}
R^+(t,1) \\
R^{-}(t,0)
\end{pmatrix}+Bu(t), & \(t \ge 0,\)
\end{numcases}
where \(R\) is the state of the system, taking values in \(\mathbb C^n\), $u(\cdot) \in \mathsf{U}^r$ denotes the control, the matrix \(\Lambda(x)\) is defined for \(x \in (0, 1)\) by
\begin{align}
\label{Def_lambda}
\Lambda (x) & = \begin{pmatrix}
     \Lambda^+(x) & 0 \\
     0 & -\Lambda^-(x)
  \end{pmatrix}
, & \text{ where } & \left\{
  \begin{aligned}
    \Lambda^+(x) &= \diag\{\lambda_1(x), \dotsc, \lambda_\ell(x)\}, \\
    \Lambda^-(x) &= \diag\{\lambda_{\ell+1}(x), \dotsc, \lambda_n(x)\}, \\
  \end{aligned}
\right. \\
\intertext{and the state \(R\) is decomposed as}
R & = \begin{pmatrix}
 R^+ \\
 R^-
 \end{pmatrix}, & \text{ where } & \left\{ \begin{aligned}
    R^+ &= (R_1, \dotsc, R_\ell)^T, \\
    R^- &= (R_{\ell+1}, \dotsc, R_n)^T.
  \end{aligned} \right.
\end{align} 

We now reduce the above hyperbolic system to a controlled difference delay system. For each $i \in \{1, \dotsc, n\}$, we define $\tau_i = \int_0^1 \frac{1}{\lambda_i(x)}\,dx$, which represents the travel time associated with the $i$-th characteristic. We set $\tau_{\max}=\max_{i\in\{1,\ldots,n\}}\tau_i$. Applying the method of characteristics, it is classical (see, for instance, \cite{CHITOUR2024_contr_hyp}) that System~\eqref{eq_hyp}--\eqref{eq_bord} can be reduced to the difference delay system
\begin{equation}
\label{syst_lin_avec_sortie_EDP}
\begin{pmatrix}
 R_1(t,0) \\
 \vdots\\
 R_\ell(t,0)\\
  R_{\ell+1}(t,1)\\
 \vdots\\
 R_n(t,1)
 \end{pmatrix}=K \begin{pmatrix}
 R_1(t-\tau_1,0) \\
 \vdots\\
 R_\ell(t-\tau_\ell,0)\\
  R_{\ell+1}(t-\tau_{\ell+1},1)\\
 \vdots\\
 R_n(t-\tau_n,1)
 \end{pmatrix}+Bu(t), \quad \text{ for a.e.\ }t \ge 0.
\end{equation}

System~\eqref{syst_lin_avec_sortie_EDP} constitutes a particular subclass of the general difference delay systems described by~\eqref{main_system}. However, as detailed in \cite{CHITOUR2024_contr_hyp}, in order to properly establish a correspondence between controllability properties of \eqref{eq_hyp}--\eqref{eq_bord} and those of \eqref{syst_lin_avec_sortie_EDP}, the latter cannot be considered in the full state space $L^r((-\tau_{\max}, 0), \mathbb{C}^n)$, but should rather be considered in the space \(\mathsf S^r\) defined by
\[
\mathsf{S}^r=\prod_{i=1}^{n} L^r((-\tau_i, 0), \mathbb{C}).
\]
Indeed, it follows from the method of characteristics that only the values of the \(i\)-th component of \eqref{syst_lin_avec_sortie_EDP} in the interval \((-\tau_i, 0)\) are needed in order to recover the \(i\)-th component \(R_i\) of \eqref{eq_hyp} in the space interval \((0, 1)\). It is worth mentioning that the controllability properties of \eqref{syst_lin_avec_sortie_EDP} in $L^r((-\tau_{\max}, 0), \mathbb{C}^n)$ and in $\mathsf{S}^r$ are not equivalent, as shown in \cite{CHITOUR2024_contr_hyp}. Therefore, the framework developed in the previous sections cannot be applied directly and must be suitably adapted.

To discuss the properties of System~\eqref{syst_lin_avec_sortie_EDP}, we rewrite it in the simpler form
\begin{equation}
\label{eq_generic_delay}
\begin{pmatrix}
 y_1(t) \\
 \vdots\\
 y_n(t)
 \end{pmatrix}=K \begin{pmatrix}
 y_1(t-\tau_1) \\
 \vdots\\
 y_n(t-\tau_n)
 \end{pmatrix}+Bu(t), \quad \text{ for a.e.\ $t \ge 0$},
\end{equation}
where \(y_i(t) = R_i(t, 0)\) for \(i \in \{1, \dotsc, \ell\}\) and \(y_i(t) = R_i(t, 1)\) for \(i \in \{\ell + 1, \dotsc, n\}\). For a solution $y$ of \eqref{eq_generic_delay} and \(t \geq 0\), we define the corresponding state $y_t$ of \eqref{eq_generic_delay} in \(\mathsf S^r\) as the element of $\mathsf{S}^r$ defined by $(y_t)_i(\theta)=y_i(t+\theta)$ for $\theta\in (-\tau_i, 0)$ and $i \in \{1, \dotsc, n\}$, and we equip $\mathsf{S}^r$ with the norm $\norm{\cdot}_{S^r}$ defined by
\[
\|y\|_{\mathsf{S}^r}
= 
\left(
\sum_{i=1}^n
\|y_i\|_{L^r((-\tau_i, 0), \mathbb C)}^r
\right)^{1/r}.
\]
With this choice of norm, the dual space of $\mathsf{S}^r$ is identified with $\mathsf{S}^{r'}$. We recall that, by \cite{CHITOUR2024_contr_hyp}, given an initial condition $y_0\in \mathsf{S}^r$ and a control input $u\in \mathsf{U}^r$, System~\eqref{eq_generic_delay} admits a unique solution taking values in $\mathsf{S}^r$. In addition, the method of characteristics yields an isomorphism between the state space $L^r((0,1),\mathbb{C}^n)$ of the hyperbolic system~\eqref{eq_hyp}--\eqref{eq_bord} and the space $\mathsf{S}^r$, mapping solutions of the hyperbolic system onto solutions of the associated delay system~\eqref{eq_generic_delay}. Consequently, exact controllability and, in the case $r=2$, exponential stabilization properties are equivalent for both systems. We refer to \cite[Propositions~3.4, 3.7, and 3.19]{CHITOUR2024_contr_hyp} for more details.

The $L^r$-exact controllability and the $L^2$ rapid stabilization problems for System~\eqref{eq_generic_delay}, arising from hyperbolic PDEs, are closely related to those of general difference delay systems~\eqref{main_system}. The only difference lies in the underlying state space in which the dynamics evolve. Accordingly, it is necessary to introduce the counterpart of the holomorphic function $H$ defined in \eqref{hqp} in the context of System~\eqref{eq_generic_delay}. We thus define\footnote{In \cite{CHITOUR2024_contr_hyp}, \(H_{\rm hyp}\) was defined instead by \(H_{\rm hyp}(p) = \diag(e^{p \tau_1},\dotsc,e^{p \tau_n}) - K\). It is easy to check that the exact controllability condition from \cite[Theorem~5.2, item 1]{CHITOUR2024_contr_hyp} using this function \(H_{\rm hyp}\) is equivalent to the exact controllability condition stated in this paper using \eqref{eq:def-H-hyp}.} \(H_{\rm hyp}\colon \mathbb C \to \mathbb C^{n \times n}\), by setting, for $p\in \mathbb{C}$,
\begin{equation}
\label{eq:def-H-hyp}
H_{\rm hyp}(p) = I_n- K \diag(e^{-p \tau_1},\dotsc,e^{-p \tau_n}).
\end{equation}

The counterpart of Proposition~\ref{prop:control-equivalence-time} was established for \eqref{eq_generic_delay} in \cite[Theorem~4.1]{CHITOUR2024_contr_hyp}, with the time \(d \Lambda_N\) in assertions \ref{item:contr-dLambdaN} and \ref{item:contr-geq-dLambdaN} of Proposition~\ref{prop:control-equivalence-time} replaced by the time \(\tau_\ast = \tau_1 + \dotsb + \tau_n\). Hence, in the sequel, we will say that \eqref{eq_generic_delay} is \emph{exactly controllable in \(\mathsf S^r\)} if it is exactly controllable in \(\mathsf S^r\) in time \(\tau_\ast\).

The counterpart of Proposition~\ref{prop_con} for \eqref{eq_generic_delay} was also established in~\cite{CHITOUR2024_contr_hyp}. More precisely, it is shown in~\cite{CHITOUR2024_contr_hyp} that System~\eqref{eq_generic_delay} is exactly controllable in \(\mathsf S^r\) if the following rank conditions hold:
\begin{enumerate}[1)]
\item \label{assumption1_wave} $\rank [M, B]=n$ for every $M \in \overline{H_{\rm hyp}(\mathbb{C})}$,
\item \label{assumption2_wave} $\rank [K, B]=n$.
\end{enumerate}
These two rank conditions are also necessary when $r=1$.

We now wish to prove that these two rank conditions are also necessary for \(r \in (1, +\infty)\) and that, in the case \(r = 2\), they imply rapid stabilization of \eqref{eq_hyp}--\eqref{eq_bord}.

To do so, we first describe the main modifications required in Section~\ref{sec:representation} in order to write \eqref{eq_generic_delay} under the abstract form \eqref{eq:abstract}. From now on, we assume that $r\in(1, +\infty)$. The essential difference with respect to the framework of Section~\ref{sec:representation} is that the components of the state are now defined on different intervals. Instead of \eqref{eq:Upsilon}, the semigroup \((\Upsilon_r^{\rm hyp}(t))_{t \geq 0}\) on \(\mathsf S^r\) corresponding to \eqref{eq_generic_delay} is now defined componentwise, for \(t \in [0, \min(\tau_1, \dotsc, \tau_n))\), \(y = (y_i)_{i \in \{1, \dotsc, n\}} \in \mathsf S^r\), \(i \in \{1, \dotsc, n\}\), and a.e.\ \(\theta \in (-\tau_i, 0)\) by
\[
(\Upsilon_r^{\rm hyp}(t) y)_i (\theta) = \begin{dcases*}
\sum_{j = 1}^n K_{i j} y_j(t + \theta - \tau_j) & if \(0 \leq t + \theta < \min(\tau_1, \dotsc, \tau_n)\), \\
y_i(t + \theta) & if \(t + \theta < 0\),
\end{dcases*}
\]
and, adapting the proof of Proposition~\ref{prop:generator-Fr}, one obtains that its generator is the operator \(F_r^{\rm hyp}\colon D(F_r^{\rm hyp}) \subset \mathsf S^r \to \mathsf S^r\) defined by
\begin{align*}
D(F_r^{\rm hyp}) & = \left\{y \in \prod_{i = 1}^n W^{1, r}((-\tau_i, 0), \mathbb C) \suchthat y(0) = K \begin{pmatrix}y_1(-\tau_1) \\ \vdots \\ y_n(-\tau_n)\end{pmatrix}\right\}, \displaybreak[0] \\
(F_r^{\rm hyp} y)_i & = y_i' \text{ on } (-\tau_i, 0).
\end{align*}
By the same integration-by-parts argument as in the proof of Proposition~\ref{prop:adjoint_F}, we obtain that the adjoint \((F_r^{\rm hyp})^\ast\colon D((F_r^{\rm hyp})^\ast) \subset \mathsf S^{r'} \to \mathsf S^{r'}\) of \(F_r^{\rm hyp}\) is the operator defined by
\begin{align*}
D\left((F^{\rm hyp}_r)^*\right) & = \Biggl\{y \in \prod_{i = 1}^n W^{1, r'}((-\tau_i, 0), \mathbb C) \mathop{}\Bigg\vert\mathop{} \\
& \hspace*{4em} y_i(-\tau_i) = \bigl(K^* y(0)\bigr)_i \text{ for } i \in \{1, \dotsc, n\}\Biggr\}, \\
\left((F^{\rm hyp}_r)^* y\right)_i & = -y_i' \text{ on } (-\tau_i, 0).
\end{align*}
The control operator $G_r^{\rm hyp}\in \mathcal L\left(\mathbb{C}^m, D\left((F_r^{\rm hyp})^*\right)'\right)$ is defined by the same formula \eqref{defG} as \(G_r\), replacing only \(D(F_r^*)'\) by \(D\left((F_r^{\rm hyp})^*\right)'\).

The proof of Theorem~\ref{th:iff_controllability} carries over to the present framework after replacing the spaces and operators by their hyperbolic counterparts. More precisely, the counterpart of Lemma~\ref{lem:key_approximate_eigenvector}
is obtained by setting
$w=H^{\mathrm{hyp}}(p)^*v$ and $z_i(\theta)=e^{-\bar p\theta}\bigl(v_i-\chi_i(\theta)w_i\bigr)$, 
where the fixed functions $\chi_i\in C^1([-\tau_i,0])$
satisfy $\chi_i(-\tau_i)=1$ and vanish near zero, so that
$z(0)=v$, $z_i(-\tau_i)=(K^*v)_i$, and the analogues of
\eqref{lepointcle}--\eqref{eq:lower-bound-z} follow by the same computations.
Moreover, setting $\tau_{\min}=\min_i\tau_i$, the condition
$\operatorname{rank}[K,B]=n$ gives, for $\Re p<0$
and $\|g\|_{\mathbb C^n}=1$,
\[
\|g^T H^{\mathrm{hyp}}(p)\|_{\mathbb C^n}
+\|g^TB\|_{\mathbb C^m}\ge
\|g^TK\|_{\mathbb C^n}
+\|g^TB\|_{\mathbb C^m}
-e^{\Re p\,\tau_{\min}}\ge \kappa-e^{\Re p\,\tau_{\min}},
\]
for some $\kappa>0$ independent of $p$ and $g$, yielding
the counterpart of \eqref{coer} for sufficiently negative
$\Re p$.

The proof of Theorem~\ref{main_result1} can be adapted in the same way. Indeed, the condition $\rank [K,B]=n$ allows one to choose a preliminary feedback such that $K$ is replaced by an invertible matrix $\widetilde K=K+BK_0$. For the corresponding homogeneous system,
$
(y_1(t-\tau_1)\cdots y_n(t-\tau_n))^T
=
\widetilde K^{-1}y(t)$,
which yields the group property by backward iteration. The remainder of
the proof is unchanged.
We summarize these results in the following theorem.

\begin{theorem}
\label{main_result1hypsysfirst}
Let \(r \in [1, +\infty)\). Then the hyperbolic system~\eqref{eq_hyp}--\eqref{eq_bord} is exactly controllable in $L^r((0, 1), \mathbb{C}^n)$ or, equivalently, the associated delay system~\eqref{eq_generic_delay} is exactly controllable in $\mathsf S^r$, if and only if the following conditions hold:
\begin{enumerate}[1)]
\item \label{assumption1} $\rank [M, B] = n$ for every $M \in \overline{H_{\rm hyp}(\mathbb{C})}$,
\item \label{assumption2} $\rank [K, B] = n$.
\end{enumerate}
Moreover, if these two rank conditions hold, then the system~\eqref{eq_hyp}--\eqref{eq_bord} is rapidly stabilizable in $L^2((0, 1), \mathbb{C}^n)$, or, equivalently, the associated delay system is rapidly stabilizable in $\mathsf S^2$.
\end{theorem}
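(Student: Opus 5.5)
The plan is to transfer the results of Sections~\ref{sec:controllability} and \ref{sec:stabilization} to the state space \(\mathsf S^r\) by following the sketch above.

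\textbf{Reduction and sufficiency.} By the isomorphism between \(L^r((0,1),\mathbb C^n)\) and \(\mathsf S^r\) given by the method of characteristics (\cite[Propositions~3.4, 3.7, and 3.19]{CHITOUR2024_contr_hyp}), it suffices to work with System~\eqref{eq_generic_delay} in \(\mathsf S^r\). Sufficiency of the two rank conditions for every \(r\in[1,+\infty)\) is already known from \cite{CHITOUR2024_contr_hyp}, as is necessity for \(r=1\). So the proof reduces to two tasks: necessity for \(r\in(1,+\infty)\), and rapid stabilization in \(\mathsf S^2\).

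\textbf{Necessity for \(r\in(1,+\infty)\).}
\begin{enumerate}[1)]
\item First make the same exponential shift as in Remark~\ref{rem:translation}: replace \(K\) by \(K\diag(e^{-\lambda\tau_1},\dots,e^{-\lambda\tau_n})\). This makes the uncontrolled system exponentially stable, leaves \(H_{\rm hyp}(\mathbb C)\) unchanged, and does not alter \(\rank[K,B]\).
\item Adapt Theorem~\ref{th_system_abstract} to \(\mathsf S^r\) using the Weiss representation argument. This identifies \(G_r^{\rm hyp}\) as an admissible control operator with adjoint \(y\mapsto B^*y(0)\).
\item Duality then gives the observability inequality \eqref{eq:observability-inequality} for \(((F^{\rm hyp}_r)^*,(G^{\rm hyp}_r)^*)\). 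Proposition~\ref{prop:Lr_Hautus} with \(q=r'\) and Proposition~\ref{prop:spectral-bound} then give the lower bounds \eqref{eq:Hautus_Yamamoto_explicit} and \eqref{eq:spectral-lower-bound} in \(\mathsf S^{r'}\).
\item For \(\rank[K,B]=n\), prove the analogue of \cite[Theorem~3.6]{chitour:hal-03827918} (approximate controllability forces it). Alternatively, derive it directly: test the Hautus estimate as \(\Re p\to-\infty\) with the approximate eigenvectors below.
\item The key step is the analogue of Lemma~\ref{lem:key_approximate_eigenvector}. Set \(w=H_{\rm hyp}(p)^*v\) and \(z_i(\theta)=e^{-\bar p\theta}(v_i-\chi_i(\theta)w_i)\), with \(\chi_i(-\tau_i)=1\) and \(\chi_i\) vanishing near \(0\). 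Then \(z(0)=v\), and
\[
z_i(-\tau_i)=e^{\bar p\tau_i}\bigl(v_i-v_i+(\diag(e^{-\bar p\tau_j})K^*v)_i\bigr)=(K^*v)_i,
\]
so \(z\in D((F^{\rm hyp}_r)^*)\). Moreover \((\bar p I-(F^{\rm hyp}_r)^*)z=(-\chi_i'e^{-\bar p\theta}w_i)_i\), whose \(\mathsf S^{r'}\) norm is at most \(Ce^{\tau_{\max}|\Re p|}\|H_{\rm hyp}(p)^*v\|\). A lower bound for \(\|z\|\) comes from a neighborhood of \(0\) where all \(\chi_i\) vanish.
\end{enumerate}

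With this in hand, split \(\mathbb C\) into the four strips \(\mathcal S_1,\dots,\mathcal S_4\) exactly as in the proof of Theorem~\ref{th:iff_controllability}, and prove the coercivity estimate \eqref{coer} for \(H_{\rm hyp}\) in each:
\begin{enumerate}[1)]
\item On \(\mathcal S_4\) (\(\Re p\) large), \(H_{\rm hyp}(p)\) is close to \(I_n\), so \eqref{coer} is immediate.
\item On \(\mathcal S_1\) (\(\Re p\) very negative), use the displayed estimate above with \(\kappa\) coming from \(\rank[K,B]=n\). One point needs checking: here \(g^TH_{\rm hyp}(p)\) is \(g^T\) minus \(g^TK\) with its columns multiplied by \(e^{-p\tau_j}\). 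I would write \(g^TK=(g^T-g^TH_{\rm hyp}(p))\diag(e^{p\tau_j})\), which gives \(\|g^TK\|\le e^{\Re p\,\tau_{\min}}(1+\|g^TH_{\rm hyp}(p)\|)\). This yields \eqref{coer} when \(\Re p\) is very negative.
\item On \(\mathcal S_2\) and \(\mathcal S_3\), combine the lemma analogue with the Hautus estimate and with the resolvent bound, respectively, as in the proof of Theorem~\ref{th:iff_controllability}.
\end{enumerate}
Finally, \cite[Proposition~3.12]{chitour:hal-03827918}, adapted to \(H_{\rm hyp}\), converts \eqref{coer} back into the rank condition on \(\overline{H_{\rm hyp}(\mathbb C)}\).

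\textbf{Stabilization.} Choose \(K_0\) with \(\widetilde K=K+BK_0\) invertible, using the same construction as in Theorem~\ref{main_result1}. The feedback \(u=v+K_0(y_1(t-\tau_1),\dots,y_n(t-\tau_n))^T\) preserves exact controllability in \(\mathsf S^2\). Invertibility of \(\widetilde K\) gives backward solvability,
\[
(y_i(t-\tau_i))_i=\widetilde K^{-1}y(t),
\]
so the semigroup extends to a \(C_0\)-group; this is (H1). (H2) holds by construction, (H3) follows from admissibility, and (H4) from exact controllability. Theorem~\ref{lemma_H} then gives rapid stabilization. The main obstacle I expect is verifying that the group is strongly continuous on \(\mathsf S^2\) with its unequal intervals; I would handle this by reflecting each component \(\theta\mapsto-\theta-\tau_i\) to reduce to a forward system of the same type.
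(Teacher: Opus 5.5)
Your proposal is correct and takes essentially the same route as the paper. The shared steps are the exponential shift, the approximate eigenvectors \(z_i(\theta)=e^{-\bar p\theta}(v_i-\chi_i(\theta)w_i)\) combined with the \(L^{r'}\)-Hautus and resolvent estimates on the four strips, and the use of \(\rank[K,B]=n\) for very negative \(\Re p\). Your factorization \(g^TK=(g^T-g^TH_{\rm hyp}(p))\diag(e^{p\tau_j})\) is a valid alternative derivation of the paper's displayed inequality. The stabilization part likewise matches: a preliminary feedback makes \(K+BK_0\) invertible, giving the \(C_0\)-group needed for Vest's theorem.

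One caveat concerns your parenthetical alternative for obtaining \(\rank[K,B]=n\) directly from the Hautus estimate. With your stated bound \(Ce^{\tau_{\max}|\Re p|}\|H_{\rm hyp}(p)^*v\|\), no contradiction arises as \(\Re p\to-\infty\): if \(K^*v=B^*v=0\), then \(H_{\rm hyp}(p)^*v=v\) and the bound blows up. You would instead need the sharper estimate \(\|(\bar p I-(F_r^{\rm hyp})^*)z\|_{\mathsf S^{r'}}\le Ce^{-\delta|\Re p|}\|v\|\). It holds for \(\Re p<0\) and \(K^*v=0\), because \(|e^{-\bar p\theta}|\le e^{-\delta|\Re p|}\) on the support of \(\chi_i'\). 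Your primary route through approximate controllability, which is what the paper implicitly uses, needs nothing extra.
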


\bibliographystyle{abbrv} 
\bibliography{bibliography}

\begin{thebibliography}{10}

\bibitem{auriol2019explicit}
J.~Auriol and F.~Di~Meglio.
\newblock An explicit mapping from linear first order hyperbolic {PDE}s to
  difference systems.
\newblock {\em Systems Control Lett.}, 123:144--150, 2019.

\bibitem{baratchart}
L.~Baratchart, S.~Fueyo, G.~Lebeau, and J.-B. Pomet.
\newblock Sufficient stability conditions for time-varying networks of
  telegrapher's equations or difference-delay equations.
\newblock {\em SIAM J. Math. Anal.}, 53(2):1831--1856, 2021.

\bibitem{bastin2016stability}
G.~Bastin and J.-M. Coron.
\newblock {\em Stability and boundary stabilization of 1-{D} hyperbolic
  systems}, volume~88 of {\em Progress in Nonlinear Differential Equations and
  their Applications}.
\newblock Birkh\"{a}user/Springer, [Cham], 2016.
\newblock Subseries in Control.

\bibitem{Brayton1966Bifurcation}
R.~K. Brayton.
\newblock Bifurcation of periodic solutions in a nonlinear
  difference-differential equations of neutral type.
\newblock {\em Quart. Appl. Math.}, 24(3):215--224, 1966.

\bibitem{chitour:hal-03827918}
Y.~Chitour, S.~Fueyo, G.~Mazanti, and M.~Sigalotti.
\newblock Hautus--{Y}amamoto criteria for approximate and exact controllability
  of linear difference delay equations.
\newblock {\em Discrete Contin. Dyn. Syst.}, 43(9):3306--3337, 2023.

\bibitem{CHITOUR2024_contr_hyp}
Y.~Chitour, S.~Fueyo, G.~Mazanti, and M.~Sigalotti.
\newblock Approximate and exact controllability criteria for linear
  one-dimensional hyperbolic systems.
\newblock {\em Systems Control Lett.}, 190:Paper No. 105834, 12, 2024.

\bibitem{Chitour2016Stability}
Y.~Chitour, G.~Mazanti, and M.~Sigalotti.
\newblock Stability of non-autonomous difference equations with applications to
  transport and wave propagation on networks.
\newblock {\em Netw. Heterog. Media}, 11(4):563--601, 2016.

\bibitem{Chitour2020Approximate}
Y.~Chitour, G.~Mazanti, and M.~Sigalotti.
\newblock Approximate and exact controllability of linear difference equations.
\newblock {\em J. \'{E}c. polytech. Math.}, 7:93--142, 2020.

\bibitem{Cooke1968Differential}
K.~L. Cooke and D.~W. Krumme.
\newblock Differential-difference equations and nonlinear initial-boundary
  value problems for linear hyperbolic partial differential equations.
\newblock {\em J. Math. Anal. Appl.}, 24:372--387, 1968.

\bibitem{coron}
J.-M. Coron.
\newblock {\em Control and nonlinearity}, volume 136 of {\em Mathematical
  Surveys and Monographs}.
\newblock American Mathematical Society, Providence, RI, 2007.

\bibitem{coron2017finite}
J.-M. Coron, L.~Hu, and G.~Olive.
\newblock Finite-time boundary stabilization of general linear hyperbolic
  balance laws via {F}redholm backstepping transformation.
\newblock {\em Automatica J. IFAC}, 84:95--100, 2017.

\bibitem{CoNg}
J.-M. Coron and H.-M. Nguyen.
\newblock Dissipative boundary conditions for nonlinear 1-{D} hyperbolic
  systems: sharp conditions through an approach via time-delay systems.
\newblock {\em SIAM J. Math. Anal.}, 47(3):2220--2240, 2015.

\bibitem{Coron2019Optimal}
J.-M. Coron and H.-M. Nguyen.
\newblock Optimal time for the controllability of linear hyperbolic systems in
  one-dimensional space.
\newblock {\em SIAM J. Control Optim.}, 57(2):1127--1156, 2019.

\bibitem{coron2020finite}
J.-M. Coron and H.-M. Nguyen.
\newblock Finite-time stabilization in optimal time of homogeneous quasilinear
  hyperbolic systems in one dimensional space.
\newblock {\em ESAIM Control Optim. Calc. Var.}, 26:Paper No. 119, 24, 2020.

\bibitem{Coron2021Null}
J.-M. Coron and H.-M. Nguyen.
\newblock Null-controllability of linear hyperbolic systems in one dimensional
  space.
\newblock {\em Systems Control Lett.}, 148:Paper No. 104851, 8, 2021.

\bibitem{CoronOptimal}
J.-M. Coron and H.-M. Nguyen.
\newblock On the optimal controllability time for linear hyperbolic systems
  with time-dependent coefficients.
\newblock {\em Ann. Inst. Fourier (Grenoble)}, 76(4):1635--1704, 2026.

\bibitem{Gantouh2020ApproximateCO}
Y.~El~Gantouh, S.~Hadd, and A.~Rhandi.
\newblock Approximate controllability of network systems.
\newblock {\em Evol. Equ. Control Theory}, 10(4):749--766, 2021.

\bibitem{Engel2008Vertex}
K.-J. Engel, M.~Kramar~Fijav\v{z}, R.~Nagel, and E.~Sikolya.
\newblock Vertex control of flows in networks.
\newblock {\em Netw. Heterog. Media}, 3(4):709--722, 2008.

\bibitem{Engel2001}
K.-J. Engel and R.~Nagel.
\newblock {\em One-parameter semigroups for linear evolution equations}, volume
  194 of {\em Graduate Texts in Mathematics}.
\newblock Springer-Verlag, New York, 2000.

\bibitem{fueyo:hal-04830757}
S.~Fueyo.
\newblock Hale--{S}ilkowski criterion for exact controllability of difference
  delay systems and 1-{D} hyperbolic {PDE}s.
\newblock {\em Internat. J. Control}, 99(5):1495--1507, 2026.

\bibitem{Corona_fueyo}
S.~Fueyo and Y.~Chitour.
\newblock A corona theorem for an algebra of {R}adon measures with an
  application to exact controllability for linear controlled delayed difference
  equations.
\newblock {\em C. R. Math. Acad. Sci. Paris}, 362:851--861, 2024.

\bibitem{Greenberg1984Effect}
J.~M. Greenberg and T.~T. Li.
\newblock The effect of boundary damping for the quasilinear wave equation.
\newblock {\em J. Differential Equations}, 52(1):66--75, 1984.

\bibitem{Hale1993Introduction}
J.~K. Hale and S.~M. Verduyn~Lunel.
\newblock {\em Introduction to functional-differential equations}, volume~99 of
  {\em Applied Mathematical Sciences}.
\newblock Springer-Verlag, New York, 1993.

\bibitem{hale2002stabilization}
J.~K. Hale and S.~M. Verduyn~Lunel.
\newblock Strong stabilization of neutral functional differential equations.
\newblock {\em IMA J. Math. Control Inform.}, 19(1-2):5--23, 2002.
\newblock Special issue on analysis and design of delay and propagation
  systems.

\bibitem{hautus1969controllability}
M.~L.~J. Hautus.
\newblock Controllability and observability conditions of linear autonomous
  systems.
\newblock {\em Indag. Math.}, 31:443--448, 1969.
\newblock Nederl. Akad. Wetensch. Proc. Ser. A 72.

\bibitem{Hu2024Minimal}
L.~Hu and G.~Olive.
\newblock The minimal control time for the exact controllability by internal
  controls of 1{D} linear hyperbolic balance laws.
\newblock {\em ESAIM Control Optim. Calc. Var.}, 30:Paper No. 82, 13, 2024.

\bibitem{Hu2025Method}
L.~Hu and G.~Olive.
\newblock A method to determine the minimal null control time of 1{D} linear
  hyperbolic balance laws.
\newblock {\em J. Differential Equations}, 440:Paper No. 113455, 26, 2025.

\bibitem{HuOliveSurvey}
L.~Hu and G.~Olive.
\newblock Control of {1D} first-order linear hyperbolic systems: a brief
  survey.
\newblock To appear in Chinese Annals of Mathematics, Series B, 2026.

\bibitem{JacobPartington2004}
B.~Jacob and J.~R. Partington.
\newblock Admissibility of control and observation operators for semigroups: a
  survey.
\newblock In {\em Current trends in operator theory and its applications},
  volume 149 of {\em Oper. Theory Adv. Appl.}, pages 199--221. Birkh\"auser,
  Basel, 2004.

\bibitem{Komornik}
V.~Komornik.
\newblock Rapid boundary stabilization of linear distributed systems.
\newblock {\em SIAM J. Control Optim.}, 35(5):1591--1613, 1997.

\bibitem{krstic2008backstepping}
M.~Krstic and A.~Smyshlyaev.
\newblock Backstepping boundary control for first-order hyperbolic {PDE}s and
  application to systems with actuator and sensor delays.
\newblock {\em Systems Control Lett.}, 57(9):750--758, 2008.

\bibitem{Mazanti2017Relative}
G.~Mazanti.
\newblock Relative controllability of linear difference equations.
\newblock {\em SIAM J. Control Optim.}, 55(5):3132--3153, 2017.

\bibitem{Pazy1983Semigroups}
A.~Pazy.
\newblock {\em Semigroups of linear operators and applications to partial
  differential equations}, volume~44 of {\em Applied Mathematical Sciences}.
\newblock Springer-Verlag, New York, 1983.

\bibitem{Russell1978Controllability}
D.~L. Russell.
\newblock Controllability and stabilizability theory for linear partial
  differential equations: recent progress and open questions.
\newblock {\em SIAM Rev.}, 20(4):639--739, 1978.

\bibitem{Russell_Weiss94}
D.~L. Russell and G.~Weiss.
\newblock A general necessary condition for exact observability.
\newblock {\em SIAM J. Control Optim.}, 32(1):1--23, 1994.

\bibitem{salamon1984control}
D.~Salamon.
\newblock {\em Control and observation of neutral systems}, volume~91 of {\em
  Research Notes in Mathematics}.
\newblock Pitman (Advanced Publishing Program), Boston, MA, 1984.

\bibitem{Slemrod1971Nonexistence}
M.~Slemrod.
\newblock Nonexistence of oscillations in a nonlinear distributed network.
\newblock {\em J. Math. Anal. Appl.}, 36:22--40, 1971.

\bibitem{Staffans2005}
O.~Staffans.
\newblock {\em Well-posed linear systems}, volume 103 of {\em Encyclopedia of
  Mathematics and its Applications}.
\newblock Cambridge University Press, Cambridge, 2005.

\bibitem{Suarez}
A.~Su{\'a}rez.
\newblock {\em Analysis and Design of Autonomous Microwave Circuits}.
\newblock John Wiley \& Sons, 2009.

\bibitem{Tucsnak_Weiss}
M.~Tucsnak and G.~Weiss.
\newblock {\em Observation and control for operator semigroups}.
\newblock Birkh\"auser Advanced Texts: Basler Lehrb\"ucher. Birkh\"auser
  Verlag, Basel, 2009.

\bibitem{Urquiza2005Rapid}
J.~M. Urquiza.
\newblock Rapid exponential feedback stabilization with unbounded control
  operators.
\newblock {\em SIAM J. Control Optim.}, 43(6):2233--2244, 2005.

\bibitem{vazquez2011backstepping}
R.~Vazquez, M.~Krstic, and J.-M. Coron.
\newblock Backstepping boundary stabilization and state estimation of a \(2
  \times 2\) linear hyperbolic system.
\newblock In {\em 2011 50th IEEE Conference on Decision and Control and
  European Control Conference}, pages 4937--4942, 2011.

\bibitem{Vest2013Rapid}
A.~Vest.
\newblock Rapid stabilization in a semigroup framework.
\newblock {\em SIAM J. Control Optim.}, 51(5):4169--4188, 2013.

\bibitem{Weck1982Remark}
N.~Weck.
\newblock A remark on controllability for symmetric hyperbolic systems in one
  space dimension.
\newblock {\em SIAM J. Control Optim.}, 20(1):1--8, 1982.

\bibitem{Weiss1989Control}
G.~Weiss.
\newblock Admissibility of unbounded control operators.
\newblock {\em SIAM J. Control Optim.}, 27(3):527--545, 1989.

\bibitem{Weiss1989Observation}
G.~Weiss.
\newblock Admissible observation operators for linear semigroups.
\newblock {\em Israel J. Math.}, 65(1):17--43, 1989.

\bibitem{yamamoto1981realization}
Y.~Yamamoto.
\newblock Realization theory of infinite-dimensional linear systems. {P}arts
  {I} and {II}.
\newblock {\em Math. Systems Theory}, 15:55--77 and 169--190, 1981/82.

\bibitem{YamamotoRealization}
Y.~Yamamoto.
\newblock Pseudo-rational input/output maps and their realizations: a
  fractional representation approach to infinite-dimensional systems.
\newblock {\em SIAM J. Control Optim.}, 26(6):1415--1430, 1988.

\bibitem{yamamoto1989reachability}
Y.~Yamamoto.
\newblock Reachability of a class of infinite-dimensional linear systems: an
  external approach with applications to general neutral systems.
\newblock {\em SIAM J. Control Optim.}, 27(1):217--234, 1989.

\bibitem{Yamamoto_Willems}
Y.~Yamamoto and J.~C. Willems.
\newblock Behavioral controllability and coprimeness for a class of
  infinite-dimensional systems.
\newblock In {\em 2008 47th IEEE Conference on Decision and Control}, pages
  1513--1518, 2008.

\end{thebibliography}

\end{document}